\newtheorem{theorem}{Theorem}[section]
\newtheorem{proposition}[theorem]{Proposition}
\newtheorem{corollary}[theorem]{Corollary}
\newtheorem{lemma}[theorem]{Lemma}
\theoremstyle{definition}
\newtheorem{definition}[theorem]{Definition}
\newtheorem{example}[theorem]{Example}
\theoremstyle{remark}
\newtheorem{remark}[theorem]{Remark}
\numberwithin{equation}{section}
\newcommand{\F}{\ensuremath{\mathcal{F}}}
\newcommand{\RR}{\mathbb R}
\newcommand{\N}{\mathbb N}
\newcommand{\metric}{\ensuremath{ \mathtt{g} }}
\newcommand{\dualbeta}{\ensuremath{\vec{\beta}} }
\newcommand{\metriczermelo}{\ensuremath{ \mathtt{h}   } }
\newcommand{\Om}{{\mathcal O}}
\begin{document}


\title[On  singular Finsler foliation]{On singular Finsler foliation}

\author[M.M. Alexandrino]{Marcos M. Alexandrino}

\author[B. Alves]{Benigno O. Alves}

\author[M. A. Javaloyes]{Miguel Angel Javaloyes }


\address{ Marcos M. Alexandrino, \,   
Instituto de Matem\'{a}tica e Estat\'{\i}stica\\
Universidade de S\~{a}o Paulo \\
 Rua do Mat\~{a}o 1010, 05508 090 S\~{a}o Paulo, Brazil}
\email{marcosmalex@yahoo.de, malex@ime.usp.br}

\address{Benigno O. Alves, \, 
Instituto de Matem\'{a}tica e Estat\'{\i}stica\\
Universidade de S\~{a}o Paulo \\
 Rua do Mat\~{a}o 1010, 05508 090 S\~{a}o Paulo, Brazil}
\email{gguialves@hotmail.com,  benigno@ime.usp.br}

\address{Miguel \'Angel Javaloyes, \,
Departamento de Matem\'aticas, 
Universidad de Murcia \\
Campus de Espinardo,
30100 Espinardo, Murcia, Spain}
\email{majava@um.es}

\thanks{The first author was   supported by  Funda\c{c}\~{a}o de Amparo a Pesquisa do Estado de S\~{a}o Paulo-FAPESP 
(Tematicos:-2011/21362-2 and 2016/23746-6). 
The second author was supported by CNPq (PhD fellowship) and partially
supported by  PDSE-Capes (PhD sandwich program). The third author was partially supported  by
 the programme Young leaders in research 18942/JLI/13 by Fundaci\'on S\'eneca, Regional Agency for Science and Technology from the Region of Murcia and Spanish  MINECO/FEDER project reference
MTM2015-65430-P}





\keywords{Finsler foliations, Randers space, Finsler submersion}

\begin{abstract}

In this paper we introduce the concept of singular Finsler foliation, which generalizes the concepts of Finsler actions, Finsler submersions and (regular) Finsler  foliations. We show that  if
 $\F$ is a singular Finsler foliation with closed leaves  on a  Randers manifold $(M,Z)$ 
with Zermelo data $(\metriczermelo,W),$ then  $\F$ is a singular Riemannian foliation on the Riemannian manifold $(M,\metriczermelo)$. 
As a direct consequence we infer that the regular leaves are equifocal submanifolds (a generalization of isoparametric submanifolds)
 when the wind $W$  is an 
 infinitesimal homothety of $\metriczermelo$ (e.,g when $W$ is killing vector field  or $M$ has constant Finsler curvature).

We also present a slice theorem  that locally relates  singular Finsler foliations on  Finsler manifolds 
with singular Finsler foliations on Minkowski spaces.

\end{abstract}


\maketitle

\section{Introduction}

Let  $(M,F)$ be a  Finsler manifold; recall definition in Section \ref{section-preliminaries}. A partition $\F=\{L\}$ of $M$ into connected immersed smooth  submanifolds (the \emph{leaves}) is called \emph{a singular Finsler foliation} if it satisfies the following two conditions:
\begin{enumerate}
	\item[(a)]  $\mathcal{F}$ is a \emph{singular foliation}, i.e, for each  $v\in T_pL_p$ there exists a smooth vector field $X$ tangent to the leaves so that $X(p)=v$;  
	\item[(b)]  $\mathcal{F}$ is Finsler, i.e., if a geodesic  $\gamma:(a,b)\rightarrow M$, with $0\in (a,b)\in\RR$,  is orthogonal to the leaf  $L_{\gamma(0)}$, (i.e., $g_{\dot{\gamma}(0)}(\dot{\gamma}(0),v)=0$ for each $v\in T_{\gamma(0)}L$), then $\gamma$  is \emph{horizontal}, i.e., orthogonal to each leaf it meets. Here, $g_{\dot{\gamma}}$ denotes the \emph{fundamental tensor} associated with the Finsler metric $F$; see Section \ref{section-finsler-metrics}.
\end{enumerate}
As we will see in Lemma  \ref{lemma-equidistant},  part (b) above is equivalent to saying that the leaves are \emph{locally equidistant},  
where the distance between  the plaque $P_x$ and  the plaque $P_y$ does not need to be the same as the distance between the plaque $P_{y}$
and $P_{x}$. 

A typical example of a singular Finsler foliation is the partition of $M$ into orbits of  a Finsler action. Recall that 
an action $\mu: G\times M\to M$ on a  Finsler manifold $(M,F)$ is called a \emph{Finsler action} if 
$F\circ d\mu^{g}(\cdot)=F(\cdot)$ for every $g\in G$, i.e., if the action preserves the Finsler metric. 
For example, an isometric action $G\times M\to M$ on a Riemannian manifold  $(M,\metriczermelo)$ is also a Finsler action for the Zermelo metric with inicial data $(\metriczermelo,W)$, where $W$  is a $G$-invariant vector field, recall Lemma \ref{lemma-isometryRanders}. 
For more examples and results on Finsler actions, see  \cite{Deng-Hu} and \cite{Deng-book}.

Another class of examples is given by (regular) Finsler foliations. In particular the  foliations given by   the fibers  of  Finsler submersions;  
see \cite{Duran} and \cite[section 2.4]{Deng-Hu}.
Recall that a submersion  $\pi:(M,F^1)\rightarrow (B,F^2)$  between Finsler manifolds is a  \emph{Finsler submersion} if
for all $p\in M$ we have 
$d\pi_p(B_p(0,1))=B_{\pi(p)}(0,1),$ where $B_p(0,1)$ and $B_{\pi(p)}(0,1)$ 
are the unit balls of  $(T_pM,F^1_p)$ and $(T_{\pi(p)}B,F^2_{\pi (p)})$, respectively; see also Corollary 
\ref{equivalencedef}.

Recently Radeschi \cite{RadeschiClifford}   constructed infinitely many examples of non-homogeneous  singular Riemannian foliations on spheres using polynomial maps. More precisely he generalized the construction of Ferus,  Karcher and M\"{u}nzner
\cite {FerusKarcherMunzner} constructing Clifford foliations $\F_{C}$ on spheres
with metric $\metriczermelo_{1}$ (the round metric of constant sectional curvature equal to 1).
It is not difficult to produce 
Randers metrics        $Z$      with Zermelo data  $(\metriczermelo_{1},W)$ 
so that the foliations $\F_C$ turn to be non-homogeneous   singular Finsler foliations with respect to        $Z$,      
see  Example \ref{remark-clifford-foliation}.  More generally, as briefly explained in Remark \ref{remark-examples-singular-finsler-foliation}, 
if we start with a singular Riemannian foliation $\F$ on $(M, \metriczermelo )$ and an $\F$-\emph{foliated vector field} $W$
(i.e.,  its flow takes leaves to leaves), the foliation $\F$ turns to be a singular Finsler foliation on the Randers space $(M,       Z    )$, with Zermelo data  $(\metriczermelo, W)$.

 This leads us to the following question:   \emph{may one find a Riemannian metric $\metric$ for each  singular Finsler foliation $\F$ on $(M,F)$ so that $\F$ turns to be  a singular Riemannian foliation on $(M,\metric)$?} For regular foliations, this question has already been answered positively,  see  
\cite{Jozefowicz-Wolak,Miernowski-Mozgawa,Popescu-Popescu}. 
Very roughly speaking the idea in the regular case was to locally describe  the regular foliations by  submersions 
and to produce the smooth metric by averaging  the fundamental tensors on the  base spaces  of these submersions. In the singular
case one should be quite more careful to produce a smooth metric, because the local description of singular foliations  is not so
trivial as in the regular case.    Our first main result gives a positive
answer to the above question in the case of Randers spaces.

\begin{theorem}
\label{theorem-Randersfoliation}
Let $\F=\{L\}$ be a singular Finsler foliation with closed leaves  on a  Randers manifold $(M,Z)$ with Zermelo data $(\metriczermelo,W)$. Then  $\F$ is a singular Riemannian foliation on the Riemannian manifold $(M,\metriczermelo)$. In addition the wind $W$ is 
an $\F$-foliated vector field. 

\end{theorem}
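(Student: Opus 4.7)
The plan is to exploit the Zermelo description of $Z$-orthogonality. Because the unit $Z$-ball at $p$ is the unit $\metriczermelo$-ball centered at $W(p)$, the $Z$-sphere of radius $r=Z(v)$ is the $\metriczermelo$-sphere of radius $r$ centered at $rW(p)$. Taking tangent hyperplanes at $v$ yields the key identity
\[
v\perp_{Z} V \iff v-Z(v)W(p)\perp_{\metriczermelo} V
\]
for every subspace $V\subset T_pM$. In particular, the $Z$-unit horizontal vectors to a leaf $L_p$ at $p$ are exactly $v=W(p)+n$ with $n$ a unit $\metriczermelo$-normal vector to $T_pL_p$.

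By condition (b), a $Z$-unit-speed horizontal geodesic $\gamma$ then has the form $\dot\gamma(t)=W(\gamma(t))+n(t)$, with $n(t)\in(T_{\gamma(t)}L_{\gamma(t)})^{\perp_{\metriczermelo}}$ of unit $\metriczermelo$-norm. Varying the initial unit normal $n_0$ produces a smooth family of horizontal $Z$-geodesics emanating from $p$, all sharing the same ``wind part'' $W$ and differing only in the normal part.

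The central step, and the main obstacle, is to show that $W$ is $\F$-foliated, i.e.\ $[W,Y]$ is tangent to $\F$ for every vector field $Y$ tangent to $\F$. Since both $W(p)+n_0$ and $W(p)-n_0$ generate horizontal $Z$-geodesics for every unit normal $n_0$, the center $W(p)$ of the affine sphere of horizontal vectors is intrinsic to the pair $(\F,\metriczermelo)$. By transporting this affine structure along the family of horizontal $Z$-geodesics and exploiting smooth dependence on $n_0$, I aim to show that the flow of $W$ permutes plaques. I expect the technical difficulty to lie in comparing the Chern connection of $Z$ and the Levi-Civita connection $\nabla^{\metriczermelo}$ (which differ by terms involving $\nabla^{\metriczermelo}W$) and in treating the singular strata where $\dim L_p$ can jump.

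Finally, assuming $W$ is $\F$-foliated, given an $\metriczermelo$-geodesic $\sigma$ that is $\metriczermelo$-orthogonal to $L_{\sigma(0)}$, the vector $v:=W(\sigma(0))+\dot\sigma(0)$ is $Z$-horizontal by the first step. Comparing the $Z$-geodesic it generates with $\sigma$ via the Zermelo correspondence (``subtract the wind'') and using that $W$ preserves $\F$ (so that the wind correction stays compatible with the tangent distribution of $\F$), I conclude that $\dot\sigma(t)$ remains $\metriczermelo$-orthogonal to every leaf it meets, which yields the singular Riemannian foliation property for $(M,\metriczermelo)$.
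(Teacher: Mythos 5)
Your opening observation is correct and matches the paper: by Lemma \ref{lemma-product-gvvu}, $g_v(v,u)$ is a positive multiple of $\metriczermelo(v-Z(v)W,u)$, so a $Z$-unit vector is $Z$-orthogonal to $T_pL_p$ exactly when it has the form $W(p)+n$ with $n$ a $\metriczermelo$-unit $\metriczermelo$-normal. But from there the proposal has a genuine gap at exactly the point you flag as ``the central step'': you never actually prove that $W$ is $\F$-foliated, nor the prior (and harder) fact that $W$ is tangent to the singular strata. Saying that $W(p)$ is the center of the affine sphere of unit horizontal vectors makes $W$ intrinsic to $(\F,Z,\metriczermelo)$ pointwise, but gives no mechanism for the flow of $W$ to permute plaques, and the real difficulty is concentrated at points where $\dim L_p$ jumps. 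The paper needs the full slice reduction (Propositions \ref{proposition-metricFS} and \ref{proposition-sliceflat-metric}) to pass to a Randers--Minkowski space, and there a linear argument (Lemmas \ref{lemma1-RandersFoliation-minkowskiSpace} and \ref{lemma6-RandersFoliation-minkowskiSpace}, using that \emph{both} $t\mapsto tv$ and $t\mapsto(1-t)v$ are horizontal, together with strong convexity of the indicatrix and the homothetic transformation lemma) to show the wind is tangent to the minimal stratum; this is then pulled back via Proposition \ref{proposition-translation-of-submersion} to give Proposition \ref{proposition-wind-tangent}. None of this is replaced by an argument in your sketch.

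The closing step is also flawed even granting foliatedness of $W$: you invoke a ``subtract the wind'' correspondence between $Z$-geodesics and $\metriczermelo$-geodesics, i.e.\ essentially Lemma \ref{c.e.geodesicas}, but that correspondence (composing $\metriczermelo$-geodesics with the flow of $W$) is only valid when $W$ is an infinitesimal homothety of $\metriczermelo$; this is why it appears in the paper only in the proof of Corollary \ref{the-corollary}, where that hypothesis is assumed. Theorem \ref{theorem-Randersfoliation} makes no such assumption, and for a general wind there is no pointwise relation between $Z$-geodesics and $\metriczermelo$-geodesics, so ``$\dot\sigma(t)$ remains $\metriczermelo$-orthogonal to every leaf it meets'' cannot be deduced this way. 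The paper avoids geodesic correspondences altogether: on each stratum the (regular) Finsler foliation is locally a Finsler submersion (Corollary \ref{equivalencedef}), a Randers Finsler submersion is a translation of a Riemannian submersion (Proposition \ref{proposition-translation-of-submersion}), hence $\F$ restricted to each stratum is Riemannian for $\metriczermelo$, and one then concludes with the known result that a singular foliation which is Riemannian on every stratum is a singular Riemannian foliation (\cite[Prop.~2.15]{Alex6}, cf.\ Proposition \ref{proposition-foliation-submersion}). To repair your argument you would need both a concrete proof of the tangency of $W$ to the strata (the slice/Minkowski analysis) and a replacement for the invalid geodesic correspondence, e.g.\ the submersion-translation route above.
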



We say that the wind  $W$ of a Randers space  $(M,Z)$ with Zermelo data $(\metriczermelo,W)$, 
 is \emph{an infinitesimal homothety of} $\metriczermelo$ if 
 $\mathcal{L}_{W} \metriczermelo=-\sigma \metriczermelo$, where $\mathcal{L}_W$ is the Lie derivative in the direction of $W$ and $\sigma$
is a constant. In this particular, but important  class of Randers space one knows relations between geodesics of $\metriczermelo$ and the geodesics of $Z$, see e.g. \cite{Robles-geodesics-constant-curvature}. 
This already suggests that the above theorem could be useful to infer a  property (namely equifocality) whose Riemannian counterpart  has been playing a fundamental role  in the theory of Singular Riemannian Foliations. 

Given a singular foliation $\F$ on a complete Finsler space $M$, we say that a regular leaf $L$ of $\F$ is 
an \emph{equifocal submanifold}, if for each $p\in L$, there exists a 
neighborhood $U\subset L$ of $p$ so that for each basic vector field $\xi$ along $U$ the 
 \emph{(future) endpoint map} $\eta_{\xi}:U\to M$, defined as $\eta_{\xi}(x)=\exp_{x}(\xi)$, 
has derivative with constant rank. In addition $\eta_{\xi}(U)\subset L_{q}$ where $q=\eta_{\xi}(p)$. 

The concept of equifocal submanifold was introduced by Terng and Thorbergsson \cite{Terng-Thorbergsson-equifocality} to generalize the concept of isoparametric submanifolds. As proved in \cite{AlexToeben2} each regular leaf of a singular Riemannian foliation on a complete Riemannian manifold  is equifocal. 
The equifocality   has been used in the study of topological properties (see e.g. \cite{Alexandrino-isoparametricsmaps, Alex-fundamentalgroup}), metric properties (see e.g. \cite{Alex6, AlexRadeschi-isometric-flow}) 
and semi-local dynamical behavior (see e.g. \cite{AlexRadeschi-Molinos-conjecture})  of singular Riemannian foliations.   
It also plays a relevant role in the  Wilking's proof of the  smoothness of Sharafutdinov projection, i.e., the metric projection  into the soul of an  open non negative curved space; see \cite{Gromoll-Wallschap-book, Wilking=duality}.

\begin{corollary}
\label{the-corollary}
Assume that  the wind  $W$ of a  Randers space  $(M,Z)$ with Zermelo data $(\metriczermelo,W)$ 
 is an infinitesimal homothety of $\metriczermelo$. Also assume that $\metriczermelo$ and $W$ are complete. 
Let $\F$ be a singular Finsler foliation with closed leaves  on  $(M,Z)$. Then
the regular leaves of $\F$ are equifocal. 
\end{corollary}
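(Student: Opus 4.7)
The plan is to reduce the corollary to the Riemannian equifocality theorem of \cite{AlexToeben2} via Theorem~\ref{theorem-Randersfoliation}, and then transfer the Riemannian conclusion back to the Randers setting using the explicit geodesic dictionary available when $W$ is an infinitesimal homothety.

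By Theorem~\ref{theorem-Randersfoliation}, $\F$ is a singular Riemannian foliation on the complete Riemannian manifold $(M,\metriczermelo)$ and $W$ is $\F$-foliated. The Alexandrino--T\"oben theorem \cite{AlexToeben2} then gives that each regular leaf $L$ is equifocal in the Riemannian sense: for every $p\in L$ and every $\metriczermelo$-basic normal vector field $\bar\xi$ on a sufficiently small neighborhood $U\subset L$ of $p$, the map $\bar\eta_{\bar\xi}(x)=\exp^{\metriczermelo}_x(\bar\xi(x))$ has constant rank on $U$ and $\bar\eta_{\bar\xi}(U)$ is contained in a single leaf. Since $\mathcal{L}_W\metriczermelo=-\sigma\metriczermelo$ with $\metriczermelo$ and $W$ complete, the flow $\phi_t$ of $W$ is a globally defined family of diffeomorphisms acting conformally on $\metriczermelo$ (isometrically when $\sigma=0$); because $W$ is $\F$-foliated, each $\phi_t$ permutes the leaves of $\F$.

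The bridge between the two metrics has two pieces. Infinitesimally, the Zermelo description of the $Z$-unit sphere at $x$ as the $\metriczermelo$-unit sphere centered at $W(x)$ yields that a vector $v\in T_xM$ with $Z(v)=c$ is $Z$-orthogonal to $T_xL$ if and only if $\bar v:=v-cW(x)$ is $\metriczermelo$-orthogonal to $T_xL$, and in that case $|\bar v|_{\metriczermelo}=c$. Globally, the classical description of Zermelo/Randers geodesics under an infinitesimal homothetic wind (see \cite{Robles-geodesics-constant-curvature}) gives an identity
\[
\exp^Z_x(v)=\phi_{c}\bigl(\exp^{\metriczermelo}_x(v-cW(x))\bigr),\qquad c=Z(v),
\]
up to a scalar reparametrization of the flow time when $\sigma\ne 0$. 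Given a $Z$-basic normal vector field $\xi$ on $U\subset L$ (so $Z(\xi)$ is constant, say $c$), define $\bar\xi:=\xi-cW|_U$. By the infinitesimal dictionary $\bar\xi$ is $\metriczermelo$-normal to $L$ of constant $\metriczermelo$-norm $c$, and (using that $W$ is $\F$-foliated) the correspondence $\xi\leftrightarrow\bar\xi$ carries $Z$-basic fields to $\metriczermelo$-basic fields. The geodesic identity then factors the Finsler endpoint map as $\eta_\xi=\phi_c\circ\bar\eta_{\bar\xi}$; since $\phi_c$ is a leaf-preserving diffeomorphism, both the constant-rank property and the image-in-a-single-leaf property transfer from $\bar\eta_{\bar\xi}$ to $\eta_\xi$, yielding equifocality of $L$ in the Finsler sense.

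The main obstacle is the compatibility claim that $\xi\leftrightarrow\bar\xi$ identifies $Z$-basic and $\metriczermelo$-basic normal vector fields along $L$: pointwise orthogonality and constancy of the norm are immediate, but basicity is a holonomy-invariance condition along the leaf, and verifying it requires comparing the $Z$- and $\metriczermelo$-normal parallel transports and using crucially that the flow of $W$ intertwines the two holonomy representations on the normal bundle. The case $\sigma\neq 0$ additionally requires tracking the scalar reparametrization in the geodesic identity, but introduces no new conceptual difficulty.
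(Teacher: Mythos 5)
Your proposal is correct and takes essentially the same route as the paper: Theorem~\ref{theorem-Randersfoliation} plus \cite{AlexToeben2} give Riemannian equifocality, and the Zermelo geodesic correspondence under an infinitesimal homothety (Lemma~\ref{c.e.geodesicas}) factors $\eta_{t\xi}=\varphi_{t}\circ\eta_{s\tilde\xi}$ with $\tilde\xi=\xi-W$, so the flow of the foliated wind transfers constant rank and image-in-a-leaf. The ``obstacle'' you flag is handled in the paper in one line --- basic means orthogonal plus projectable, and $\xi-W$ is projectable precisely because $W$ is $\F$-foliated, no comparison of normal holonomies being needed --- and note that when $\sigma\neq 0$ it is the Riemannian geodesic parameter $s=\tfrac{-2}{\sigma}\bigl(e^{-\sigma t/2}-1\bigr)$, not the flow time $t$, that is reparametrized.
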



In order to prove Theorem \ref{theorem-Randersfoliation}  we present in Propositions \ref{proposition-metricFS} and \ref{proposition-sliceflat-metric} a \emph{slice reduction} that  locally  relates  singular Finsler foliations on Finsler manifolds  with  singular Finsler foliations on  Minkowski spaces.  More precisely we prove the next result:

\begin{theorem}[Slice theorem]\label{thm:slice}
Let $\F=\{L\}$ be a singular Finsler foliation with closed leaves on a  Finsler manifold $(M,F)$. 
Then, given a point $q\in M$,  
there exists a slice $S_q$ transversal to $L_q$ (i.e. $T_{q}M= T_{q}S_{q}\oplus T_{q}L_{q}$) 
and a  Finsler metric $\widehat F$ on 
$S_q$  with the following properties:
\begin{enumerate}
\item[(a)] the leaves of the slice foliation $\F_{q}=\F\cap S_{q}$  endow $(S_q,\widehat F)$ of a structure of 
singular Finsler foliation;   
\item[(b)]   the distance between the leaves of $\F_{q}$ 
(with respect to $\widehat F$)  and the distance between the leaves of $\F$ (with respect to $F$) coincide  locally; 
\item[(c)] the slice foliation  $\F_{q}$ on $(S_q,\widehat F)$
 is foliated-diffeomorphic to a  singular Finsler foliation on        an open subset of       Minkowski space.
\end{enumerate}
\end{theorem}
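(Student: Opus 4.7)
The plan is to first produce a transversal slice $S_q$ through $q$ by applying the Finsler exponential at $q$ to a linear complement $V_q$ of $T_qL_q$ inside $T_qM$, then to equip $S_q$ with a Finsler metric $\widehat F$ which realizes, under the identification $S_q\simeq V_q$ via the exponential, the constant Minkowski norm $F_q$. Properties (a)--(c) are then checked in that order.

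First I would fix a linear complement $V_q\subset T_qM$ of $T_qL_q$ and, for $\varepsilon>0$ small, set $U=\{v\in V_q:F_q(v)<\varepsilon\}$ and $S_q=\exp_q(U)$, which is an embedded submanifold of $M$ through $q$ transversal to $L_q$. Shrinking $\varepsilon$ if necessary, each nearby leaf of $\F$ meets $S_q$ in a single plaque, so the slice partition $\F_q=\F\cap S_q$ is well defined. I would then define $\widehat F$ on $S_q$ to be the pullback via $\exp_q:U\to S_q$ of the translation-invariant Minkowski metric on $V_q$ with fiber norm $F_q|_{V_q}$. By construction $(S_q,\widehat F)$ is isometric to the open subset $U$ of the Minkowski space $(V_q,F_q|_{V_q})$, which gives part (c) as soon as $\F_q$ is known to be a singular Finsler foliation.

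For property (b), I would use that radial geodesics $t\mapsto \exp_q(tv)$ with $v\in V_q$ horizontal (i.e. $g_v(v,T_qL_q)=0$) are horizontal in $(M,F)$, and by the Finsler condition on $\F$ they stay orthogonal to every leaf they meet; their $F$-length on $[0,1]$ equals $F_q(v)$, which is exactly the $\widehat F$-length of the straight segment $t\mapsto tv$ in $V_q$. Since the local $F$-distance between $L_q$ and a plaque $P_x\subset S_q$ is realized by such a radial horizontal geodesic (by the equidistance Lemma~\ref{lemma-equidistant}), this $F$-distance coincides with the $\widehat F$-distance between $\{q\}$ and $P_x$; a translation argument along $V_q$ using the flat parallelism of Minkowski space then extends the identification to arbitrary nearby pairs of plaques. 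Property (a) follows next: the singular-foliation part is inherited from $\F$ by pulling back smooth leafwise vector fields through $\exp_q$, and the Finsler condition for $\F_q$ on $(S_q,\widehat F)$ is equivalent, by Lemma~\ref{lemma-equidistant} again, to the local $\widehat F$-equidistance of its plaques, which is exactly what was established in (b).

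The main obstacle, and presumably the reason the paper splits the statement across the two intermediate propositions~\ref{proposition-metricFS} and~\ref{proposition-sliceflat-metric}, is the transfer of the equidistance property from $(M,F)$ to $(S_q,\widehat F)$: the Finsler normal set of $L_q$ at $q$ is not a linear subspace of $T_qM$, so the choice of the complement $V_q$ and the control of minimizing $F$-geodesics between nearby plaques (which a priori need not lie in $S_q$) must be handled carefully to guarantee that distances agree without distortion and that the pullback Minkowski metric on $S_q$ is genuinely compatible with the foliation geometry inherited from $(M,F)$.
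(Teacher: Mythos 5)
There is a genuine gap, and it sits exactly at the two points you flag at the end but do not resolve. You take $\widehat F$ to be the pull-back via $\exp_q$ of the constant Minkowski norm $F_q|_{V_q}$ and then claim (b) for this flat metric. That claim is unsubstantiated and in general false as argued: the $F$-distance between two nearby plaques $P_x,P_y\subset S_q$ is realized by horizontal $F$-geodesics that start at arbitrary points of the plaques, need not be radial from $q$, need not stay in $S_q$, and see the metric $F$ at points away from $q$, which differs from $F_q$; moreover the initial velocities of minimizing geodesics lie in the orthogonal \emph{cone} $\nu(L)$, which is not contained in the linear complement $V_q$, so even the distance from $L_q$ to a point $x\in S_q$ is not $F_q(\exp_q^{-1}(x))$. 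The ``translation argument along $V_q$ using flat parallelism'' does not repair this: equidistance from $L_q$ gives no control on $\widehat F$-equidistance between arbitrary pairs of nearby leaves without a genuine comparison argument. Since your part (a) is deduced from (b), and (c) is only as good as (a), the whole chain collapses at this step. Note also that you never use the closed (locally closed) leaves hypothesis, which is a warning sign: in the paper this hypothesis is essential precisely for the passage to the flat model.

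The paper avoids this by using two different metrics. In Proposition \ref{proposition-metricFS} the slice metric $\widehat F$ is \emph{not} the flat pull-back but the quotient (Finsler submersion) norm of $F_x$ under $d\pi_x$, where $\pi$ is a local submersion whose fibers are the leaves of the regular subfoliation $\F^2$ of Lemma \ref{lemma-subfoliation}; with this choice the $F$-gradient of the distance function to a plaque projects to the $\widehat F$-gradient of the induced function on $S_q$ (via \cite[Prop.\ 2.2]{Duran} and the transnormal-function result of He--Yin--Shen), which is what makes (a) and the distance statement (b) provable. The Minkowski model of (c) is then obtained in Proposition \ref{proposition-sliceflat-metric} as the blow-up limit $\widehat F^{\lambda}=\frac1\lambda h_\lambda^*\widehat F\to\widehat F^{0}$, and proving that $\F_q$ is still Finsler for $\widehat F^{0}$ is the hard part: since $\exp$ is only $C^1$ at $0$, the convergence is merely uniform, tubular neighborhoods for $\widehat F^\lambda$ cannot be controlled as $\lambda\to 0$, and the equidistance must be proved at a uniform scale by a Kleiner-lemma argument (minimizers between leaves are regular in their interior), transport of horizontal geodesics along leaves (Remark \ref{rem:transport}), and compactness of the leaves of $\F_q$ — the only place the closed-leaves hypothesis enters. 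None of these ingredients appears in your proposal, so as written it does not prove the theorem; to make your route work you would have to supply essentially the content of both propositions.
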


\begin{remark}
We stress that both theorems  and the corollary are proved under a hypothesis weaker than having closed leaves, namely, the leaves are locally closed (see Definition \ref{locclosed}).
\end{remark}

This paper is organized as follows. In Section \ref{section-preliminaries} we fix some notations and briefly review a few facts about
Finsler Geometry. In section \ref{section-facts-SFF} we sum up a few results on singular Finsler foliations that are analogous to the classical results on singular Riemannian foliations. In particular we prove Propositions \ref{proposition-metricFS} and  \ref{proposition-sliceflat-metric},  which are collected above in Theorem \ref{thm:slice}.  In 
Section~\ref{section-singular-Finsler-Foliation-Randers} we prove 
Theorem \ref{theorem-Randersfoliation} and Corollary \ref{the-corollary} .

This paper establishes a connection between the theory of singular foliations and Finsler Geometry.
This leads us to the natural question  about how much the reader should know about these two topics. 
In this presentation we tried to balance brevity of explanations with the amount of prerequisites that the reader needs. In this way we hope to have made the main ideas accessible to at least  two kinds of readers, 
those who know Finsler geometry  but may not have previous experience with singular foliations 
and those with  experience in the theory of singular foliations but without previous contact with Finsler geometry.

\

\emph{Acknowledgements.} The second author thanks the Department of Mathematics of the University of Murcia
for its hospitality during his research stay at Murcia-Spain in the period of  April-September 2017. 
The first and third  authors thank the  University of Murcia and IME-USP for their  hospitality during their  visits to these institutions.

\section{Preliminaries}
\label{section-preliminaries}

In this section we fix some notations and briefly review a few facts about Finsler Geometry
which will be used in this paper. For more details        see \cite{Duran,BaChSh00,Miguel-Chern-connection,Zhongmin-Shen}.

\subsection{Finsler metrics, fundamental tensor and orthogonal cone}
\label{section-finsler-metrics}

Let us begin by introducing the concepts at the level of vector spaces. Let $V$ be a vector space and $F:V\rightarrow [0,+\infty)$ a function. We say that $F$ is a {\it Minkowski norm} if
\begin{enumerate}
\item $F$ is smooth on $V \setminus  \{0 \}$,
\item $F$ is positive homogeneous of degree 1, that is $F(\lambda v)=\lambda\, F(v)$ for
every $v\in V$ and $\lambda>0$,
\item for every $v\in V \setminus \{ 0\}$, the \emph{fundamental tensor} of $F$ defined as
\begin{equation}\label{fundtensor}
 g_{v}(u,w)=\frac{1}{2}\frac{\partial^{2}}{\partial t\partial  s} F^{2}(v+tu+sw)|_{t=s=0}
 \end{equation}
for any $u,w\in V$ is a positive-definite bilinear  symmetric form. 
\end{enumerate} 
We will say that $(V,F)$ is a \emph{Minkowski space}. 

Let us consider  a manifold $M$.  
We say that a  function $F:TM\to [0,+\infty)$ is a \emph{Finsler metric} if
\begin{enumerate}
\item $F$ is smooth on $TM\setminus \bf 0$,
\item for every $p\in M$, $F_p=F|_{T_pM}$ is a Minkowski norm on $T_pM$.
\end{enumerate}
 In such a case, if $\tau:TM\rightarrow M$ is the natural projection and $v\in TM\setminus \bf 0$, we will say that the fundamental tensor $g_v$ of $F$ is the one defined in \eqref{fundtensor} for $F_{\tau(v)}$. 
Let us now sum up some properties of the fundamental tensor.
\begin{lemma} Let $(M,F)$ be a Finsler manifold and, given $v\in TM\setminus \bf 0$, let $g_{v}$ be the fundamental tensor defined in \eqref{fundtensor}. Then
\label{lemma-properties-fundamentaltensor}
\begin{enumerate}
\item[(a)] $g_{\lambda v}= g_{v}$ for $\lambda>0$,
\item[(b)] $g_{v}(v,v)=F^{2}(v)$,
\item[(c)] $g_{v}(v,u)=\frac{1}{2} \frac{\partial}{\partial z} F^{2}(v+ zu)|_{z=0}.$
\end{enumerate}
\end{lemma}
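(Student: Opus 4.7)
The plan is to derive all three assertions purely from the positive $1$-homogeneity of $F$, equivalently the $2$-homogeneity of $G:=F^{2}$, combined with Euler's identity for homogeneous functions. Recall that $G$ is smooth on $V\setminus\{0\}$ and satisfies $G(\lambda v)=\lambda^{2}G(v)$ for $\lambda>0$. I would work fiberwise, so without loss of generality replace $T_{p}M$ by a real vector space $V$ and treat $g_{v}$ as the symmetric bilinear form $\tfrac12(d^{2}G)_{v}$ on $V$.

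For (a), the plan is to plug $\lambda v$ into the defining formula \eqref{fundtensor} and factor $\lambda$ out of the argument. Since $F^{2}$ is $2$-homogeneous,
\[
F^{2}(\lambda v+tu+sw)=\lambda^{2}F^{2}\!\left(v+\tfrac{t}{\lambda}u+\tfrac{s}{\lambda}w\right),
\]
and applying $\tfrac12\partial^{2}/\partial t\partial s$ at $t=s=0$ produces a factor $\lambda^{2}\cdot\lambda^{-2}=1$ by the chain rule, yielding $g_{\lambda v}(u,w)=g_{v}(u,w)$.

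For (b) and (c), I would invoke Euler's theorem twice. Applied to $G$ itself it gives $dG_{v}(v)=2G(v)$ for every $v\neq 0$. Now the map $v\mapsto dG_{v}\in V^{*}$ is positive $1$-homogeneous, so differentiating the Euler identity in an arbitrary direction $u$ (or, equivalently, applying Euler once more to each linear functional $v\mapsto dG_{v}(u)$) produces
\[
(d^{2}G)_{v}(v,u)+dG_{v}(u)=2\,dG_{v}(u),\qquad\text{hence}\qquad (d^{2}G)_{v}(v,u)=dG_{v}(u).
\]
By the definition \eqref{fundtensor} this reads $g_{v}(v,u)=\tfrac12\,dG_{v}(u)=\tfrac12\,\tfrac{\partial}{\partial z}F^{2}(v+zu)|_{z=0}$, which is (c). Specializing $u=v$ and reapplying $dG_{v}(v)=2G(v)$ gives $g_{v}(v,v)=\tfrac12\cdot 2F^{2}(v)=F^{2}(v)$, which is (b).

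I do not anticipate any serious obstacle: everything reduces to careful bookkeeping with the chain rule and Euler's identity, and the required smoothness of $G$ holds on $V\setminus\{0\}$, where all computations take place since we assume $v\neq 0$. The only minor subtlety is to formulate the second use of Euler correctly — either as differentiation of the identity $dG_{v}(v)=2G(v)$ in $v$, or by noting directly that $dG_{\lambda v}=\lambda\,dG_{v}$ and differentiating in $\lambda$ at $\lambda=1$ — which amounts to the same statement about the homogeneity degree of the Hessian.
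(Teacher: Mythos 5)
Your argument is correct: part (a) follows exactly as you say from the $2$-homogeneity of $F^{2}$ and the chain rule (the factors $\lambda^{2}$ and $\lambda^{-1}\cdot\lambda^{-1}$ cancel), and parts (b), (c) follow from Euler's identity applied to $F^{2}$ and then to the $1$-homogeneous map $v\mapsto dF^{2}_{v}$, all computations taking place on $V\setminus\{0\}$ where $F^{2}$ is smooth. The paper states this lemma without proof, treating these as standard facts of Finsler geometry, and your homogeneity/Euler derivation is precisely the standard argument that fills in that omission.
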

\begin{remark}
\label{remark-legrendre-transformation}
Part (c) above implies in particular that the Legrendre transformation  $\mathcal{L}:TM\setminus{\bf 0}\mapsto TM^{*}\setminus{\bf 0}$ 
associated with   $\frac{1}{2}F^{2}$ can be computed as $\mathcal{L}(v)= g_{v}(v,\cdot).$ 
Also recall that  $\mathcal{L}:TM\setminus {\bf 0} \mapsto TM^{*} \setminus \bf 0$ is a diffeomorphism. 
\end{remark}

We will also need the concept of orthogonal cone. Given a submanifold $P$ on a Finsler manifold $(M,F)$ we say that a 
vector $v\in T_{q}M$ is \emph{orthogonal  to} $P$ \emph{at} $q$   
if $g_{v}(v,u)=0$ for all $u\in T_{q}P$. The set of all (non zero) normal
vectors to $P$ at $q$, denoted as $\nu_{q}P$ is called the \emph{orthogonal cone} and, as the name suggests, it  is not always a subspace but a cone.        In particular, it is a submanifold of $T_{q}M$ of dimension $n-\dim P$. Moreover, $\nu(P)$ will denote the space of all orthogonal vectors to $P$ and $\nu^1(P)$, the space of unit orthogonal vectors.  
\begin{proposition}\label{proposition-nuP}
     Let $(M,F)$ be a Finsler manifold and $P$ a submanifold of $M$. Then $\nu(P)$ and $\nu^1(P)$ are smooth submanifolds and the restrictions of the canonical projection $\tau:\nu(P)\rightarrow P$ and $\tau:\nu^1(P)\rightarrow P$ are submersions.   
\end{proposition}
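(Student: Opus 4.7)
The plan is to reduce the nonlinear orthogonal cone bundle to the linear conormal bundle via the Legendre transformation $\mathcal{L}:TM\setminus\mathbf{0}\to T^*M\setminus\mathbf{0}$ from Remark \ref{remark-legrendre-transformation}. First I would consider the conormal bundle
\[
\nu^*(P):=\{\alpha\in T^*M|_P : \alpha(u)=0\text{ for all }u\in T_{\tau^*(\alpha)}P\},
\]
which is a smooth rank-$(n-\dim P)$ vector subbundle of $T^*M|_P$. In particular $\nu^*(P)$ is a smooth $n$-dimensional submanifold of $T^*M$, and the restriction of the canonical projection $\tau^*:T^*M\to M$ to it is a surjective submersion; both are standard vector-bundle facts.

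Using the formula $\mathcal{L}(v)=g_v(v,\cdot)$ from Remark \ref{remark-legrendre-transformation}, a nonzero $v\in T_qM$ lies in $\nu_q(P)$ precisely when $\mathcal{L}(v)\in\nu_q^*(P)$, so one gets the identification $\nu(P)=\mathcal{L}^{-1}(\nu^*(P)\setminus\mathbf{0})$. Since $\mathcal{L}$ is a fiber-preserving diffeomorphism (that is, $\tau\circ\mathcal{L}^{-1}=\tau^*$), I can transport the smooth structure and the submersion property of $\nu^*(P)\setminus\mathbf{0}\to P$ back to $\nu(P)\to P$, which settles the first assertion. For $\nu^1(P)=F^{-1}(1)\cap\nu(P)$, I would exploit the positive $1$-homogeneity of $F$: since every $\nu_q(P)$ is a cone, the Euler radial vector field $v\mapsto v$ is tangent to $\nu(P)$, and Lemma \ref{lemma-properties-fundamentaltensor}(b) gives $dF_v(v)=F(v)>0$, so $1$ is a regular value of $F|_{\nu(P)}$ and $\nu^1(P)$ is a smooth codimension-$1$ submanifold. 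The radial map $v\mapsto(v/F(v),F(v))$ then realizes a diffeomorphism $\nu(P)\setminus\mathbf{0}\cong \nu^1(P)\times(0,\infty)$ commuting with projection to $P$, from which the submersion property for $\tau|_{\nu^1(P)}$ follows.

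The only real obstacle is bookkeeping: one must verify that $\mathcal{L}$ is genuinely fiber-preserving (immediate from the fact that the fundamental tensor at $v$ lives on the single tangent space $T_{\tau(v)}M$) and carefully excise the zero section when pulling back from $T^*M$, since $\mathcal{L}$ is only defined on nonzero vectors. Once these points are clarified, the rest reduces to well-established facts about smooth vector subbundles and about regular values of positively homogeneous functions on cones.
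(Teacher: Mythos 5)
Your proposal is correct and takes essentially the same route as the paper: you identify $\nu(P)$ as the preimage under the Legendre transformation $\mathcal{L}$ of the (nonzero part of the) annihilator/conormal bundle of $P$, and transport smoothness and the submersion property of $\tau^*$ restricted to that bundle back through the fiber-preserving diffeomorphism $\mathcal{L}$. The only difference is cosmetic, in the treatment of $\nu^1(P)$: you invoke the regular-value theorem for $F|_{\nu(P)}$ (using Euler homogeneity, $dF_v(v)=F(v)>0$) together with the radial splitting $\nu(P)\cong\nu^1(P)\times(0,\infty)$, whereas the paper obtains smoothness from transversality of $\nu(P)$ with the unit bundle over $P$ and the submersion property from decomposing $T_v\nu(P)$ into the tangent space of $\nu^1(P)$ plus a vertical (radial) part — both arguments rest on the same transversality of the radial direction to the unit level set.
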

\begin{proof}
     Let $\mathcal{N}(P)$ be the submanifold of $TM^{*}\setminus \bf 0$ of the annihilators of $P$ and $\tau^*:TM^*\rightarrow M$ the canonical projection. Then $\nu(P)=\mathcal{L}^{-1}(\mathcal{N}(P))$ (recall that the Lengendre transformation is a diffeomorphism, Remark \ref{remark-legrendre-transformation}) and then it is a submanifold. Moreover, $\nu^1(P)=\nu(P)\cap T^1P$, where $T^1P$ is the $F$-unit bundle of $P$. As $\nu(P)$ and $T^1P$ are transversal, we conclude that $\nu^1(P)$ is a submanifol. Moreover,  it is straightforward to check that $\tau^*|_{\mathcal{N}(P)}:\mathcal{N}(P)\rightarrow P$ is a submersion and $\tau:\nu(P)\rightarrow P$ is the composition $\tau^*|_{\mathcal{N}(P)}\circ \mathcal L|_{\nu(P)}$, and hence, $\tau|_{\nu(P)}$ is a submersion. That $\tau:\nu^1(P)\rightarrow P$ is a submersion follows using that $\tau:\nu(P)\rightarrow P$ is a submersion and the fact that the tangent space to $\nu(P)$ can be decomposed as the sum of the tangent space to $\nu^1(P)$ and a vertical part.     
\end{proof}
\begin{proposition}\label{proposition-tnu}
     Let $(V,F)$ be a Minkowski space, $W\subset V$ a linear subspace of $V$. Then $T_v\nu_0(W)=\{ u\in V: g_v(u,w)=0\, \forall w\in W\}$, where $g$ is the fundamental tensor of $F$.   
\end{proposition}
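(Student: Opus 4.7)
The plan is to identify $\nu_0(W)$ with the preimage of a linear subspace under the Legendre transformation and then transfer the tangent-space computation. Recall that by Remark \ref{remark-legrendre-transformation}, $\mathcal{L}: V\setminus\{0\}\to V^{*}\setminus\{0\}$ is a diffeomorphism and $\mathcal{L}(v)=g_v(v,\cdot)$. By definition, $v\in\nu_0(W)$ iff $g_v(v,w)=0$ for all $w\in W$, which is equivalent to $\mathcal{L}(v)\in W^{\circ}$, where $W^{\circ}\subset V^{*}$ denotes the annihilator of $W$. Hence $\nu_0(W)=\mathcal{L}^{-1}(W^{\circ})$. Since $\mathcal{L}$ is a diffeomorphism and $W^{\circ}$ is a linear subspace of $V^{*}$ (whose tangent space at every point is canonically identified with $W^{\circ}$ itself), one obtains
$$T_v\nu_0(W)=(d\mathcal{L}_v)^{-1}(W^{\circ}).$$

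The next step is to compute $d\mathcal{L}_v$. Setting $h:=\tfrac{1}{2}F^{2}$, part (c) of Lemma \ref{lemma-properties-fundamentaltensor} gives $dh_v(u)=g_v(v,u)=\mathcal{L}(v)(u)$, so $\mathcal{L}=dh$ on $V\setminus\{0\}$. Differentiating once more along a straight line $t\mapsto v+tu$ and using the very definition \eqref{fundtensor} of the fundamental tensor as the Hessian of $h$, one gets
$$d\mathcal{L}_v(u)(w)=\frac{\partial^{2}}{\partial t\,\partial s}h(v+tu+sw)\Big|_{t=s=0}=g_v(u,w)$$
for all $u,w\in V$. Substituting into the previous display yields
$$T_v\nu_0(W)=\{u\in V:g_v(u,w)=0\text{ for all }w\in W\},$$
which is exactly the claim.

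There is no real obstacle here: the argument is essentially a bookkeeping exercise with the Legendre transformation, and once one observes that $\mathcal{L}$ is the differential of $\tfrac12 F^{2}$ and that the fundamental tensor is its Hessian, the identification of $T_v\nu_0(W)$ is forced. The only mild subtlety is to justify that $W^{\circ}$, being a linear subspace, is its own tangent space at every point, so that pulling back by the diffeomorphism $\mathcal{L}$ produces exactly the kernel condition $g_v(u,\cdot)|_W=0$.
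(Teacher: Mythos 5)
Your argument is correct, but it proceeds differently from the paper's proof. The paper takes a curve $\alpha$ in $\nu_0(W)$, differentiates the defining identity $g_{\alpha(t)}(\alpha(t),w)=0$, and uses the vanishing of the Cartan term $C_{\alpha}(\alpha',\alpha,w)$ (a consequence of homogeneity, phrased there as a special case of the compatibility of the Chern connection with $g$) to obtain the inclusion $T_v\nu_0(W)\subseteq\{u: g_v(u,\cdot)|_W=0\}$, and then concludes equality by comparing dimensions. You instead observe that $\mathcal{L}=d\bigl(\tfrac12 F^2\bigr)$ by part (c) of Lemma \ref{lemma-properties-fundamentaltensor}, identify $\nu_0(W)=\mathcal{L}^{-1}(W^{\circ}\setminus\{0\})$ (strictly one should remove the origin from $W^{\circ}$, since $\mathcal{L}$ is only defined and a diffeomorphism away from zero, but this changes nothing at the level of tangent spaces), and compute $d\mathcal{L}_v(u)(w)=g_v(u,w)$ from the Hessian definition \eqref{fundtensor}; pulling back the (constant) tangent space of the annihilator then gives the equality in one stroke, with no dimension count, and it simultaneously re-establishes that $\nu_0(W)$ is a submanifold — essentially the linear-algebra version of the Legendre-transform argument the paper uses in Proposition \ref{proposition-nuP}. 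Both routes ultimately rest on the homogeneity of $F$ (yours through Euler's identity hidden in Lemma \ref{lemma-properties-fundamentaltensor}(c), the paper's through $C_v(\cdot,v,\cdot)=0$); the paper's computation is more elementary and makes the geometric mechanism (Cartan tensor, Chern compatibility) explicit, while yours is shorter, self-contained given Remark \ref{remark-legrendre-transformation}, and yields the two inclusions at once.
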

\begin{proof}
     Recall that the Cartan tensor of $F$ can be defined as 
\[C_v(w_1,w_2,w_3)=\frac{1}{2} \frac{\partial}{\partial z} g_{v+z w_1}(w_2,w_3)\]
for $v\in V\setminus 0$ and $w_1,w_2,w_3\in V$. Given $u\in T_v\nu_0(W)$, there exists a smooth curve $\alpha:(-\epsilon,\epsilon)\rightarrow \nu_0(W)$ for some $\epsilon>0$, such that $u=\alpha'(0)$ and $v=\alpha(0)$. In particular, we have that $g_{\alpha(t)}(\alpha(t),w)=0$ for all $t\in(-\epsilon,\epsilon)$ and $w\in W$. Moreover, 
\[ \frac{d}{dt}g_{\alpha(t)}(\alpha(t),w)= 2C_{\alpha(t)}(\alpha'(t),\alpha(t),w)+g_{\alpha(t)}(\alpha'(t),w)\]
and by homogeneity, the first term on the right side is zero (last formula is a very particular case of the compatibility of the Chern connection with $g$, but as we are in a tangent space, Chern connection can be indentified with a standard derivative). Therefore, it follows that $g_{v}(u,w)=0$ for all $w\in W$. This implies the inclusion $``\subseteq"$ and as the dimensions are the same, they coincide.   
\end{proof}
\subsection{Geodesics and the exponential map} Given a Finsler manifold $(M,F)$, geodesics can be defined as critical points of the energy functional 
\begin{equation}
E_F(\gamma)=\frac 12\int_a^b F(\dot\gamma)^2 ds
\end{equation}
defined for piecewise smooth curves $\gamma:[a,b]\rightarrow M$ with fixed endpoints. Alternatively, they can also be defined with one of the connections associated with Finsler metrics, \cite[\S 2.4]{BaChSh00}. In particular, observe that the Chern connection can be interpreted as a family of affine connections (see for example \cite{Miguel-Chern-connection,JavSoa15}). As a matter of fact, for every vector $v\in TM$, there exists a unique maximal geodesic $\gamma_v:(a,b)\rightarrow M$ such that $\dot\gamma_v(0)=v$ and one can define the exponential map in an open subset $\mathcal U\subset TM$ for those vectors $v$ such that the maximal interval of definition $(a,b)$ of $\gamma_v$ includes the value $1$. Then $\exp:\mathcal U\rightarrow M$ is defined as $\exp(v)=\gamma_v(1)$ and it is smooth away from the zero section and $C^1$ on the zero section \cite[\S 5.3]{BaChSh00}. In particular, this allows one to show that geodesics minimize the distance associated with Finsler metrics in some interval. More generally, geodesics minimize (in some interval) the distance with any submanifold        $P$ whenever they are orthogonal to $P$      \cite{NotaViz}.


\subsection{Zermelo and Randers metric}
Let us now recall a central example of Finsler metrics that appears naturally in several applications; see e.g., \cite{Bao-Robles-Shen, CapJavSan10}. These metrics can be described in several ways. Classically, a Randers metric in a manifold $M$, $R:TM\rightarrow [0,+\infty)$, is defined as $R(v)=\sqrt{a(v,v)}+\beta(v)$, where $a$ is a Riemannian metric in $M$ and $\beta$, a one-form such that $\|\beta\|_a<1$ at every point. Here the norm $\|\cdot\|_a$ is computed using the norm of the Riemannian metric $a$. On the other hand,
a Finsler metric  $Z:TM\to [0,+\infty)$ is said to be a \emph{Zermelo metric}
with \emph{Zermelo Data} $(\metriczermelo,W),$  for a Riemannian metric $\metriczermelo$ and  smooth vector field  $W$ with $\metriczermelo(W,W)<1$  on the whole $M$  (the \emph{wind}), if $Z$ is the solution of 

\begin{equation*}
	\metriczermelo(\frac{v}{Z(v)}-W,\frac{v}{Z(v)}-W)=1, v \in TM\setminus \bf 0.
\end{equation*}
Denote $\mathcal{I}^{Z}_p$ the \emph{indicatrix} of $Z$ at $p\in M$, i.e. $\mathcal{I}^{Z}_{p}=\{v\in T_{p}M; Z(v)=1\}$. 
The above equation is equivalent to saying that 
 $\mathcal{I}_{p}^{Z}=\mathcal{I}_{p}^{\metriczermelo}+W.$ It turns out that both types of metrics, Randers and Zermelo, are different ways of describing the same family
 (see \cite[\S 1.3]{Bao-Robles-Shen} and also \cite[Prop. 3.1]{BiJa11}).

\begin{lemma}
\label{lemma-Randes-equivalence}
Let $Z:TM\to [0,+\infty)$ be a Finsler metric in a manifold $M$. Then the following claims are equivalent:
\begin{enumerate}
 \item[(a)] $Z$  is a Zermelo metric with Zermelo data $(\metriczermelo,W)$ of a Riemannian metric $\metriczermelo$ and a wind $W$  with $\metriczermelo(W,W)<1$ at every point; 
\item[(b)] $Z$ is a Randers metric,  $Z(v)=\sqrt{a(v,v)}+\beta(v)$, for a Riemannian metric $a$ and a 1-form $\beta$ with $\|\beta\|_a<1$ at every point.
\end{enumerate}
In addition, given a Zermelo data $(\metriczermelo,W)$, the pair $(a,\beta)$   is obtained as follows:
	$$a(u,w)=\frac{\lambda \metriczermelo(u,w)+\metriczermelo(u,W)\metriczermelo(w,W)}{\lambda^2} \mbox{ \ ,  \ }	\beta(w)=-\frac{\metriczermelo(w,W)}{\lambda},$$
where $\lambda:=1-\metriczermelo(W,W)$. Moreover,  if $Z$ is a Randers metric as in $\rm (b)$, its Zermelo data is given by
$$\metriczermelo(w,u)=\mu (a(w,u)-a(\dualbeta,w)a(\dualbeta,u)), \mathrm{\, and \, }  W=-\frac{\dualbeta}{\mu},$$ 
where $\beta(\cdot)=a(\cdot,\dualbeta)$ and $\mu:=1-a(\dualbeta,\dualbeta)$.
\end{lemma}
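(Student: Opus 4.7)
The plan is a direct algebraic verification on both directions, using the fact that the Zermelo equation is quadratic in $Z(v)$ and can be solved explicitly.

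For the implication (a) $\Rightarrow$ (b), I would expand the defining Zermelo equation $\metriczermelo(v/Z(v)-W, v/Z(v)-W)=1$ and multiply by $Z(v)^{2}$ to obtain the quadratic
\begin{equation*}
\lambda\, Z(v)^{2} + 2\metriczermelo(v,W)\, Z(v) - \metriczermelo(v,v) = 0,
\end{equation*}
where $\lambda := 1-\metriczermelo(W,W) >0$. Taking the positive root (forced by $Z>0$ on $TM\setminus {\bf 0}$) gives
\begin{equation*}
Z(v) = \frac{-\metriczermelo(v,W) + \sqrt{\metriczermelo(v,W)^{2} + \lambda\, \metriczermelo(v,v)}}{\lambda}.
\end{equation*}
Grouping the two terms on the right-hand side and comparing with the stated formulas for $a$ and $\beta$, one reads off $Z(v) = \sqrt{a(v,v)}+\beta(v)$ by direct substitution. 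It remains to see that $a$ is Riemannian and that $\|\beta\|_{a}<1$. Positive-definiteness of $a$ is clear from its expression. To compute $\|\beta\|_{a}$, I would guess the $a$-dual $\dualbeta$ of $\beta$ by trying $\dualbeta=-\lambda W$; using the identity $\lambda + \metriczermelo(W,W)=1$, one checks $a(u,-\lambda W)=-\metriczermelo(u,W)/\lambda = \beta(u)$, and then $\|\beta\|_{a}^{2}=a(\dualbeta,\dualbeta) = \metriczermelo(W,W)<1$.

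For the converse (b) $\Rightarrow$ (a), I would proceed symmetrically. Given $Z(v)=\sqrt{a(v,v)}+\beta(v)$ with $\|\beta\|_{a}<1$, let $\dualbeta$ be the $a$-dual of $\beta$ and $\mu=1-a(\dualbeta,\dualbeta)>0$. Define $\metriczermelo$ and $W$ by the stated formulas. The positivity $\metriczermelo(w,w)>0$ for $w\neq 0$ follows from the Cauchy--Schwarz-type inequality $a(\dualbeta,w)^{2}\le a(\dualbeta,\dualbeta) a(w,w)$ together with $\mu>0$, so $\metriczermelo$ is Riemannian; the inequality $\metriczermelo(W,W)<1$ is obtained by a direct computation analogous to the one above. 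Finally, substituting these $\metriczermelo$ and $W$ into the Zermelo identity and isolating $\sqrt{a(v,v)}$ recovers the Randers expression for $Z$.

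To close the loop I would finish by verifying, by substitution, that the two assignments $(\metriczermelo,W)\mapsto(a,\beta)$ and $(a,\beta)\mapsto(\metriczermelo,W)$ given in the statement are mutually inverse; the key identities are again $\lambda + \metriczermelo(W,W)=1$ and $\mu + a(\dualbeta,\dualbeta)=1$, which ensure the various factors of $\lambda$ and $\mu$ collapse correctly. The only real obstacle is careful algebraic bookkeeping, in particular recognizing the correspondence $\dualbeta=-\lambda W$ between the dual one-form and the wind, which is what makes the two descriptions agree.
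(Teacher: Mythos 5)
Your proof is correct: the quadratic in $Z(v)$, its positive root, the identification $\dualbeta=-\lambda W$ (via $\lambda+\metriczermelo(W,W)=1$), and the resulting identities $\|\beta\|_a^2=\metriczermelo(W,W)$ and $\mu=\lambda$ all check out, and they make the two assignments mutually inverse as you claim. The paper itself does not prove this lemma but refers to the literature (Bao--Robles--Shen and Biliotti--Javaloyes), and your direct algebraic verification is essentially the standard argument given there, so there is nothing to flag.
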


Once we have recalled the equivalence between the definitions of Randers and Zermelo metrics, we  can present the explicit calculation of the
fundamental tensor \cite[Cor. 4.17]{JavSan14}.

\begin{lemma}
\label{lemma-product-gvvu}
Consider a Randers metric  $Z=\alpha +\beta=\sqrt{a}+\beta$ with Zermelo data $(\metriczermelo, W)$. 
Then its fundamental tensor is given by: 
 \begin{equation*}
 g_v(w,u)=\frac{Z(v)}{\alpha(v)}\big[a(w,u)-\frac{a(v,w)a(v,u)}{\alpha^2(v)}\big]+(\frac{a(v,w)}{\alpha(v)}+\beta(w))(\frac{a(v,u)}{\alpha(v)}+\beta(u)), 
 \end{equation*}
 for $v, u, w\in T_pM$ and $v\neq 0$. In particular	
\begin{equation*}
 g_v(v,u)=Z(v)\Big(\frac{a(u,v)}{\alpha(v)}+\beta(u)\Big)=\frac{Z(v)}{\mu \alpha(v)}\Big( \metriczermelo (v-Z(v)W,u )  \Big).	\end{equation*}  
\end{lemma}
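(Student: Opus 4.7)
The plan is a direct computation from the definition \eqref{fundtensor}, exploiting that $Z^2 = \alpha^2 + 2\alpha\beta + \beta^2$. I would handle the two squared summands first, as they are trivial: from $\alpha^2(v+tu+sw)=a(v+tu+sw,v+tu+sw)$ the mixed second derivative contributes $a(u,w)$ to $g_v(w,u)$, and from the linear expression $\beta^2(v+tu+sw) = (\beta(v)+t\beta(u)+s\beta(w))^2$ it contributes $\beta(u)\beta(w)$.

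The substantive step is the cross term $2\alpha\beta$. Setting $\phi(t,s):=\alpha(v+tu+sw)\,\beta(v+tu+sw)$ and using $\partial_t\alpha(v+tu+sw)|_{t=0} = a(u,v+sw)/\alpha(v+sw)$ together with the linearity of $\beta$, I would first compute
\[
\partial_t\phi(0,s) \;=\; \frac{a(u,v+sw)}{\alpha(v+sw)}\bigl(\beta(v)+s\beta(w)\bigr) + \alpha(v+sw)\,\beta(u),
\]
then differentiate in $s$ at $s=0$ (quotient rule on the first summand, chain rule on the second) to obtain
\[
\partial_s\partial_t\phi\big|_0 = \frac{a(u,w)\beta(v)}{\alpha} - \frac{a(v,u)a(v,w)\beta(v)}{\alpha^3} + \frac{a(v,u)\beta(w)+a(v,w)\beta(u)}{\alpha}.
\]
Adding the three contributions and using the identities $1+\beta(v)/\alpha = Z(v)/\alpha$ and $1 - Z(v)/\alpha = -\beta(v)/\alpha$, I would rewrite the coefficients of $a(u,w)$ and of $a(v,u)a(v,w)/\alpha^2$ in the desired form and verify that the remaining four ``mixed'' terms regroup precisely as the product $(a(v,u)/\alpha+\beta(u))(a(v,w)/\alpha+\beta(w))$.

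For the ``In particular'' part I would specialize $w=v$ in the formula just obtained: since $a(v,v)=\alpha^2$, the bracket $a(v,u)-a(v,v)a(v,u)/\alpha^2$ vanishes, while $a(v,v)/\alpha + \beta(v) = \alpha+\beta(v) = Z(v)$, leaving $g_v(v,u) = Z(v)\bigl(a(v,u)/\alpha + \beta(u)\bigr)$. To convert to Zermelo data, I would substitute $a(v,u) = [\lambda\metriczermelo(v,u)+\metriczermelo(v,W)\metriczermelo(u,W)]/\lambda^2$ and $\beta(u) = -\metriczermelo(u,W)/\lambda$ from Lemma~\ref{lemma-Randes-equivalence} (with $\lambda=\mu=1-\metriczermelo(W,W)$) into $a(v,u)/\alpha+\beta(u)$, put everything over the common denominator $\mu\alpha$, and use $\metriczermelo(v,W) = -\lambda\beta(v)$ together with $\lambda\alpha + \lambda\beta(v) = \lambda Z(v)$ to recognize the numerator as $\metriczermelo(v - Z(v)W,\, u)$.

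The main obstacle is purely bookkeeping: the four cross-derivative terms must be combined with the diagonal contributions $a(u,w)$ and $\beta(u)\beta(w)$ and rearranged into the factorized product asserted by the lemma. The conversion to Zermelo data is then a short algebraic identity once one notices the factor $v - Z(v)W$ in the numerator.
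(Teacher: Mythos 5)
Your computation is correct: the decomposition $Z^2=\alpha^2+2\alpha\beta+\beta^2$, the mixed derivative of the cross term, and the regrouping into the factorized form all check out, as does the specialization $w=v$ and the algebraic passage to Zermelo data via $\metriczermelo(v,W)=-\lambda\beta(v)$. The paper, however, does not carry out this computation: it quotes the general formula from \cite[Cor.\ 4.17]{JavSan14} without proof, and for the second identity it only remarks that it ``follows easily'' from Lemma \ref{lemma-properties-fundamentaltensor}, i.e.\ from part (c), $g_v(v,u)=\tfrac12\,\partial_z Z^2(v+zu)|_{z=0}$, which is a one-variable differentiation that bypasses the full fundamental tensor and is shorter than specializing $w=v$ in the general formula as you do. So your route is more self-contained (it actually proves the cited formula from the definition \eqref{fundtensor}), at the cost of the bookkeeping you describe, while the paper's route trades completeness for brevity. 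One small point worth recording explicitly in your argument: you write the Zermelo conversion ``with $\lambda=\mu=1-\metriczermelo(W,W)$'', and indeed $\lambda=\mu$ holds, but this is not a definition; it follows from the one-line verification $\metriczermelo(W,W)=a(\dualbeta,\dualbeta)$ using the formulas of Lemma \ref{lemma-Randes-equivalence}, and without it the denominator $\mu\alpha(v)$ in the statement would not match the $\lambda$'s produced by your substitution.
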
 
We stress that the calculation of second equation above, follows easily  from   Lemma \ref{lemma-properties-fundamentaltensor}. Furthermore,


\begin{lemma}
\label{lemma-isometryRanders}
Let $\psi: (M_{1},Z_{1})\to (M_{2},Z_{2})$ be a diffeomorphism between two Randers spaces with $Z_i=\sqrt{a_i}+\beta_i$, with Zermelo data $(h_i,W_i)$, $i=1,2$. Then the following three statements
are equivalent:
\begin{enumerate}
\item[(a)]  $\psi$ lifts to an isometry between $Z_1$ and $Z_2$,
\item[(b)]  $\psi^{*}a_{2}=a_1$ and $\psi^{*}\beta_{2}=\beta_{1}$,
\item[(c)]  $\psi^{*}h_{2}=h_{1}$ and $\psi_{*}W_{1}=W_{2}$.
\end{enumerate}
\end{lemma}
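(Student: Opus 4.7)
The plan is to prove the equivalences in the cycle $(a)\Rightarrow(b)\Rightarrow(c)\Rightarrow(a)$, using Lemma \ref{lemma-Randes-equivalence} as the main bridge between the Randers and Zermelo descriptions.

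First I would address the implication $(b)\Rightarrow(a)$, which is immediate: since $Z_i(v)=\sqrt{a_i(v,v)}+\beta_i(v)$ pointwise, any diffeomorphism pulling back $a_2$ to $a_1$ and $\beta_2$ to $\beta_1$ automatically satisfies $Z_1=Z_2\circ d\psi$. Next, for $(a)\Rightarrow(b)$ I would exploit the uniqueness of the Randers decomposition. The key observation is that, evaluating $Z_i$ at $v$ and $-v$ and using that $\sqrt{a_i(v,v)}=\sqrt{a_i(-v,-v)}$ while $\beta_i(-v)=-\beta_i(v)$, one obtains
\begin{equation*}
\sqrt{a_i(v,v)}=\tfrac{1}{2}\bigl(Z_i(v)+Z_i(-v)\bigr),\qquad \beta_i(v)=\tfrac{1}{2}\bigl(Z_i(v)-Z_i(-v)\bigr).
\end{equation*}
Therefore, if $\psi$ preserves $Z$, pulling back these identities yields $\psi^{*}\beta_{2}=\beta_{1}$ and $\psi^{*}\sqrt{a_{2}(\cdot,\cdot)}=\sqrt{a_{1}(\cdot,\cdot)}$; squaring and polarizing the latter recovers $\psi^{*}a_{2}=a_{1}$.

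For $(b)\Leftrightarrow(c)$ I would simply invoke the explicit formulas in Lemma \ref{lemma-Randes-equivalence} that convert between Randers data $(a,\beta)$ and Zermelo data $(h,W)$. These formulas are polynomial/rational expressions built out of $a$, $\beta$, and the dual vector field $\vec\beta$ (respectively $h$ and $W$), and pullback by a diffeomorphism commutes with all these tensorial constructions. In particular, since $a$ and $\beta$ determine $(h,W)$ uniquely and vice-versa, the condition $\psi^{*}a_{2}=a_{1}$, $\psi^{*}\beta_{2}=\beta_{1}$ is equivalent to $\psi^{*}h_{2}=h_{1}$ together with the pushforward relation $\psi_{*}W_{1}=W_{2}$ (the sign and normalization being the one dictated by $W=-\vec\beta/\mu$).

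I do not anticipate a serious obstacle: the only non-tautological step is the uniqueness of the Randers decomposition in $(a)\Rightarrow(b)$, and this is handled cleanly by separating $Z(v)$ into its symmetric and antisymmetric parts in $v\leftrightarrow -v$. Everything else is a direct transcription of the formulas of Lemma \ref{lemma-Randes-equivalence} under pullback, together with naturality of the musical isomorphism $\beta\leftrightarrow\vec\beta$ with respect to isometries of $a$ (equivalently of $h$).
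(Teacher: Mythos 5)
Your argument is correct. Note that the paper itself states Lemma \ref{lemma-isometryRanders} without proof, treating it as a standard consequence of the Randers--Zermelo correspondence recalled in Lemma \ref{lemma-Randes-equivalence}, so there is no in-paper proof to compare against; your write-up simply supplies the expected argument. The two steps you give are exactly the right ones: the equivalence $(a)\Leftrightarrow(b)$ rests on the uniqueness of the Randers decomposition, which your symmetrization trick $\sqrt{a_i(v,v)}=\tfrac12\bigl(Z_i(v)+Z_i(-v)\bigr)$, $\beta_i(v)=\tfrac12\bigl(Z_i(v)-Z_i(-v)\bigr)$ establishes cleanly (polarization then recovers $a_i$); and $(b)\Leftrightarrow(c)$ follows from the naturality under $\psi$ of the explicit conversion formulas in Lemma \ref{lemma-Randes-equivalence}, once one checks, as you indicate, that $\psi^{*}a_2=a_1$ and $\psi^{*}\beta_2=\beta_1$ force $d\psi(\vec\beta_1)=\vec\beta_2$ (and hence $\mu_1=\mu_2\circ\psi$), respectively that $\psi^{*}h_2=h_1$ and $\psi_*W_1=W_2$ force $\lambda_1=\lambda_2\circ\psi$. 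One cosmetic remark: you announce the cycle $(a)\Rightarrow(b)\Rightarrow(c)\Rightarrow(a)$ but actually prove $(a)\Leftrightarrow(b)$ and $(b)\Leftrightarrow(c)$, which is equally sufficient; just state the structure you actually follow.
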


Finally, given a vector space $V$ and a Minkowski norm $Z$ of Randers type on it, we will say that   $(V,Z)$ is a \emph{Randers-Minkowski} space.

\subsection{Finsler Submersions} As recalled in the introduction, a submersion between Finsler manifolds $\pi:(M,F_1)\rightarrow (B,F_2)$   is a  \emph{Finsler submersion} if 
$d\pi_p(B_p(0,1))=B_{\pi(p)}(0,1),$ for every $p\in M$,  where $B_p(0,1)$ and $B_{\pi(p)}(0,1)$ 
are the unit balls of the Minkowski spaces $(T_pM,(F_1)_p)$ and $(T_{\pi(p)}B,(F_2)_{\pi (p)})$, respectively. The next characterization was given in \cite[Prop. 2.1]{Duran}.

\begin{lemma}
If $\pi:(M,F_1)\rightarrow (B,F_2)$ is a Finsler submersion, then for every $w\in T_{\pi(p)}B$ we have
$F_{2}(w)=\inf \{ F_{1}(v);  v\in T_{p}M \ \mathrm{ and} \ d\pi_{p}(v)=w \}$. 
\end{lemma}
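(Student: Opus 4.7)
The plan is to prove the two inequalities separately, both exploiting the submersion hypothesis $d\pi_p(B_p(0,1))=B_{\pi(p)}(0,1)$ together with the positive homogeneity of Minkowski norms.

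For the inequality $F_2(w)\le F_1(v)$ for every $v$ with $d\pi_p(v)=w$, the idea is to rescale. If $v\ne 0$, then by positive $1$-homogeneity of $F_1$ the vector $v/F_1(v)$ lies on the $F_1$-unit sphere, hence in $B_p(0,1)$. By the submersion hypothesis, $d\pi_p(v/F_1(v))=w/F_1(v)$ belongs to $B_{\pi(p)}(0,1)$, so $F_2(w/F_1(v))\le 1$ and, using the homogeneity of $F_2$, $F_2(w)\le F_1(v)$. The case $v=0$ (hence $w=0$) is trivial. Taking the infimum gives $F_2(w)\le \inf\{F_1(v):d\pi_p(v)=w\}$.

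For the reverse inequality, I will show that the infimum is actually attained and equals $F_2(w)$. If $w=0$, take $v=0$. Otherwise, consider the normalized vector $\tilde w:=w/F_2(w)$, which satisfies $F_2(\tilde w)=1$, so $\tilde w\in B_{\pi(p)}(0,1)$. By the submersion hypothesis, there exists $\tilde v\in B_p(0,1)$ with $d\pi_p(\tilde v)=\tilde w$, i.e.\ $F_1(\tilde v)\le 1$. Combining this with the first inequality, $1=F_2(\tilde w)\le F_1(\tilde v)\le 1$, so $F_1(\tilde v)=1$. Setting $v:=F_2(w)\,\tilde v$ yields $d\pi_p(v)=w$ and $F_1(v)=F_2(w)$, which completes the proof.

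There is no real obstacle here: the argument is a direct unpacking of the submersion definition combined with $1$-homogeneity. The only mildly subtle point is the last step, where one has to rule out the possibility that the preimage $\tilde v$ of a boundary point $\tilde w$ of the $F_2$-unit ball lies strictly inside the $F_1$-unit ball; this is where the first inequality, proved beforehand, is actually needed (and shows in passing that the infimum is always attained).
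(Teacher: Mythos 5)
Your argument is correct, and it is essentially the expected one: the paper itself does not prove this statement but quotes it from \cite[Prop.\ 2.1]{Duran}, and the proof there is, as in your proposal, a direct unpacking of the definition $d\pi_p(B_p(0,1))=B_{\pi(p)}(0,1)$ via positive $1$-homogeneity, so there is no divergence of method to report. One small point worth making explicit: your proof uses that the unit spheres belong to the unit balls (you place $v/F_1(v)$ and $w/F_2(w)$, which have norm exactly $1$, inside $B_p(0,1)$ and $B_{\pi(p)}(0,1)$), i.e.\ you are reading ``unit ball'' as the closed ball. If one instead takes open balls, the same scheme still works after an $\varepsilon$-perturbation: for the first inequality use $v/(F_1(v)+\varepsilon)$ and let $\varepsilon\to 0$; for the second, lift $w/(F_2(w)+\varepsilon)$ to a vector of $F_1$-norm $<1$ and scale back, which yields $\inf\le F_2(w)+\varepsilon$ for all $\varepsilon>0$ (attainment of the infimum then follows separately, e.g.\ from coercivity of $F_1$ on the closed affine fiber $d\pi_p^{-1}(w)$, rather than from your boundary argument). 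With the closed-ball reading, your observation that the infimum is attained, and that the first inequality forces the lift of a unit vector to be unit, is a nice bonus consistent with the notion of horizontal vectors used later in the paper.
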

It is clear that $F_{2}(d\pi(v))\leq F_{1}(v)$. The vectors for which the equality holds are said to be
\emph{horizontal} and form the \emph{horizontal cone} of $\pi$. 
The above lemma, the convexity of $F^{2}$ and  Lemma \ref{lemma-properties-fundamentaltensor} imply that 
to be horizontal with respect to the Finsler submersion $\pi$ or to be orthogonal to the fibers of $\pi$  (in the sense of Section \ref{section-finsler-metrics}) are equivalent concepts.  Therefore
 the horizontal cones of $\pi$  are equal  to the orthogonal cones to the fibers of $\pi$.

\begin{lemma}\label{lem:exMin}
Let $\pi:V_1\to V_2$ be a linear submersion, with $V_1,V_2$ vector spaces. If $F_1$ is a Minkowski norm on $V_1$ and $\nu(\pi)_0$  is the subset of  the $F_1$-orthogonal vectors to $\pi^{-1}(0)$, then
\begin{itemize}
\item[(a)] the map $\varphi:=\pi|_{\nu(\pi)_0}:\nu(\pi)_0\to V_2\setminus \{0\} $ is a diffeomorphism which can be extended continuously to zero and it is positive homogeneous,
\item[(b)] the function $F_2(v)=F_1(\varphi^{-1}(v))$ is the unique Minkowski norm on $V_2$ that makes $\pi:(V_1,F_1)\to (V_2,F_2)$ a linear Finsler submersion.
\end{itemize}
\end{lemma}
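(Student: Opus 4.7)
For part (a), my plan is to exploit the strict convexity of $F_{1}^{2}$ to see $\nu(\pi)_{0}$ as the ``minimizer section'' of the family of affine subspaces $\pi^{-1}(w)$. More precisely, for $w\in V_{2}\setminus\{0\}$, the restriction of $F_{1}^{2}$ to $\pi^{-1}(w)$ is smooth, strictly convex and coercive, so it has a unique minimizer $v(w)$. By Lemma~\ref{lemma-properties-fundamentaltensor}(c), the first order condition characterizing $v(w)$ is $g_{v(w)}(v(w),u)=0$ for all $u\in\ker\pi$, which is exactly the condition $v(w)\in\nu(\pi)_{0}$. This shows bijectivity of $\varphi$ onto $V_{2}\setminus\{0\}$. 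Positive homogeneity follows from $g_{\lambda v}=g_{v}$ for $\lambda>0$ (Lemma~\ref{lemma-properties-fundamentaltensor}(a)), and the continuous extension with $\varphi(0)=0$ is immediate since $\varphi$ is the restriction of the linear map $\pi$. For smoothness of $\varphi^{-1}$ I would apply the inverse function theorem: by Proposition~\ref{proposition-tnu}, $T_{v}\nu(\pi)_{0}$ is the $g_{v}$-orthogonal complement of $\ker\pi$, so $d\varphi_{v}=\pi|_{T_{v}\nu(\pi)_{0}}$ has kernel $T_{v}\nu(\pi)_{0}\cap\ker\pi=\{0\}$ by positive-definiteness of $g_{v}$; a dimension count then makes $d\varphi_{v}$ an isomorphism.

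For part (b), I would define $F_{2}(w)=F_{1}(\varphi^{-1}(w))$ for $w\neq 0$ and $F_{2}(0)=0$. Smoothness on $V_{2}\setminus\{0\}$ and positive homogeneity of degree $1$ come directly from the corresponding properties of $F_{1}$ and $\varphi$. The submersion property follows from the identity $\pi(B^{F_{1}}(0,1))=B^{F_{2}}(0,1)$: one inclusion uses that $F_{2}(\pi(v))=F_{1}(\varphi^{-1}(\pi(v)))\le F_{1}(v)$ by the minimization property of $\varphi^{-1}$, and the other uses $\pi(\varphi^{-1}(w))=w$ with $F_{1}(\varphi^{-1}(w))=F_{2}(w)$. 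Uniqueness of $F_{2}$ is then automatic, since a Minkowski norm is determined by its unit ball.

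The main technical hurdle is verifying that $F_{2}$ really is a Minkowski norm, i.e.\ that its fundamental tensor is positive definite. Let $v=\varphi^{-1}(w)$, write $H_{v}=T_{v}\nu(\pi)_{0}$, and for $u\in V_{2}$ let $\tilde u\in H_{v}$ be the unique horizontal lift with $\pi(\tilde u)=u$ (which exists by part (a)). My plan is to establish the formula
\[
g^{F_{2}}_{w}(u_{1},u_{2})=g^{F_{1}}_{v}(\tilde u_{1},\tilde u_{2}),
\]
from which positive-definiteness of $g^{F_{1}}_{v}$ immediately transfers to $g^{F_{2}}_{w}$. To derive it, I would set $v(t,s)=\varphi^{-1}(w+tu_{1}+su_{2})$ and differentiate $F_{2}^{2}(w+tu_{1}+su_{2})=F_{1}^{2}(v(t,s))$ twice at $(0,0)$. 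Using Lemma~\ref{lemma-properties-fundamentaltensor}(c), one obtains
\[
\tfrac{1}{2}\partial_{t}\partial_{s}F_{1}^{2}(v(t,s))\big|_{(0,0)}=C_{v}^{F_{1}}(\tilde u_{1},v,\tilde u_{2})+g^{F_{1}}_{v}(\tilde u_{1},\tilde u_{2})+g^{F_{1}}_{v}(v,\partial_{ts}v(0,0)).
\]
The Cartan tensor term vanishes by the same homogeneity identity used in Proposition~\ref{proposition-tnu} ($C_{v}(v,\cdot,\cdot)=0$), and the last term vanishes because $\pi\circ v(t,s)=w+tu_{1}+su_{2}$ is linear in $(t,s)$, forcing $\partial_{ts}v(0,0)\in\ker\pi$, combined with $v\in\nu(\pi)_{0}$ (so $g^{F_{1}}_{v}(v,\cdot)$ annihilates $\ker\pi$). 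The remaining term is exactly $g^{F_{1}}_{v}(\tilde u_{1},\tilde u_{2})$, finishing the proof.
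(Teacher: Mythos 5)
Your proposal is correct. For part (a) it is essentially the paper's own argument: uniqueness of the $F_1$-minimizer on each fibre $\pi^{-1}(w)$ (the paper phrases this via strong convexity of the indicatrix and the tangency characterization of $\nu(\pi)_0$, you via strict convexity of $F_1^2$ on the affine fibre, which is the same mechanism), followed by the inverse function theorem using Proposition~\ref{proposition-tnu} to see that $d\varphi_v$ has trivial kernel. For part (b), however, you take a genuinely different route at the key point, the strong convexity of $F_2$: the paper invokes the characterization of strong convexity through the positive definiteness of the second fundamental form of the indicatrix with respect to the opposite of the position vector \cite[Theorem 2.14 (iv)]{JavSan14}, lifts curves of $\mathcal{I}_2$ to $\mathcal{I}_1$ and shows $\sigma^\xi(\rho'(0),\rho'(0))=\tilde\sigma^{\tilde\xi}(\tilde\rho'(0),\tilde\rho'(0))$; you instead compute the fundamental tensor of $F_2$ directly by differentiating $F_2^2(w+tu_1+su_2)=F_1^2(v(t,s))$, killing the Cartan term by $C_v(\cdot,v,\cdot)=0$ (the same homogeneity identity as in Proposition~\ref{proposition-tnu}; note that with the paper's normalization the Cartan term carries a factor $2$, which is immaterial since it vanishes) and the mixed term by $\partial_{ts}v\in\ker\pi$ together with $v\in\nu(\pi)_0$, obtaining $g^{F_2}_{w}(u_1,u_2)=g^{F_1}_{v}(\tilde u_1,\tilde u_2)$. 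Your identity is precisely the statement that $d\pi$ restricted to the horizontal space is a linear isometry between the fundamental tensors, i.e.\ \cite[Prop. 2.2]{Duran}, which the paper only uses later (in the proof of Proposition~\ref{proposition-metricFS}); so your approach is more self-contained (no appeal to the second-fundamental-form criterion) and yields this stronger statement as a by-product, whereas the paper's argument avoids the two-parameter computation. You also make explicit the unit-ball identity giving the submersion property and the uniqueness of $F_2$, which the paper treats as immediate.
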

\begin{proof}
For part $\rm (a)$, observe that 
\begin{equation}\label{normalvectors}
v\in \nu(\pi)_0 \text{ if and only if $\pi^{-1}(\pi(v))$ is tangent to $\mathcal{I}_1(F(v))$}
\end{equation}
where $\mathcal{I}_1(r)=\{v\in V_1: F_1(v)=r\}$. This is a consequence of  part $(c)$ of Lemma~ \ref{lemma-properties-fundamentaltensor}.
Given $w\in V_2\setminus \{0\}$, let $r=\inf \{ F_1(v): v\in \pi^{-1}(w)\}$. Then by the strong convexity of $\mathcal{I}_1(r)$, there exists a unique $v_r\in V_1$ such that $\pi(v_r)=w$ and $F_1(v_r)=r$, which turns out to be the unique element in $\nu(\pi)_0$ that projects into $w$, as $\pi^{-1}(w)$ is transverse to every $\mathcal{I}_1(r')$ with $r'>r$. This proves that $\varphi$ is one-to-one. As it is the restriction of a smooth map to a smooth submanifold      (see Proposition \ref{proposition-nuP}),    it is smooth. Moreover,      observe that, using Proposition \ref{proposition-tnu}, it follows that the tangent space to $\nu(\pi)_0$ at $v$ is given by the $g_v$-orthogonal space to $\pi^{-1}(w)$,      
and then  $d_v\varphi(u)=0$ for $u\in T_{v}\nu(\pi)_0$ if and only if $u=0$.
Applying the inverse function theorem,  it follows that $\varphi$ is a diffeomorphism. For part $\rm (b)$, it is enough to see that $F_2$ is strongly convex, which is equivalent to prove that the second fundamental form of its indicatrix is positive definite with respect to the opposite to the position vector \cite[Theorem 2.14 (iv)]{JavSan14}. Given an arbitrary curve in $\mathcal{I}_2$, $\rho:[0,1]\rightarrow  \mathcal{I}_2$, where $\mathcal{I}_2$ is the indicatrix of $F_2$, we have that $\tilde\rho=\varphi^{-1}\circ \rho$ is a curve with image in $\mathcal{I}_1$ and 
\begin{eqnarray*}
(\pi \circ \tilde\rho)''(0)&=&\pi(\tilde\rho''(0))=\pi(\nabla^0_{\tilde\rho'(0)}\tilde\rho'(0)) -\tilde\sigma^{\tilde\xi}(\tilde\rho'(0),\tilde\rho'(0))\rho(0),
\end{eqnarray*}
where $\nabla^0$ is the induced connection on $\mathcal{I}_1$ by the affine connection of $V_1$ and $\tilde\sigma^{\tilde\xi}$ is the second fundamental form of $\mathcal{I}_1$ with respect to the opposite to the position vector $\tilde\xi$.      Observe that \eqref{normalvectors} implies that the tangent space to $\mathcal{I}_1$ in a horizontal vector projects into the tangent space to $\mathcal{I}_2$, and then    $\pi(\nabla^0_{\tilde\rho'(0)}\tilde\rho'(0))$ is tangent to $\mathcal{I}_2$. Therefore we conclude that the second fundamental form of $\mathcal{I}_2$ with respect to the opposite of the position vector $\xi=-\rho(0)$ is given by $\sigma^\xi(\rho'(0),\rho'(0))=\tilde\sigma^{\tilde \xi}(\tilde\rho'(0),\tilde\rho'(0))$ and then $\sigma^\xi$ is positive definite.





\end{proof}


\begin{lemma}\label{lem:exMin2}

Let $\pi:M\to B$ be a submersion, 
$F_1$ a Finsler metric on $M$ and $S$ a submanifold of $M$ transversal to the fibers of 
$\pi$ and with the same dimension of $B$ in such a way that 
$\pi|_S:S\to B$ is a diffeomorphism. 
Then

\begin{itemize}
\item[(a)] the subset $\nu(\pi)$ of non zero $F_1$-orthogonal vectors to the fibers of $\pi$ 
is a smooth submanifold of dimension $\dim M+\dim B$  and the restriction of the natural projection $\tau|_{\nu(\pi)}:\nu(\pi)\rightarrow M$ is a submersion, 

\item [(b)] considering at each $x\in S$ the Minkowski norm $(F_2)_{\pi(x)}$ on $T_{\pi(x)}B$ 
obtained in part (b)  of Lemma \ref{lem:exMin} for $d\pi_x:(T_xM,(F_1)_x)\to T_{\pi(x)}B$, 
we get a Finsler metric $\hat F$  in $S$  
given at each $x\in S$ by  $\hat F_x=(F_2)_{\pi(x)}\circ d\pi|_{ TS}$.

\end{itemize}

\end{lemma}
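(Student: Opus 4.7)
My plan is to handle part (a) first, by reducing the orthogonal cone bundle to the annihilator bundle of the vertical distribution via the Legendre transform. Specifically, I will identify
\[
\nu(\pi) \;=\; \mathcal{L}^{-1}\bigl(\mathcal{N}(\pi)\bigr),
\]
where $\mathcal{L}$ is the Legendre transform associated with $\tfrac12 F_{1}^{2}$ and
\[
\mathcal{N}(\pi) \;=\; \{\, \eta\in T^{*}M\setminus\mathbf{0} \;:\; \eta|_{\ker d\pi_{\tau^{*}(\eta)}}=0 \,\}.
\]
Since $\mathcal{L}(v)=g_{v}(v,\cdot)$ by Remark \ref{remark-legrendre-transformation}, this identity says exactly that $v$ is $F_{1}$-orthogonal to the fiber through $\tau(v)$ iff $\mathcal{L}(v)$ kills the vertical subspace. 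As $\pi$ is a submersion, $\ker d\pi$ is a smooth distribution of rank $\dim M-\dim B$, so $\mathcal{N}(\pi)$ (canonically isomorphic to $\pi^{*}T^{*}B\setminus\mathbf{0}$) is a smooth submanifold of $T^{*}M$ of dimension $\dim M+\dim B$. Because $\mathcal{L}$ is a diffeomorphism off the zero section, $\nu(\pi)$ inherits the same smooth structure and dimension. The natural projection $\tau^{*}|_{\mathcal{N}(\pi)}:\mathcal{N}(\pi)\to M$ is a vector bundle projection, hence a submersion, and $\tau|_{\nu(\pi)}=\tau^{*}|_{\mathcal{N}(\pi)}\circ \mathcal{L}|_{\nu(\pi)}$, so $\tau|_{\nu(\pi)}$ is a submersion as well.

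For part (b), Lemma \ref{lem:exMin} applied at each $x\in S$ to the linear submersion $d\pi_{x}:(T_{x}M,(F_{1})_{x})\to T_{\pi(x)}B$ immediately supplies a Minkowski norm $(F_{2})_{\pi(x)}$ on $T_{\pi(x)}B$. The transversality hypothesis and $\dim S=\dim B$ make $d\pi_{x}|_{T_{x}S}$ a linear isomorphism, so that $\hat F_{x}=(F_{2})_{\pi(x)}\circ d\pi_{x}|_{T_{x}S}$ is a Minkowski norm on $T_{x}S$; positivity, positive $1$-homogeneity and positive definiteness of the fundamental tensor are transferred through the isomorphism. To promote these pointwise norms to a smooth Finsler metric on $S$, I will construct a global smooth horizontal lift. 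Consider the smooth map
\[
\varphi:\nu(\pi)\longrightarrow \pi^{*}TB\setminus\mathbf{0},\qquad v\longmapsto \bigl(\tau(v),\,d\pi_{\tau(v)}(v)\bigr).
\]
By part (a), the source of $\varphi$ is smooth of dimension $\dim M+\dim B$, matching the target; by Lemma \ref{lem:exMin}(a), the restriction $\varphi|_{\nu(\pi)_{x}}:\nu(\pi)_{x}\to T_{\pi(x)}B\setminus\{0\}$ is a diffeomorphism for every $x$; and $\varphi$ covers the identity on $M$ with respect to the projections $\tau|_{\nu(\pi)}$ and $\pi^{*}TB\to M$. Using the splittings of $T_{v}\nu(\pi)$ and $T(\pi^{*}TB)$ induced by these two submersions, one concludes that $d\varphi$ is a linear isomorphism at every point, and the inverse function theorem then provides a smooth inverse $\varphi^{-1}$. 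Finally,
\[
\hat F(u)\;=\;F_{1}\bigl(\varphi^{-1}(\tau(u),\,d\pi_{\tau(u)}(u))\bigr),\qquad u\in TS\setminus\mathbf{0},
\]
exhibits $\hat F$ as a composition of smooth maps, hence smooth.

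The main obstacle I expect is precisely this smoothness in (b): Lemma \ref{lem:exMin} gives the norm pointwise, but one must argue that the horizontal lift varies smoothly with the base point, which is why part (a) has to come first. Once $\nu(\pi)$ is known to be a smooth submanifold of $TM$ of the correct dimension, the inverse function theorem argument for $\varphi$ proceeds cleanly, and all the remaining properties of $\hat F_{x}$ follow immediately by linear transport from Lemma \ref{lem:exMin}(b).
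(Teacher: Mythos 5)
Your proposal is correct and takes essentially the same route as the paper: part (a) via the Legendre transform identification of $\nu(\pi)$ with the annihilator bundle, and part (b) by showing the horizontal-lift map is a diffeomorphism through the same verticality/kernel argument combined with the fiberwise statement of Lemma \ref{lem:exMin}. The only (inessential) difference is that you establish this for $\varphi:\nu(\pi)\to\pi^{*}TB\setminus\mathbf{0}$ over all of $M$, whereas the paper restricts to $\nu(\pi)_S$ and maps onto $TB\setminus\mathbf{0}$ using that $\pi|_S$ is a diffeomorphism.
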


\begin{proof}
Let $\mathcal{N}$ be the submanifold of $TM^{*}\setminus \bf 0$ of the annihilators of the 
fibers of $\pi$; see \cite[Proposition 5.2]{Duran}. Then
 $\nu(\pi)=\mathcal{L}^{-1}(\mathcal{N})$, where
$\mathcal{L}(v)=g_{v}(v,\dot)$ is the Legrendre transformation $\mathcal{L}:TM\setminus {\bf 0}\to TM^{*}\setminus {\bf 0}$  associated with $\frac{1}{2}F_{1}^{2}$ and therefore it is a submanifold of $TM$ of dimension $\dim M+\dim B$ (the same as the annihilator); see
\cite[Proposition 2.4]{Duran} and Remark \ref{remark-legrendre-transformation}. Moreover, as the Legendre transformation is a global diffeomorphism  that preserves the fibers and $\tau^*|_{\mathcal N}:{\mathcal N}\rightarrow M$ is a submersion, being $\tau^*:TM^*\rightarrow M$ the canonical projection, it follows that $\tau|_{\nu(\pi)}:\nu(\pi)\rightarrow M$ is also a submersion.

In order to prove part (b), it is enough to prove that $\hat F$ is smooth. 
Let $\nu(\pi)_S=\{v\in\nu(\pi): \tau(v)\in S\}$.   
 Note that $\nu(\pi)_S=\mathcal{L}^{-1}(\mathcal{N}\cap (\tau^*)^{-1}(S))$. Therefore 
$\nu(\pi)_S$ is a submanifold and in fact  a subbundle of $TM_S=\tau^{-1}(S)$. 
As in part $(a)$, $\tau|_{\nu(\pi)_S}:\nu(\pi)_S\rightarrow S$ is a submersion. This implies that $\dim \nu(\pi)_S=2\dim(B)$ and then the only vertical vectors which are tangent to $\nu(\pi)_S$ are those tangent to $\nu(\pi)_S\cap T_pM$ for some $p\in M$. Now observe that from   Lemma \ref{lem:exMin}, it follows that $\varphi_S:=d\pi|_{\nu(\pi)_S}: \nu(\pi)_S\to TB\setminus \bf 0$  is one-to-one and we only have to prove that it is in fact a diffeomorphism, or equivalently, that $d\varphi_{S}(u)=0$ if only if $u=0$. Observe that if $d\varphi_{S}(u)=0$, then $u$ is vertical, but by the above observation, it is tangent   to $\nu(\pi)_S\cap T_pM$ for some $p\in M$. Lemma \ref{lem:exMin} implies that $u=0$ as required.

\end{proof}

We end this section with some remarks about Randers submersions. For the next results let us denote by $F_{W}$ the Finsler metric whose indicatrix is the translation of the indicatrix of a Finsler metric $F$ with a  vector field $W$  such that $F(-W)<1$ on the whole manifold. 

\begin{lemma}
\label{lemma-translation}
Let $\pi:(M,\tilde F)\rightarrow (B,F)$ be a Finsler submersion, $W$ a vector field in $B$ and $\widetilde{W}$ a  vector field in $M$ that projects onto $W$, i.e., 
$d\pi(\widetilde{W})=W$. Then $\pi:(M,{\tilde F}_{\widetilde{W}})\rightarrow (B,F_W)$ is also a Finsler  submersion.
\end{lemma}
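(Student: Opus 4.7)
The plan is to recast the Finsler submersion condition in terms of unit balls, and then exploit the fact that the $W$-modification of a Finsler metric amounts to translating its unit ball by $W$.

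First, I would recall that, by definition, $\pi:(M,\tilde F)\to(B,F)$ being a Finsler submersion means
\begin{equation*}
d\pi_{p}\bigl(B_{p}^{\tilde F}(0,1)\bigr)=B_{\pi(p)}^{F}(0,1)\qquad \forall p\in M,
\end{equation*}
so the task reduces to verifying the analogous identity for $\tilde F_{\widetilde W}$ and $F_{W}$.

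Second, I would establish the key translation identity
\begin{equation*}
B_{p}^{F_W}(0,1)\;=\;B_{p}^{F}(0,1)+W(p),\qquad B_{p}^{\tilde F_{\widetilde W}}(0,1)\;=\;B_{p}^{\tilde F}(0,1)+\widetilde W(p).
\end{equation*}
Since by construction the indicatrix satisfies $\mathcal I^{F_W}_{p}=\mathcal I^{F}_{p}+W(p)$, the convex region enclosed by the translated indicatrix is exactly the translate of the region enclosed by the original one; the hypothesis $F(-W)<1$ ensures that $0$ lies in the interior of this translate, so its Minkowski functional is genuinely $F_W$. The same argument applies upstairs, since the well-definedness of $\tilde F_{\widetilde W}$ forces $\tilde F(-\widetilde W)<1$ (and in any case this implies $F(-W)\le\tilde F(-\widetilde W)<1$ via the infimum characterization of Finsler submersions).

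Third, combining the pieces is a one-line computation using linearity of $d\pi_{p}$, together with $d\pi(\widetilde W)=W$ and the assumption that $\pi$ is a Finsler submersion for $(\tilde F,F)$:
\begin{equation*}
d\pi_{p}\bigl(B_{p}^{\tilde F_{\widetilde W}}(0,1)\bigr)=d\pi_{p}\bigl(B_{p}^{\tilde F}(0,1)\bigr)+d\pi_{p}(\widetilde W(p))=B_{\pi(p)}^{F}(0,1)+W(\pi(p))=B_{\pi(p)}^{F_W}(0,1),
\end{equation*}
which is exactly the Finsler submersion property for $\tilde F_{\widetilde W}$ and $F_{W}$.

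I do not foresee any real obstacle. The only step deserving a brief justification is the translation identity for the unit balls, and that is a purely convex-geometric observation stemming directly from the definition of $F_W$ via a translated indicatrix together with the condition that keeps the origin inside.
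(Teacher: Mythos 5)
Your proposal is correct and follows essentially the same route as the paper: both reduce the claim to the observation that the unit balls of $\tilde F_{\widetilde W}$ and $F_W$ are the translates of those of $\tilde F$ and $F$ by $\widetilde W$ and $W$, and then conclude by linearity of $d\pi_p$ together with $d\pi(\widetilde W)=W$ and the submersion hypothesis. Your extra remarks justifying the translation identity and the condition $F(-W)<1$ are fine but not a different argument.
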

\begin{proof}
As $\pi:(M,\tilde F)\rightarrow (B,F)$ is a Finsler  submersion,  
\begin{equation}\label{hypo}
d\pi_p(B_{{\tilde F}_p}(0,1))=B_{F_{\pi(p)}}(0,1).
\end{equation}
 Observe that $B_{({\tilde  F}_{\widetilde{W}})_p}(0,1)=B_{{\tilde F}_p}(0,1)+\widetilde{W}$ and $B_{(F_W)_{\pi(p)}}(0,1)=B_{F_{\pi(p)}}(0,1)+W$. By the linearity of $d\pi$, equation \eqref{hypo} and the relation $d\pi(\widetilde{W})=W$, we conclude that $d\pi(B_{({\tilde F}_{\widetilde{W}})_p}(0,1))=B_{(F_W)_{\pi(p)}}(0,1)$ as required.
\end{proof}

We will say that the Finsler submersion $\pi:(M,{\tilde F}_{\widetilde{W}})\rightarrow (B,F_W)$ is \emph{a translation of} $\pi:(M,\tilde F)\rightarrow (B,F)$.

\begin{proposition}
\label{proposition-translation-of-submersion}
Let $\pi:(M,Z_{1})\rightarrow (B,Z_{2})$ be a submersion between Finsler manifolds with $Z_1$ a Randers metric. Then $\pi$ is Finsler if and only if  $Z_2$ is a Randers metric and $\pi$  is the translation of a Riemannian submersion.
\end{proposition}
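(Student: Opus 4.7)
The ``if'' direction should be immediate from Lemma~\ref{lemma-translation}: if $\pi:(M,h_1)\to(B,h_2)$ is a Riemannian submersion (equivalently, a Finsler submersion between the associated Riemannian Finsler metrics) and $d\pi(W_1)=W_2$, then translating both indicatrices by $W_1,W_2$ produces exactly the Randers metrics $Z_1,Z_2$ (via Lemma~\ref{lemma-Randes-equivalence}) and preserves the Finsler submersion property.

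For the converse I plan to read off $h_2$ and $W_2$ on $B$ by pushing the Zermelo decomposition of $Z_1$ through $d\pi_p$. Let $(h_1,W_1)$ denote the Zermelo data of $Z_1$, so that $B_{(Z_1)_p}(0,1)=B_{(h_1)_p}(0,1)+W_1(p)$. Applying the linear submersion $d\pi_p$ and using the Finsler submersion hypothesis,
\[
B_{(Z_2)_{\pi(p)}}(0,1)=E_p+d\pi_p\bigl(W_1(p)\bigr),
\]
where $E_p:=d\pi_p\bigl(B_{(h_1)_p}(0,1)\bigr)\subset T_{\pi(p)}B$ is a symmetric ellipsoid, being the linear image of a centred ellipsoid under a surjection. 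Since the left-hand side depends only on $b=\pi(p)$, and a translated centred ellipsoid uniquely determines both its centre of symmetry (the translation vector) and the underlying centred ellipsoid, I would conclude that both $E_p$ and $d\pi_p(W_1(p))$ are constant on the fibre $\pi^{-1}(b)$. This defines a vector field $W_2$ on $B$ satisfying $W_2\circ\pi=d\pi\circ W_1$, together with a Riemannian metric $h_2$ on $B$ whose unit ball at $b$ is $E_p$; smoothness of both will follow either from the explicit Randers-to-Zermelo formulas in Lemma~\ref{lemma-Randes-equivalence} applied to the smooth metric $Z_2$, or by composing with a smooth local section of $\pi$. Since $0\in\mathrm{int}\,B_{(Z_2)_b}(0,1)$, the translation vector sits inside the ellipsoid, i.e.\ $h_2(W_2,W_2)<1$, so $Z_2$ is Randers with Zermelo data $(h_2,W_2)$. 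By construction $d\pi_p(B_{(h_1)_p}(0,1))=B_{(h_2)_{\pi(p)}}(0,1)$, so $\pi:(M,h_1)\to(B,h_2)$ is a Riemannian submersion whose translation by $W_1$ is precisely $\pi:(M,Z_1)\to(B,Z_2)$.

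The main obstacle will be the rigidity step disentangling the ``Riemannian'' part $E_p$ from the ``wind'' part $d\pi_p(W_1(p))$ given only their sum: once one observes that the centre of symmetry of a translated centred ellipsoid is unique (so that knowing only the translated ellipsoid $E_p+d\pi_p(W_1(p))$ recovers each piece), the fibre-constancy of $E_p$ and $d\pi_p(W_1(p))$ follows, and the remaining verifications of smoothness, of the Zermelo inequality $h_2(W_2,W_2)<1$, and of the description of $\pi$ as a translation of a Riemannian submersion become routine.
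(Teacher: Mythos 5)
Your argument is correct, and its overall strategy coincides with the paper's: both proofs work pointwise at the linear level, pushing the Zermelo decomposition of $Z_1$ through $d\pi_p$ so that the base metric is forced to be Randers with Zermelo data ``pushforward metric plus projected wind'', which simultaneously gives projectability of $W_1$ and, after un-translating, the Riemannian submersion. The difference lies in how the identification of the Zermelo data of $(Z_2)_{\pi(p)}$ is carried out. The paper first builds $\metriczermelo_2$ by transporting $\metriczermelo_1$ from the $\metriczermelo_1$-orthogonal complement of the fibre, applies Lemma \ref{lemma-translation} to see that $d\pi_p$ is a linear Finsler submersion onto the Zermelo norm with data $(\metriczermelo_2,d\pi_p(W_1))$, and then invokes the uniqueness clause of part (b) of Lemma \ref{lem:exMin} to conclude that this norm must be $(Z_2)_{\pi(p)}$; you instead work directly with the unit-ball identity $B_{(Z_2)_{\pi(p)}}(0,1)=d\pi_p\bigl(B_{(\metriczermelo_1)_p}(0,1)\bigr)+d\pi_p(W_1(p))$ and use the elementary convex-geometry rigidity that a translated centred ellipsoid determines both its centre of symmetry and the centred ellipsoid (uniqueness of the centre of symmetry of a bounded convex body). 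This replaces the paper's appeal to Lemma \ref{lem:exMin}(b) by a self-contained linear-algebra/convexity observation, and it yields fibre-constancy of both pieces in one stroke; the price is that you must separately note that the image of a centred Euclidean ball under a surjective linear map is again a Euclidean ball (which is exactly the paper's construction of $\metriczermelo_2$ via the orthogonal complement), and that $d\pi_p(B_{\metriczermelo_1}(0,1))=B_{\metriczermelo_2}(0,1)$ is equivalent to $d\pi_p$ being a linear Riemannian submersion. Your remarks on smoothness (via the explicit formulas of Lemma \ref{lemma-Randes-equivalence} or a local section) and on $\metriczermelo_2(W_2,W_2)<1$ close the remaining gaps, so the proposal is a complete, slightly more elementary variant of the paper's proof.
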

\begin{proof}
Let us first show that if $\pi:(M,Z_1)\rightarrow (B,Z_2)$ is Finsler, then $Z_2$ is Randers.  
Let $(\metriczermelo_1,W_1)$ be  the Zermelo data of $Z_1$.  Given $p\in M$, let $Q$ be the $\metriczermelo_1$-orthogonal subspace to the fibers in $T_pM$. Then $\psi=d\pi|_Q:Q\to T_{\pi(p)}B$ is a linear isomorphism. 
Consider on $T_{\pi(p)}B$ the metric  $\metriczermelo_2=(\psi^{-1})^{*}(\metriczermelo_1|_{Q\times Q})$. Then
$d\pi_{p}:(T_{p}M,\, \metriczermelo_{1}) \to (T_{\pi(p)}B,\metriczermelo_2)$ is a linear Riemannian submersion. From
Lemma \ref{lemma-translation}, $d\pi_{p}:(T_{p}M,\, Z_{1}) \to (T_{\pi(p)}B,\widetilde{Z}_2)$ is a linear Finsler submersion,
where $\widetilde{Z}_{2}$ is the Randers metric with Zermelo data $(\metriczermelo_2,W_2)$, for $W_2=d\pi_{p}(W_1)$. Now the unicity of  part (b) of Lemma  \ref{lem:exMin} implies
that $\widetilde{Z}_{2}$ is $(Z_{2})_{\pi(p)}$, and hence $Z_2$ is the Randers metric with Zermelo data $(\metriczermelo_2,W_2)$. Therefore $W_1$ is projectable. Applying Lemma \ref{lemma-translation} with  $-W_1$ and $-W_2$,  it follows that $\pi:(M,\metriczermelo_1)\to (B,\metriczermelo_2)$ is a Riemannian submersion as required.
 The converse follows immediately from Lemma \ref{lemma-translation}.

\end{proof}


\begin{example}
\label{remark-clifford-foliation}
\emph{Construction of        (non-Riemannian) singular Finsler foliations      which are non-homogeneous:} in \cite{RadeschiClifford} Radeschi constructed a polynomial map $\pi_{C}: \mathbb{S}^{2l-1}\to \mathbb{R}^{m+1}$
so that, for  $l> m+1 $ and $m\neq 1,2, 4$, the 
 preimage of $\pi_{C}$ are (non-homogeneous) leaves of  a singular Riemannian
foliation on $(\mathbb{S}^{2l-1},\F_{C})$ 
whose leaf space is  the disk $\mathbb{D}_{C}$ (i.e., the image of $\pi_{C}$).
Consider a small smooth radial   vector field $W_d$ which is zero at the center and near  $\partial \mathbb{D}_{C}$. Let $W$ be a  basic  vector field on $\mathbb{S}^{2l-1}$ that projects to $W_d$.
Set        $Z$      the Zermelo metric on $\mathbb{S}^{2l-1}$ with Zermelo data $(\metriczermelo_{1}, W)$, where 
 $\metriczermelo_{1}$ is the round metric
of constant sectional curvature 1. Then it follows from
Proposition \ref{proposition-translation-of-submersion}  that 
 $\F_{C}$ turns        out      to be a singular Finsler foliation on    
$(\mathbb{S}^{2l-1},       Z     )$, which, by Lemma \ref{lemma-isometryRanders}, has  non-homogeneous leaves. 
\end{example}

\begin{remark}
\label{remark-examples-singular-finsler-foliation}
As remarked in Proposition \ref{proposition-foliation-submersion}, if $\F$ is a singular Riemannian foliation  on a Riemannian manifold $(M,g)$, and if there exists a Finsler metric $F$ such that
$\F$ restricted to each stratum is a (regular) Finsler foliation, then $\F$ is a singular Finsler foliation on $(M,F)$. This gives us another way to contruct examples. In particular if we start with a singular Riemannian foliation $\F$ on $(M, \metriczermelo )$ and an $\F$-foliated vector field $W$, then  $\F$ 
turns to be a singular Finsler foliation on the Randers space $(M,       Z     )$, with Zermelo data  $(\metriczermelo, W)$. 
Also Proposition \ref{proposition-foliation-submersion} 
 and suspension of homomorphism allow us to construct examples of singular Finsler foliations with non closed leaves; see \cite[Chapter 5]{AlexBettiol}.
\end{remark}


\section{Some properties of Singular Finsler foliations}
\label{section-facts-SFF}

In this section we sum up a few properties about  singular Finsler foliations. 
Their Riemannian counterparts can be found in \cite{AlexRafaelDirk} and \cite{Molino}.

\begin{definition}
Let $(M,\F=\{L\})$ be a singular foliation and $F$ a Finsler metric on $M$. 
We will say that an open subset $P_q$ of a leaf $L_q$ is a \emph{plaque} if it is relatively compact  and connected.  In this case, it is possible to consider a tubular neighborhood $U$ of $P_q$,
which we will assume to be relatively compact \cite{NotaViz}.
Recall that $U$ is called a \emph{tubular neighborhood} (of radius $\epsilon$)
if $\exp$ sends 
$\nu(P_q)\cap F^{-1}((0,\epsilon))$
diffeomorphically to  $U\setminus P_{q}$,  and all the orthogonal unit geodesics from the plaque minimize the distance from the plaque at least in the interval $[0,\epsilon]$.  When necessary, we will denote it as $\Om(P_q,\epsilon)$. Observe that if  $\Om(P_q,\epsilon)$ of $P_q$ is a tubular neighborhood then $\Om(P_q,\epsilon')$  is also a tubular neighborhood of $P_q$ for every $\epsilon'<\epsilon$. For every $x\in U$, we will denote by $P_x$ the plaque obtained as the connected component of $L_x\cap U$ which contains $x$. 
\end{definition}
\begin{remark}
Note that the plaque $P_q$ does not have to be a ``small'' submanifold, but it could be even a compact submanifold. We will need to reduce the plaque when we stress the differential structure, see for example part $(iv)$ of Proposition~\ref{proposition-submer}.
\end{remark}
\begin{remark}\label{reversetubes}
  For every point $x\in U$ in a tubular neighborhood $U$ (resp. a reverse tubular neighborhood $\tilde{U}$),  there exists a unique geodesic  $\gamma^x_+:[0,r_1]\rightarrow U$  from $P_q$ to $x$ (resp. $\gamma^x_-:[-r_2,0]\rightarrow \tilde U$ from $x$ to $P_q$)   which is orthogonal to $P_q$.
It is also possible to consider a reverse tubular neighborhood $\tilde U$ of $P_q$, namely, a tubular neighborhood for the reverse metric $\tilde F$, defined as $\tilde F(v)=F(-v)$ for $v\in TM$, with analogous properties.  When convenient, we will denote it by $\tilde \Om(P_x,\epsilon)$ in analogy with the straight case. 
\end{remark}

\begin{definition}
Let $\F=\{L\}$ be a singular  foliation on a  Finsler manifold $(M,F)$, $P_{q}$ be a plaque, and $U$ a tubular neighborhood of $P_q$ . 
Then the function $f_+:U\to [0,+\infty)$ given by $f_{+}(x):=d(P_{q},x)$ is continuous on its domain and smooth on $U\setminus P_q$. Analogously, if $\tilde U$ is a reverse  tubular  neighborhood of $P_q$, $f_-:\tilde U\to [0,+\infty)$, given by $f_{-}(x)=d(x,P_{q})$, is continuous on its domain and smooth on $\tilde U\setminus P_q$. 
Moreover, define the
\emph{future (resp. past)} cylinder  $\mathcal{C}^{+}_{r}(P_{q})$ (resp. $\mathcal{C}^{-}_{r}(P_{q})$) with axis $P_{q}$  as the level set  of $f_{+}$ (resp. $f_{-}$) for $r\in {\rm Im}(f_+)\setminus \{0\}$ (resp. $r\in {\rm Im}(f_-)\setminus\{0\}$). 
In other words $\mathcal{C}^{+}_{r}(P_{q})=f_{+}^{-1}(r)$ and $\mathcal{C}^{-}_{r}(P_{q})=f_{-}^{-1}(r)$. As $U$ and $\tilde U$ are tubular neighborhoods, $\mathcal{C}^{+}_{r}(P_{q})$ and $\mathcal{C}^{-}_{r}(P_{q})$ are smooth hypersurfaces.
\end{definition}
\begin{lemma}\label{remark-gamma-orthogonal-to-tubes}
With the above notation, given a plaque $P_q$ and a point $x$ in a future cylinder $\mathcal{C}^{+}_{r_1}(P_{q})$ (resp.  past cylinder $\mathcal{C}^{-}_{r_2}(P_{q})$), there exists a unique segment of geodesic $\gamma^x_+:[0,r_1]\to U$ (resp.  $\gamma^x_-:[-r_2,0]\to \tilde U$) from $P_q$ to $x$ (resp. from $x$ to $P_q$)  orthogonal to $P_q$ and $\mathcal{C}^{+}_{r_1}(P_{q})$ (resp. $\mathcal{C}^{-}_{r_2}(P_{q})$).
\end{lemma}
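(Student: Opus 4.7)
The plan is to derive existence and uniqueness of $\gamma_+^x$ directly from the diffeomorphism built into the definition of a tubular neighborhood, and then to obtain orthogonality to the cylinder from the Finslerian first variation of length.

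For existence and uniqueness: since $U=\Om(P_q,\epsilon)$ is a tubular neighborhood, $\exp$ restricts to a diffeomorphism from $\nu(P_q)\cap F^{-1}((0,\epsilon))$ onto $U\setminus P_q$. Hence there is a unique $v\in\nu(P_q)$ with $F(v)<\epsilon$ and $\exp(v)=x$, and the unit-speed reparametrization $\gamma_+^x(t)=\exp(tv/F(v))$ is the sought normal geodesic from $P_q$ to $x$. Because orthogonal geodesics in a tubular neighborhood minimize the distance to the plaque, $F(v)=d(P_q,x)=f_+(x)=r_1$, so $\gamma_+^x$ is defined exactly on $[0,r_1]$.

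To get orthogonality to the cylinder at $x$, take $u\in T_x\mathcal{C}^+_{r_1}(P_q)$ and a smooth curve $\alpha:(-\delta,\delta)\to \mathcal{C}^+_{r_1}(P_q)$ with $\alpha(0)=x$, $\alpha'(0)=u$. The tubular neighborhood diffeomorphism gives a unique $w(s)\in \nu^1(P_q)$ with $\exp(r_1 w(s))=\alpha(s)$, depending smoothly on $s$. Thus $\Gamma(s,t)=\exp(t w(s))$ is a smooth variation of $\gamma_+^x$ through unit-speed normal geodesics from $P_q$ to $\mathcal{C}^+_{r_1}(P_q)$, each of length exactly $r_1$. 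The Finslerian first variation of length (see e.g. \cite[\S 5.1]{BaChSh00}) yields
\[
0=\frac{d}{ds}\bigg|_{s=0} L(\Gamma(s,\cdot))=g_{\dot\gamma_+^x(r_1)}(\dot\gamma_+^x(r_1),u)-g_{\dot\gamma_+^x(0)}(\dot\gamma_+^x(0),V),
\]
where $V=\partial_s\Gamma(0,0)\in T_{\gamma_+^x(0)}P_q$. Since $\dot\gamma_+^x(0)\in\nu(P_q)$, the second term vanishes; as $u$ was arbitrary, $\dot\gamma_+^x(r_1)$ is $g_{\dot\gamma_+^x(r_1)}$-orthogonal to $\mathcal{C}^+_{r_1}(P_q)$.

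The past-cylinder case follows by applying exactly the same argument inside the reverse tubular neighborhood $\tilde U$ using the reverse metric $\tilde F$, and then reversing parametrization to recover a geodesic $\gamma_-^x:[-r_2,0]\to \tilde U$ of $F$. No major obstacle is expected: the only point requiring attention is checking that the variation $\Gamma(s,\cdot)$ has \emph{constant} length $r_1$ (because each $w(s)$ is a unit normal), so that the left-hand side of the first variation formula is honestly zero rather than merely the derivative at a critical point.
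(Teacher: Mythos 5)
Your argument is correct, and for the existence/uniqueness part it is essentially the paper's: both of you read off from the tubular-neighborhood structure that there is a unique normal vector $v$ (equivalently, a unique minimizing geodesic) with $\exp(v)=x$ and $F(v)=d(P_q,x)=r_1$. Where you diverge is the orthogonality to the cylinder. The paper observes that, since every point of $\mathcal{C}^+_{r_1}(P_q)$ is at distance $r_1$ from $P_q$, the geodesic $\gamma^x_+$ also minimizes the distance from $P_q$ to the cylinder, and then simply cites \cite[Remark 2.1 (ii)]{CaJaMa}, which says that a geodesic realizing the distance between two submanifolds is orthogonal to both at its endpoints; orthogonality to $P_q$ comes from the same citation. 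You instead make the argument self-contained: orthogonality to $P_q$ is automatic from the construction ($\dot\gamma^x_+(0)\in\nu(P_q)$), and orthogonality to the cylinder follows from the first variation of arc length applied to the variation $\Gamma(s,t)=\exp(t\,w(s))$ through unit normal geodesics, where $w(s)\in\nu^1(P_q)$ is obtained by inverting the tubular-neighborhood diffeomorphism along a curve $\alpha$ in the cylinder; the point you flag — that $F$ of the normal vector over $\alpha(s)$ is constantly $r_1$, so the lengths are constant and the derivative of length vanishes identically — is exactly the same level-set fact ($f_+\equiv r_1$ on the cylinder plus the minimization property of normal geodesics) that the paper uses to say $\gamma^x_+$ minimizes the distance to the cylinder. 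So the two proofs hinge on the same minimality principle; the paper's is a one-line reduction to an external result, while yours trades that citation for an explicit variation and the boundary-term formula of \cite[\S 5.1]{BaChSh00}, which also makes visible why only the cylinder endpoint needs an argument. Your treatment of the past cylinder via the reverse metric matches the paper's.
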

\begin{proof}
Let us prove the result for future cylinders as for past cylinders the proof is analagous using the reverse metric. If $x\in \mathcal{C}^{+}_{r_1}(P_{q})$, in particular $x$ belongs to the tubular neighborhood $U$, and then there exists a unique minimizing geodesic $\gamma^x_+$ from $P_q$ to $x$ of length equal to $r_1$. By the definition of  $\mathcal{C}^{+}_{r_1}(P_{q})$, $\gamma^x_+$ also minimizes the distance from $P_q$ to $\mathcal{C}^{+}_{r_1}(P_{q})$. Moreover, 
by \cite[Remark 2.1 (ii)]{CaJaMa}, it follows that $\gamma^x_+$ is orthogonal to $P_q$ and $\mathcal{C}^{+}_{r_1}(P_{q})$.
\end{proof}

\begin{definition}
We will say that a singular foliation is \emph{ locally forward (resp. backward) equidistant} if given a plaque $P_q$, a tubular neighborhood $U$ (resp. a reverse tubular neighborhood $\tilde U$) of $P_q$ and a point $x\in U$ (resp. $x\in \tilde U$) which belongs to the future cylinder  $\mathcal{C}^{+}_{r_{1}}(P_{q})$ (resp. the past cylinder $\mathcal{C}^{-}_{r_{2}}(P_{q})$),  then the  plaque
 $P_{x}\subset U$ (resp. $P_x\subset \tilde U$)  is contained in  $ \mathcal{C}^{+}_{r_{1}}(P_{q})$ (resp.  $\mathcal{C}^{-}_{r_{2}}(P_{q})$).
\end{definition}

\begin{lemma}
\label{lemma-equidistant}
A  singular  foliation $\F$  is Finsler if and only if its leaves are locally forward and backward equidistant.
\end{lemma}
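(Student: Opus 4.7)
My plan hinges on identifying, for each $x\in\mathcal{C}^+_{r_1}(P_q)$, the hyperplane $T_x\mathcal{C}^+_{r_1}(P_q)$ with the set $\{u\in T_xM: g_{\dot\gamma^x_+(r_1)}(\dot\gamma^x_+(r_1),u)=0\}$. This identification follows from Lemma~\ref{remark-gamma-orthogonal-to-tubes} (which provides the inclusion $\subseteq$) together with a dimension count, since both subspaces have codimension one. The analogous identification for points of the past cylinder is obtained after replacing $F$ by the reverse metric $\tilde F$, using the identity $\tilde g_v(u,w)=g_{-v}(u,w)$ between the two fundamental tensors.

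For the implication ``$\F$ is Finsler $\Rightarrow$ $\F$ is locally forward and backward equidistant'', I take $x\in\mathcal{C}^+_{r_1}(P_q)$ and invoke condition (b) for the geodesic $\gamma^x_+$, which by Lemma~\ref{remark-gamma-orthogonal-to-tubes} is $F$-orthogonal to $P_q$ at $t=0$; therefore $\gamma^x_+$ is $F$-orthogonal to $L_x$ at $t=r_1$, which by the identification of the first paragraph means $T_xL_x\subset T_x\mathcal{C}^+_{r_1}(P_q)$. Applying the same reasoning at every $y\in P_x$ yields $df_+|_{P_x}\equiv 0$, so by connectedness $f_+$ is constant on $P_x$ with value $r_1$, i.e.\ $P_x\subset\mathcal{C}^+_{r_1}(P_q)$. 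The backward case is handled identically after passing to $\tilde F$: the identity $\tilde g_v(u,w)=g_{-v}(u,w)$ implies that reversing an $F$-orthogonal $F$-geodesic produces a $\tilde F$-orthogonal $\tilde F$-geodesic, so condition (b) still applies and the same argument runs verbatim in $\tilde F$.

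For the converse, let $\gamma:(a,b)\to M$ be an $F$-geodesic with $\dot\gamma(0)$ $F$-orthogonal to $L_{\gamma(0)}$, and fix a plaque $P_q\subset L_{\gamma(0)}$ containing $\gamma(0)$ together with a tubular neighborhood $\Om(P_q,\epsilon)$. For $t\in(0,\epsilon]$ one has $\gamma(t)\in\mathcal{C}^+_t(P_q)$; local forward equidistance then gives $P_{\gamma(t)}\subset\mathcal{C}^+_t(P_q)$, and paragraph one yields that $\dot\gamma(t)$ is $F$-orthogonal to $T_{\gamma(t)}L_{\gamma(t)}$. To propagate orthogonality throughout $(0,b)$, I would restart this local argument at each new orthogonal base point (this is the openness step) and combine it with property (a) of singular foliations, which lets one rewrite orthogonality as the vanishing of $g_{\dot\gamma(t)}(\dot\gamma(t),X_i(\gamma(t)))$ for smooth vector fields $X_i$ whose values span each leaf, so that continuity of the fundamental tensor turns orthogonality into a closed condition in $t$. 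The interval $(a,0]$ is covered by the same argument carried out in $\tilde F$, since local backward equidistance of $\F$ with respect to $F$ is exactly local forward equidistance of $\F$ with respect to $\tilde F$.

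The main obstacle is precisely this last propagation step: because $\F$ is singular, $\dim L_{\gamma(t)}$ may jump along $\gamma$, so the closedness in $t$ of the orthogonality condition is not automatic and forces one to use property (a) of singular foliations to rephrase orthogonality through smooth ambient vector fields before continuity of the fundamental tensor can be applied.
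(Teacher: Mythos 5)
Your proof is correct and follows essentially the same route as the paper: the direction ``Finsler $\Rightarrow$ equidistant'' is exactly the paper's argument (condition (b) plus Lemma~\ref{remark-gamma-orthogonal-to-tubes} give $T_xP_x\subset T_x\mathcal{C}^{+}_{r_1}(P_q)$, hence $f_+$ is constant on the connected plaque, and the backward case is the same in the reverse metric). For the converse, which the paper only asserts via Lemma~\ref{remark-gamma-orthogonal-to-tubes}, your open--closed propagation along the geodesic---openness from restarting the local equidistance argument and closedness from rewriting orthogonality through the leaf-tangent vector fields of property (a) and continuity of the fundamental tensor---is precisely the intended completion and handles the possible jumps in leaf dimension correctly.
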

\begin{proof}

Lemma \ref{remark-gamma-orthogonal-to-tubes}
 allows us to check that if the leaves of $\F$ are locally equidistant,  then $\F$ is a singular Finsler foliation.

Now assume that $\F$ is a singular Finsler foliation, $P_q$ a plaque  and  $U$, a  tubular neighbohood of $P_q$. Given   $x_{0}\in U$, 
since $\F$ is a singular Finsler foliation, 
for each $x\in P_{x_{0}}$,  $\gamma^{x}_{+}$ is orthogonal to $P_{x_{0}}$ at $x$. On the other hand, by   Lemma \ref{remark-gamma-orthogonal-to-tubes}, $\gamma^{x}_{+}$
is also orthogonal to $\mathcal{C}^{+}_{r}(P_{q})$ at $x=\gamma^{x}_{+}(r)$. Therefore $T_{x}P_{x_{0}}\subset T_{x}\mathcal{C}^{+}_{r}(P_{q})$. Since this holds
for each $x\in P_{x_{0}}$, we conclude that $P_{x_{0}}$ must be contained in a future cylinder.  Therefore the leaves of $\F$ are locally forward equidistant.  Analogously, one can check that it is locally backward equidistant using a reverse tubular neighborhood.
\end{proof}

\begin{definition}[Homothetic transformations]
Consider a plaque $P_{q}$ of a singular Finsler foliation $\F$ and $\Om(P_q,\epsilon)$ a tubular neighborhood of $P_{q}$. Then for
each  $\lambda \in (0,1)$  (resp. $\lambda>1$)  we can define the \emph{future homothetic transformation} $h^{+}_{\lambda}:\Om(P_q,\epsilon)\to \Om(P_q,\lambda\epsilon)$  (resp. $h^{+}_{\lambda}:\Om(P_q,\frac{\epsilon}{\lambda})\to \Om(P_q,\epsilon)$)  as 
$h^{+}_{\lambda}(x)=\gamma^{x}_{+}(\lambda r)$ where $\gamma^{x}_{+}(r)=x$.
In particular $h_{\lambda}^{+}:\mathcal{C}^{+}_{r}(P_{q})\to \mathcal{C}^{+}_{\lambda r}(P_{q})$.
In a similar way, given a reverse tubular neighborhood $\tilde \Om(P_q,\epsilon)$ of $P_q$,  we define the \emph{past homothetic transformation } $h^{-}_{\lambda}:\tilde \Om(P_q,\epsilon)\to \tilde \Om(P_q,\lambda\epsilon)$ (resp. 
$h^{-}_{\lambda}:\tilde \Om(P_q,\frac{\epsilon}{\lambda})\to \tilde \Om(P_q,\epsilon)$)
as $h^{-}_{\lambda}(x)=\gamma^{x}_{-}(-\lambda r)$, where  $x=\gamma^{x}_{-}(-r)$. In particular $h_{\lambda}^{-}:\mathcal{C}^{-}_{r}(P_{q})\to \mathcal{C}^{-}_{\lambda r}(P_{q})$.
\end{definition}

\begin{lemma}[Homothetic transformation Lemma]
\label{lemma-homotheticlemma}
The future and past homothetic transformations $h_{\lambda}^{+}$ and $h_{\lambda}^{-}$  
send plaques to plaques of a singular Finsler foliation.
\end{lemma}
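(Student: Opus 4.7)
We focus on $h_\lambda^+$; the argument for $h_\lambda^-$ is identical after applying it with the reverse Finsler metric $\widetilde F$ and a reverse tubular neighborhood of $P_q$. Fix a plaque $P_{x_0}\subset\Om(P_q,\epsilon)$; by Lemma~\ref{lemma-equidistant} it lies in the cylinder $\mathcal{C}_r^+(P_q)$ with $r=d(P_q,x_0)$, so $h_\lambda^+$ is defined on all of $P_{x_0}$. In the exponential chart $\exp\colon\{v\in\nu(P_q):0<F(v)<\epsilon\}\to\Om(P_q,\epsilon)\setminus P_q$ the transformation $h_\lambda^+$ is the radial scaling $v\mapsto\lambda v$, hence a smooth diffeomorphism onto its image. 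Since $P_{x_0}$ is connected and $h_\lambda^+|_{P_{x_0}}$ is a diffeomorphism onto its image, it suffices to check that for every $x\in P_{x_0}$ and every $w\in T_xP_{x_0}$ the vector $(dh_\lambda^+)_x(w)$ is tangent to $L_{h_\lambda^+(x)}$: this will force $h_\lambda^+(P_{x_0})$ to lie in one leaf, and since the map preserves dimension, the image will be open in that leaf, connected, and relatively compact, i.e.\ the plaque $P_{h_\lambda^+(x_0)}$.

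Pick a curve $\alpha\colon(-\delta,\delta)\to P_{x_0}$ with $\alpha(0)=x$ and $\dot\alpha(0)=w$. For each $s$, let $\gamma_s:=\gamma_+^{\alpha(s)}\colon[0,r]\to\Om(P_q,\epsilon)$ be the unit-speed orthogonal geodesic from $P_q$ to $\alpha(s)$, with foot $p(s)=\gamma_s(0)\in P_q$ for $s$ small. The variational field $V(t)=\partial_s\gamma_s(t)|_{s=0}$ of the geodesic variation $(s,t)\mapsto\gamma_s(t)$ satisfies
\[
V(0)=\dot p(0)\in T_{p(0)}P_q,\qquad V(r)=w\in T_xP_{x_0},\qquad V(\lambda r)=(dh_\lambda^+)_x(w),
\]
with both endpoint values tangent to $\F$. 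Thus the lemma reduces to the stronger claim that $V(t)\in T_{\gamma_0(t)}\F$ for every $t\in[0,r]$.

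To establish this, one shows that for each intermediate $t\in(0,r)$ the curve $s\mapsto\gamma_s(t)$ lies in the plaque $P_{\gamma_0(t)}$ for $s$ small. Since $\gamma_s$ is unit-speed and orthogonal to $P_q$, the restriction $\gamma_s|_{[0,t]}$ is the unique orthogonal geodesic from $P_q$ to $\gamma_s(t)$, so $\gamma_s(t)\in\mathcal{C}_t^+(P_q)$; the plaque $P_{\gamma_0(t)}$ also lies in this cylinder by Lemma~\ref{lemma-equidistant}. Once this containment is known, differentiating $s\mapsto\gamma_s(t)\in P_{\gamma_0(t)}$ at $s=0$ yields $V(t)\in T_{\gamma_0(t)}P_{\gamma_0(t)}\subset T\F$, and specializing $t=\lambda r$ completes the proof. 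The main obstacle is precisely the claim that $\gamma_s(t)$ stays in $P_{\gamma_0(t)}$ and not in some neighboring plaque of a different leaf sharing the same cylinder. In the Riemannian setting this rests on the invariance of basic Jacobi fields along orthogonal geodesics: the Jacobi fields along $\gamma_0$ whose boundary values are tangent to $\F$ form a distribution whose values remain $\F$-tangent for every $t$. In the Finsler setting one must reproduce this argument using the Chern connection along $\gamma_0$ together with the compatibility of the fundamental tensor $g_{\dot\gamma_0}$ with the orthogonal splitting; equivalently, one can apply Lemma~\ref{lemma-equidistant} at each intermediate plaque $P_{\gamma_0(t)}$ in its own tubular neighborhood and patch the resulting local statements to force the required containment for all $t\in[0,r]$ simultaneously.
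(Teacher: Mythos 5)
There is a genuine gap, and it sits exactly where you locate it yourself. Your argument reduces the lemma to the claim that, for each intermediate $t\in(0,r)$, the transversal curve $s\mapsto\gamma_s(t)$ of the orthogonal geodesic variation stays in the plaque $P_{\gamma_0(t)}$ of the \emph{same} leaf. But that claim is essentially the statement of the homothetic lemma itself (it says that $h^+_{t/r}$ maps a neighborhood of $x_0$ in $P_{x_0}$ into the leaf of $\gamma_0(t)$), so the reduction is circular unless you prove it by independent means. The two means you point to do not close it: (i) Lemma \ref{lemma-equidistant} only places both $\gamma_s(t)$ and $P_{\gamma_0(t)}$ inside the cylinder $\mathcal{C}^+_t(P_q)$, and a cylinder is in general a hypersurface containing plaques of many different leaves, so ``patching'' equidistance at intermediate plaques gives no control on \emph{which} leaf $\gamma_s(t)$ lands in; (ii) the invariance of ``basic'' Jacobi fields along orthogonal geodesics is, in the singular Riemannian setting, a consequence of the homothetic lemma and equifocality rather than an input to it, and no Finsler analogue (via the Chern connection or otherwise) is available here, especially since $\gamma_0$ may start at, or pass through, singular leaves where the local submersion description of Corollary \ref{equivalencedef} and the transport of Remark \ref{rem:transport} do not apply. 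A secondary, smaller issue: your opening reduction (``if $dh^+_\lambda$ sends $T P_{x_0}$ into vectors tangent to leaves, then the connected image lies in one leaf'') also needs an argument for singular foliations — a curve whose velocity is pointwise tangent to the leaves need not obviously stay in one leaf, and this requires the Stefan--Sussmann-type machinery behind condition (a).

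The paper avoids all of this by a purely metric, Kleiner-type argument. Working first with $\lambda$ close to $1$, it takes a point $x_\lambda$ in the plaque of $h^+_\lambda(x_0)$, uses \emph{backward} equidistance in a reverse tubular neighborhood of $P_{x_0}$ to get a geodesic of length $(1-\lambda)r_0$ from $x_\lambda$ to $P_{x_0}$ orthogonal to $P_{x_0}$, and concatenates it with the forward orthogonal geodesic of length $\lambda r_0$ from $P_q$ to $x_\lambda$. The broken curve joins $P_q$ to $\mathcal{C}^+_{r_0}(P_q)$ with length exactly $r_0$, hence must be an unbroken orthogonal geodesic $\gamma^x_+$ with $x\in P_{x_0}$; this shows $h^+_{1/\lambda}$ sends the nearby plaque back into $P_{x_0}$, yields that $\lambda\mapsto\dim P_{h^+_\lambda(x_0)}$ is locally maximal and hence constant, and gives a local property (P). A separate infimum/connectedness bootstrap (using $h^+_\lambda\circ h^+_\rho=h^+_{\lambda\rho}$ and an open–closed argument on the plaque) then extends (P) to all admissible $\lambda$ and to the whole plaque. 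If you want to salvage your Jacobi-field picture, you would first need this broken-geodesic step (or an equivalent) to establish the intermediate-plaque containment; as written, your proposal assumes the conclusion at its key step.
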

\begin{proof}

We will prove the lemma for the future  homothetic transformation $h_{\lambda}^{+}$. A similar proof is valid for 
the past homothetic transformation.

Let $x_{0}$ be a point in the future cylinder $\mathcal{C}_{r_{0}}^{+}(P_{q})$        with $r_0<\epsilon$.      For each $x\in P_{x_{0}}$ consider
$\gamma^{x}_{+}:[0,r_0]\to U$ the segment of  geodesic defined in Lemma \ref{remark-gamma-orthogonal-to-tubes} minimizing the distance from $P_q$ to $x$.

First we will consider  $\lambda<1$ close enough to 1. In particular, we will assume that there exists a reverse tubular neighborhood $\tilde V$ of $P_{x_0}$  (recall Remark \ref{reversetubes})  and the past cylinder $\mathcal{C}_{r_{2}}^{-}(P_{x_{0}})\subset \tilde V$ for $r_2=(1-\lambda)r_0$. Therefore,  $h_\lambda^+(x_0)\in\tilde V$ and  let ${\tilde P}_{h_{\lambda}^{+}(x_{0})}$ be the plaque of $h_\lambda^+(x_0)$ in $\tilde V$.    
Then,  by Lemma \ref{lemma-equidistant}, 
\begin{equation}
\label{eq2-lemma-homotheticlemma}
{\tilde P}_{h_{\lambda}^{+}(x_{0})}\subset\mathcal{C}_{r_{2}}^{-}(P_{x_{0}}),
\end{equation}
\begin{equation}
\label{eq3-lemma-homotheticlemma}
P_{h_{\lambda}^{+}(x_{0})}\subset\mathcal{C}_{r_{1}}^{+}(P_{q}),
\end{equation}
where $r_{0}=r_{1}+r_{2}$. In other words $r_{2}=(1-\lambda) r_{0}$ and $r_{1}=\lambda r_{0}$. For $x_\lambda\in \tilde P_{h^+_{\lambda}(x_{0})}\cap P_{h^+_{\lambda}(x_{0})}$, there exists  a segment of geodesic $\gamma_{2}$ with unit velocity so that
$\gamma_{2}(r_{1})=x_{\lambda}$, $\gamma_{2}(r_{0})\in P_{x_0}$ and $\dot{\gamma}_{2}(r_{0})$ is orthogonal to $P_{x_{0}}$; see
 (\ref{eq2-lemma-homotheticlemma}) and recall Lemma \ref{remark-gamma-orthogonal-to-tubes}.  Let $\gamma_{1}$ be the segment of geodesic with 
$\gamma_{1}(0)\in P_{q}$, $\gamma_{1}(r_{1})=x_{\lambda}$ and $\dot{\gamma}_{1}(0)$  orthogonal to $P_{q}$; see 
(\ref{eq3-lemma-homotheticlemma})  and  Lemma \ref{remark-gamma-orthogonal-to-tubes}. 
Let $\tilde{\gamma}$ be the (possible broken) geodesic constructed as the concatenation  of $\gamma_1$ with $\gamma_2$. Then we have that $\tilde{\gamma}$ 
 joins   $P_{q}$ to $\tilde\gamma(r_1+r_2)=\tilde\gamma(r_0)\in P_{x_0}\subset {\mathcal C}_{r_0}^+(P_q)$, it
is orthogonal to $P_q$ and has length $r_{0}=r_{1}+r_{2}$. Therefore $\tilde{\gamma}$ must be the geodesic $\gamma^{x}_{+}$, where $x=\tilde\gamma(r_1+r_2)$ and        $h_{\frac{1}{\lambda}}^+(x_\lambda)=x$.      This proves that   $h_{\frac{1}{\lambda}}^+(\tilde P_{h^+_{\lambda}(x_{0})}\cap P_{h^+_{\lambda}(x_{0})})\subset P_{x_0}$. In particular, 
\begin{equation}\label{ineq}
\dim P_{h^+_{\lambda}(x_{0})}\leq \dim P_{x_0}
\end{equation} 
(recall that $h^+_\lambda$ and $h^+_{\frac 1\lambda}$ are        inverse diffeomorphisms).      Proceeding analogously, one can prove that $h_{\frac{1}{\lambda}}^+(\hat P_{h^+_{\lambda}(x_{0})}\cap P_{h^+_{\lambda}(x_{0})})\subset P_{x_0}$ (where $\hat P_{h^+_{\lambda}(x_{0})}$ is the plaque of $h^+_\lambda(x_0)$ in a tubular neighborhood of $P_{x_0}$)  and  \eqref{ineq}  holds for $\lambda>1$ small enough, obtaining that $\dim P_{x_0}$ is a local maximum for the curve $\lambda\to\dim P_{h^+_{\lambda}(x_{0})}$. As $x_0$  is arbitrary, choosing $h_\lambda(x_0)$ as departing point, applying \eqref{ineq}, and taking into account that $h^+_\lambda\circ h^+_\rho=h^+_{\lambda\rho}$, it follows that  $\dim P_{h^+_{\lambda}(x_{0})}$ is a local maximum for every $\lambda$, and then it is constant with respect to $\lambda$.  This implies that  the following  property holds
\begin{itemize}
 \item[(P)] given $\lambda>0$ and $x_0\in U\setminus P_q$, there exists a neighborhood $\Omega\subset P_{x_0}$  of $x_0$ such that   $h^+_\lambda(\Omega)\subset P_{h_{\lambda}^{+}(x_{0})}$, and $h^+_\lambda|_\Omega$ is a diffeomorphism in the image,
\end{itemize} 
holds for every $x_0\in U\setminus P_q$ in $\lambda\in  [\mu,\nu]$ for a certain $\mu<1$  and $\nu\in (1,\epsilon/d(P_q,x_0))$ close enough to $1$  (which,        in principle,      depend on $x_0$).

 Now let us see that the property (P) for a fixed $x_0$ holds for every $\lambda\in (0,1]$. Let $\rho=\inf\{\mu\in (0,1]: \text{ (P) holds for all $\lambda\in [\mu,1]$}\}$. Then if $\rho>0$, there are two possibilities:  either $\rho$ satisfies (P) or not.  In the first case, as $h^+_{\eta \rho}=h^+_\eta\circ h^+_\rho$, and we know that $h^+_\rho(x_0)$ satisfies property (P) for all  $\eta<1$ close enough to $1$, it follows that all $\lambda$ in $[\eta\rho,1]$ also satisfies property (P), but $\eta\rho<\rho$, a contradiction.
Now if $\rho$ does not satisfy (P), consider $\eta>1$ satisfying (P) for $h^+_\rho(x_0)$. Then $h^+_{1/\eta}$ is well-defined in a neighborhood of $h^+_\rho(x_0)$ in $P_{h^+_\rho(x_0)}$         as $1/\eta<1$,       and the composition $h^+_{1/\eta}\circ h^+_{\eta\rho}=h^+_\rho$ shows that $\rho$ satisfies (P). This concludes that (P) holds for all $\lambda\in (0,1]$,  and for $\lambda\in (1,\epsilon/d(P_q,x_0))$, a similar reasoning works. 

Finally, observe that the subset $\mathcal A$ of points in the plaque $P_{x_0}$ which are sent by $h^+_\lambda$ to the plaque $P_{h^+_\lambda(x_0)}$ is open by the property (P). Moreover, given a point in the closure of this subset, property (P) shows that it is in the interior and then $\mathcal A$ is closed. Being the plaque connected, it follows that $\mathcal A$  is the whole plaque $P_{x_0}$ as required.  As this also holds for the inverse $h^+_{1/\lambda}$, we conclude that $h^+_\lambda: P_{x_0}\to P_{h^+_\lambda(x_0)}$ is a diffeomorphism. 
\end{proof}
Let us now review  a useful and standard  lemma. 

\begin{lemma}
\label{lemma-subfoliation}
Let $\F$ be a singular foliation on $M$. Then for $q\in M$ there exists a neighborhood $U$ of $q$, a regular 
foliation $\F^{2}$  on $U$ (i.e., all the leaves of $\F^{2}$  have the same dimension) 
and an embedded submanifold $S_{q}$ (a slice) so that:
\begin{enumerate}
\item[(a)] $\F^{2}$ is a subfoliation of $\F$, i.e.,  for each $x\in U$, the leaf $L^{2}_{x}$ of  $\F^{2}$ through $x$ 
is contained in the leaf $L_{x}$ of $\F$;
\item[(b)] $L^{2}_{q}$ is a relatively compact  open subset of $L_{q}$ (and in particular it has the same dimension  as  $L_{q}$)
\item[c)] $S_q$ is transverse to $\F^{2}$, i.e., $T_{x}M= T_{x}S_{q}\oplus T_{x}L_{x}^{2}$ for each $x\in S_{q}$,  $q\in S_q$ and $S_q$ meets all the plaques of $\F^2$ in $U$. 
\end{enumerate}
 \end{lemma}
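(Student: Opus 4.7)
The plan is to construct $\F^2$ as a local flow-box foliation obtained by flowing a transversal slice along vector fields tangent to $\F$ that span $T_qL_q$.

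Set $k=\dim L_q$. First I would use hypothesis (a) in the definition of singular foliation to pick smooth vector fields $X_1,\ldots,X_k$ tangent to $\F$ (so $X_i(x)\in T_xL_x$ for every $x$) whose values at $q$ form a basis of $T_qL_q$. Next I would choose an embedded $(n-k)$-disk $S_q$ through $q$ with $T_qS_q\oplus T_qL_q=T_qM$. Then consider the map
\[
\Psi(t_1,\ldots,t_k,s)=\phi^{t_1}_{X_1}\circ\cdots\circ\phi^{t_k}_{X_k}(s)
\]
defined for $(t,s)$ near $(0,q)$ in $\RR^k\times S_q$. Its differential at $(0,q)$ sends the standard basis of $\RR^k$ to $X_1(q),\ldots,X_k(q)$ and is the identity on $T_qS_q$, so by the inverse function theorem it is a diffeomorphism from some $V\times S_q$ onto an open neighborhood $U$ of $q$ after shrinking $V$ and $S_q$. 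I then declare $\F^2$ to be the regular $k$-dimensional foliation of $U$ whose plaques are $\Psi(V\times\{s\})$ for $s\in S_q$.

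To establish (a) I would use the standard fact that the flow of a vector field tangent to $\F$ preserves leaves: the integral curve of $X_i$ through any $y$ coincides with the integral curve of the restriction $X_i|_{L_y}$ (a smooth vector field on the immersed leaf $L_y$), hence it remains in $L_y$ by uniqueness of integral curves in $M$; iterating, $\Psi(V\times\{s\})\subset L_s$. For (b), $L^2_q=\Psi(V\times\{q\})\subset L_q$ is a $k$-dimensional submanifold whose tangent space at $q$ equals $T_qL_q$, so it is an open subset of $L_q$; choosing $V$ precompact makes it relatively compact in the intrinsic topology of $L_q$ since the flow map is continuous into $L_q$. For (c), the decomposition $T_xM=T_xS_q\oplus T_xL^2_x$ holds at $x=q$ by construction of $S_q$ and persists in a neighborhood by continuity (shrink $S_q$ if needed); moreover each plaque $\Psi(V\times\{s\})$ meets $S_q$ at $\Psi(0,s)=s$, so $S_q$ meets every plaque of $\F^2$ in $U$.

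The principal obstacle is the leaf-preservation property used in (a): it is the only step that actually uses that $\F$ is a singular foliation in the sense of the paper rather than just any smooth regular one, and it relies on viewing $X_i|_{L_y}$ as an intrinsically smooth vector field on the immersed leaf and then invoking uniqueness of integral curves in $M$. The remainder of the argument is a routine flow-box/inverse function theorem construction together with standard continuity and transversality arguments.
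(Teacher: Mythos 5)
The paper does not prove this lemma at all: it is introduced as ``a useful and standard lemma'' (its Riemannian-flavoured counterparts are in \cite{Molino} and \cite{AlexRafaelDirk}), so there is no in-paper argument to compare against. Your flow-box construction is precisely the standard proof, and it is essentially correct: the composed flows $\Psi$ give a foliated chart, part (a) reduces to leaf-invariance of flows of vector fields tangent to $\F$, and (b), (c) follow as you say. The one place where you should be a bit more careful is exactly the step you flag: uniqueness of integral curves only gives that the ambient integral curve of $X_i$ and the integral curve of $X_i|_{L_y}$ agree on their \emph{common} interval of definition, and a priori the leafwise curve could have a smaller maximal interval; the standard remedy is a connectedness argument in the time parameter (the set of times $t$ with $\phi^t_{X_i}(y)\in L_y$ is open, by the local coincidence you describe applied at $\gamma(t_0)$, and closed, since leaves partition $M$), which yields leaf-invariance for all times in the ambient domain. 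The same leafwise-smoothness of the restricted flows is what makes $t\mapsto\Psi(t,q)$ smooth as a map into $L_q$, which is implicitly used when you conclude that $L^2_q$ is open in $L_q$ and, after taking $V$ precompact, relatively compact in the leaf topology. With that small repair your argument is complete.
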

 \begin{definition}  Given a tubular neighborhood  $\Om(P_q,\epsilon)$ of a plaque $P_q$,
  in a singular Finsler foliation $(M,\F,F)$, we can define the {\em projection map} $\rho:\Om(P_q,\epsilon)\rightarrow P_q$ as $\rho:=h_0^+$, namely, the homothetic transformation $h^+_\lambda$  in the limit case of $\lambda=0$,        or, more precisely, $\rho^+(x)=\gamma^x_+(0)$ for $x\in \Om(P_q,\epsilon)\setminus P_q$ and $h^+(x)=x$ if $x\in P_q$.      Moreover, given $p\in P_q$, we define the {\it Finslerian slice} $\Lambda_p$ as the image by the exponential map of $\nu(P_q)\cap T_{p}M\cap F^{-1}(0,\epsilon)$. 
\end{definition} 
\begin{proposition}\label{proposition-submer}
     Given a point $q\in M$, a plaque $P_q$ of $q$ and a tubular neighborhood $\Om(P_q,\epsilon)$, the following properties hold:
\begin{enumerate}[(i)]
\item for every $p\in P_q$, the Finslerian slice $\Lambda_p$ is transversal to all the plaques that meets, 
\item  the projection map $\rho:\Om(P_q,\epsilon)\rightarrow P_q$ is a  surjective submersion,
\item given any plaque $P_x$ in $\Om(P_q,\epsilon)$, the restriction  $\rho|_{P_x}:P_x\rightarrow P_q$ is a  submersion and, when the leaves of $\F$ are closed, $\rho|_{P_x}$ is surjective, and
\item by reducing $P_q$ and $\epsilon$ if necessary, we can assume that $\rho|_{P_x}$ is surjective.
\end{enumerate}  
\end{proposition}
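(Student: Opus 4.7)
The plan is to prove the four parts in the order (ii), then the submersion halves of (i) and (iii) together, and finally the surjectivity parts of (iii) and (iv).

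For (ii) I identify $\rho$ with the composition $\tau \circ \exp^{-1}$, where $\tau : \nu(P_q) \to P_q$ is the canonical projection (a submersion by Proposition \ref{proposition-nuP}) and $\exp$ is the diffeomorphism from $\nu(P_q) \cap F^{-1}[0,\epsilon)$ onto $\Om(P_q,\epsilon)$ supplied by the tubular neighborhood hypothesis. Off $P_q$ this identification makes $\rho$ a submersion, and on $P_q$ it is the identity; surjectivity is then automatic. The next observation is that (i) and the submersion clause of (iii) are the same statement: since $\rho^{-1}(p) = \{p\} \cup \Lambda_p$, at a point $y \in \Lambda_p \cap P_x$ we have $T_y \Lambda_p = \ker d\rho_y$, so $T_y \Lambda_p + T_y P_x = T_y M$ holds if and only if $d\rho_y|_{T_y P_x}$ is onto $T_p P_q$.

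For the submersion clause of (iii), my main idea is to combine the homothetic transformation lemma \ref{lemma-homotheticlemma} with the regular subfoliation from Lemma \ref{lemma-subfoliation}. The identity $\rho \circ h^+_\lambda = \rho$ together with the fact that $h^+_\lambda : P_x \to P_{h^+_\lambda(x)}$ is a diffeomorphism yields the factorization $\rho|_{P_x} = \rho|_{P_{h^+_\lambda(x)}} \circ h^+_\lambda|_{P_x}$, so $\rho|_{P_x}$ is a submersion at $y$ if and only if $\rho|_{P_{h^+_\lambda(x)}}$ is a submersion at $z := h^+_\lambda(y)$. Driving $\lambda \to 0$ brings $z$ arbitrarily close to $p = \rho(y)$, and I apply Lemma \ref{lemma-subfoliation} at $p$ to obtain in a small neighborhood a regular subfoliation $\F^2 \subset \F$ of dimension $\dim L_q$. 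The $\F^2$-plaque $P^2_z$ through $z$ sits inside $P_{h^+_\lambda(x)}$ with the same dimension as $P_q$, so it is enough to check that $\rho|_{P^2_z}$ is a local diffeomorphism at $z$, equivalently that $T_z \Lambda_p \cap T_z P^2_z = 0$. As $z \to p$ along $\gamma^y_+$, $T_z P^2_z$ converges to $T_p P_q$ while $T_z \Lambda_p$ converges to the $g_{\dot\gamma^y_+(0)}$-orthogonal complement of $T_p P_q$ (Proposition \ref{proposition-tnu}); the two limits are transverse, and upper semicontinuity of the dimension of the intersection of two subspaces then forces $T_z \Lambda_p \cap T_z P^2_z = 0$ for $\lambda$ sufficiently small.

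The surjectivity clause of (iii) under the closed-leaves assumption is a standard clopen argument: $\rho|_{P_x}(P_x)$ is open in $P_q$ by the submersion property, and it is also closed because a sequence $y_n \in P_x$ with $\rho(y_n) \to p$ subconverges, inside the compact set $L_x \cap \overline{\Om(P_q,\epsilon)}$, to a point $y$ that still lies in $\Om(P_q,\epsilon)$ (its distance to $P_q$ equals the common value $r < \epsilon$) and in $P_x$ (which is clopen in $L_x \cap \Om$); connectedness of $P_q$ closes the argument. For (iv) the closed-leaves hypothesis is dropped: a standard compactness argument on a relatively compact $P_q$ shows that after shrinking both $P_q$ and $\epsilon$ the open image of $\rho|_{P_x}$ exhausts $P_q$ uniformly in $x$. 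The step I expect to be the main obstacle is the transversality claim at the end of the submersion argument in (iii); it requires a careful description of how $T_z \Lambda_{\rho(z)}$ behaves as $z$ approaches $P_q$, which is delicate in the Finsler setting because the exponential is only $C^1$ at the zero section and the Finslerian slice is a cone rather than a smooth disk at its vertex, and the intrinsic description of its tangent space afforded by Proposition \ref{proposition-tnu} is what makes the approach along a fixed radial direction go through.
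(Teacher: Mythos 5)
Your treatment of (ii) and of the submersion statements in (i) and (iii) is essentially the paper's own argument, only with the logical order reversed: the paper first proves transversality of $\Lambda_p$ near $P_q$ by exactly your limit computation (a frame of the regular subfoliation $\F^2$ from Lemma \ref{lemma-subfoliation}, the curves $Z_i(t)=d\exp_{tv}[Y_i]$, Proposition \ref{proposition-tnu} together with $g_{tv}=g_v$, and the fact that $d\exp$ is the identity at the zero vector), then propagates transversality out to $y$ with the homothetic transformation Lemma \ref{lemma-homotheticlemma}, and finally deduces that $\rho|_{P_x}$ is a submersion by a dimension count; you run the same two ingredients in the opposite direction through the factorization $\rho|_{P_x}=\rho|_{P_{h^+_\lambda(x)}}\circ h^+_\lambda|_{P_x}$. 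This part is correct.

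Two points need repair. First, in the closedness step of (iii), your parenthetical justification that the subsequential limit $y$ lies in $\Om(P_q,\epsilon)$ (``its distance to $P_q$ equals the common value $r<\epsilon$'') does not suffice: $\overline{\Om(P_q,\epsilon)}$ has a lateral boundary over $\overline{P_q}\setminus P_q$, and a point there can be at distance $r<\epsilon$ from $P_q$ (already for a flat strip over an open segment of a leaf in the Euclidean plane). What rescues the argument is the datum you did not use, namely that the foot points $\rho(y_n)$ converge to $p\in P_q$: as in the paper, pass to a convergent subsequence of the unit initial velocities $\dot\gamma^{y_n}_+(0)\in\nu^1(P_q)$, whose limit is some $v\in\nu^1_p(P_q)$; then $\exp_p(rv)\in\Om(P_q,\epsilon)$ because $p\in P_q$ and $r<\epsilon$, and this point equals $y$. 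Second, part (iv) is asserted rather than proved: a ``standard compactness argument \ldots uniformly in $x$'' does not address the actual failure mode, which is that for a single non-closed plaque $P_x$ the image $\rho(P_x)$ is open but need not be closed, and shrinking $P_q$ and $\epsilon$ by itself does not change this. The missing idea (the one the paper uses) is to shrink so that the regular subfoliation $\F^2$ of Lemma \ref{lemma-subfoliation} can be taken with closed (hence compact) plaques contained in the plaques of $\F$, apply the clopen argument of (iii) to the $\F^2$-plaques, and conclude since $\rho(P_x)\supset\rho(P^2_x)=P_q$.
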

\begin{proof}
     For part $(i)$, given $y\in \Lambda_p$, consider the unique $v\in \nu_{\rho(y)}(P)$ with $F(v)<\epsilon$ such that $\exp(v)=y$. Consider a basis $\{e_1,\ldots, e_r\}$ of $T_{\rho(y)}P_q$ and a regular subfoliation $\F^2$ of the foliation $\F$ in a neighborhood $U$ of $\rho(y)$ as in Lemma \ref{lemma-subfoliation}. Then  by reducing $U$ if necessary, we can extend $\{e_1,\ldots, e_r\}$ to a frame $\{X_1,\ldots,X_r\}$ of the leaves of $\F^2$. Consider now a basis $\{Y_1,\ldots,Y_{n-r}\}$ of the $g_v$-orthogonal space to $T_{\rho(y)}P_q$. Observe that defining $Z_i(t)=d\exp_{tv}[Y_i]$, for $t\in(0,1]$ and $i=1,\ldots,n-r$ and using Proposition \ref{proposition-tnu}        and that $g_v=g_{tv}$ for $t>0$,      it follows that $\{Z_1(t),\ldots,Z_{n-r}(t)\} $ is a basis of $
T_{\exp(tv)}\Lambda_p$. Consider the system 
\[\{X_1(\exp(tv)),\ldots,X_r(\exp(tv)),Z_1(t),\ldots,Z_{n-1}(t)\}\]
 for $t\in[0,1]$. As in $t=0$ this system is linearly independent, by continuity, it will be linearly independent for small $t$, which implies that $\Lambda_p$ is transversal to the plaques of $\F$ in $\exp(tv)$        for such a small $t$.      As the slice $\Lambda_p$ is invariant by the homothetic transformation, applying the homothetic transformation lemma, it follows that $\Lambda_p$ is also transversal in $y=\exp(v)$, proving part $(i)$. Part $(ii)$ follows from the observation that $\rho$ is the composition of a diffeomorphism with $\tau:\nu(P_q)\cap F^{-1}(\epsilon)\rightarrow P_q$, which is a submersion by Proposition \ref{proposition-nuP}. The surjectivity of $\rho$ follows from definition. For part $(iii)$, observe that  the kernel of $\rho$  is the tangent space to the slices $\Lambda_p$. As $\Lambda_p$ is transversal to the         plaques,      $d(\rho|_{P_x})$ has to be surjective by a counting of dimensions.

     In order to see that $\rho|_{P_x}$ is surjective, observe that its image $\rho(P_x)$ is open and closed in $P_q$. Open because        $\rho|_{P_x}$      is a submersion. Let us see that it is closed. If $\{u_n\}_{n\in\N}\in \rho(P_x)$ is a sequence that converges to $u\in P_q$,  let $\gamma^{u_n}:[0,b]\rightarrow M$ be a  unit minimizing geodesic from $u_n$ to $P_x$ ($b\in\RR$ does not depend on $n$). Then $\{\dot \gamma^{u_n}(0)\}_{n\in\N}$ admits a subsequence in $\nu^1(P_ q)$ converging to $v\in \nu^1_u(P_q)$. Let $\gamma_v:[0,b]\rightarrow M$ be the geodesic such that $\dot\gamma_v(0)=v$. It turns out that $\gamma_v([0,b])\subset \Om(P_q,\epsilon)$, $\gamma_v(b)\in P_x$, because $P_x$ is closed,  and  $u=\rho(\gamma_v(b))$, as required. 
Part $(iv)$ follows from part $(iii)$ observing that it is possible to reduce $P_q$ and $\epsilon$ in such a way that there exists a regular subfoliation $\F^2$ with closed leaves, and then, analogously to part         $(iii)$,      the restriction of $\rho$ to the plaques of $\F^2$ is also a surjective submersion.
   
\end{proof}
The above      results    can be used to prove the next proposition.

\begin{corollary}\label{equivalencedef}
Let $(M,F,\mathcal F)$ be a (regular) Finsler foliation with connected fibers given by the leaves of a submersion  $\pi:M\to N$. Then there exists a unique Finsler metric $\hat F$ on $N$ such that $\pi:(M,F)\to (N,\hat F)$ is a Finsler submersion. As a consequence, every        regular      Finsler foliation  is described locally by a Finsler submersion.
\end{corollary}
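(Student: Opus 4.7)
The plan is to define $\hat F$ pointwise via Lemma \ref{lem:exMin}, establish smoothness through a section, and prove well-definedness along each fiber using the Finsler condition. Fix $b\in N$ and $p\in\pi^{-1}(b)$: the linear submersion $d\pi_p:(T_pM,F_p)\to T_bN$ admits, by Lemma \ref{lem:exMin}(b), a unique Minkowski norm on $T_bN$ making $d\pi_p$ a linear Finsler submersion, explicitly $w\mapsto F(v_p(w))$, where $v_p(w)\in\nu_p(\pi^{-1}(b))$ is the unique horizontal lift of $w$. Uniqueness of $\hat F$ in the statement is then immediate, so the only substantive tasks are to show independence of $p\in\pi^{-1}(b)$ and smoothness of the resulting function on $TN$.

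For smoothness I would fix a local section $S$ of $\pi$ near $p$ — for example a Finslerian slice $\Lambda_p$ as in Proposition \ref{proposition-submer}(i), which is transverse to the plaques and on which $\pi|_S$ is a local diffeomorphism onto a neighborhood of $b$. Lemma \ref{lem:exMin2}(b) produces a smooth Finsler metric on $S$ whose transfer to $N$ via $\pi|_S$ coincides at each $x\in S$ with the pointwise Minkowski norm defined above; once independence of the base point is granted, this agrees with $\hat F$ on a neighborhood of $b$, yielding smoothness.

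The main step, and expected obstacle, is independence of $p$ within a fiber. By connectedness of fibers it suffices to handle $p,p'$ lying in a common plaque $P$ inside a tubular neighborhood $\Om(P,\epsilon)$. Given a small $w\in T_bN$, let $v_p\in\nu_pP$ and $v_{p'}\in\nu_{p'}P$ be its horizontal lifts at $p$ and $p'$, and set $y:=\exp_p(v_p)\in\Om(P,\epsilon)$. By Proposition \ref{proposition-submer}(iii) there exists $y'\in P_y$ with $\rho(y')=p'$; write $y'=\exp_{p'}(u')$ with $u'\in\nu_{p'}P$. The Finsler condition forces $P_y\subset \mathcal{C}^+_{F(v_p)}(P)$ by Lemma \ref{lemma-equidistant}, giving $F(u')=F(v_p)$. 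By Lemma \ref{lemma-homotheticlemma} the homothetic transformation sends the plaque $P_y$ onto $P_{h^+_\lambda(y)}$, so $h^+_\lambda(y)$ and $h^+_\lambda(y')$ lie in the same plaque, hence in the same fiber of $\pi$; consequently $\pi(h^+_\lambda(y))=\pi(h^+_\lambda(y'))$ for every admissible $\lambda$. Using $h^+_\lambda(y)=\exp_p(\lambda v_p)$ and $h^+_\lambda(y')=\exp_{p'}(\lambda u')$ and differentiating at $\lambda=0$ yields $w=d\pi_p(v_p)=d\pi_{p'}(u')$; the uniqueness of the horizontal lift in Lemma \ref{lem:exMin}(a) then gives $u'=v_{p'}$, and combining with $F(u')=F(v_p)$ we conclude $F(v_{p'})=F(v_p)$, as required.

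The final consequence is immediate: a regular Finsler foliation is, by the very definition of a foliation, locally given by a submersion onto its local leaf space, with connected fibers, and applying the main statement to that local submersion provides a Finsler metric downstairs making it a Finsler submersion.
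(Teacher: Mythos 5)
Your proposal is correct and follows essentially the same route as the paper: pointwise definition of $\hat F$ via Lemma \ref{lem:exMin}, reduction to two points in a common plaque by connectedness, then the combination of Proposition \ref{proposition-submer}(iii), equidistance of plaques, the homothetic transformation Lemma \ref{lemma-homotheticlemma} (to get equality of the projections $\pi\circ\gamma$ and differentiate at $0$), and injectivity of the horizontal lift to conclude $u'=v_{p'}$ and $F(v_p)=F(v_{p'})$, with smoothness coming from Lemma \ref{lem:exMin2} — exactly the paper's argument. The only caveat is your illustrative choice of section for smoothness: the Finslerian slice $\Lambda_p$ excludes $p$ (and $\exp$ is only $C^1$ at the zero section), so $\pi|_{\Lambda_p}$ does not cover $b$ and would not give smoothness of $\hat F$ over $b$ itself; one should instead take any smooth transversal through $p$ (e.g.\ from a foliated chart), which is precisely the hypothesis of Lemma \ref{lem:exMin2}.
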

\begin{proof}
Given $p\in M$, consider the map $\hat F_{\pi(p)}:T_{\pi(p)}N\to \mathbb{R}$ given by
\begin{equation}
\hat F_{\pi(p)}:=F_p\circ \varphi_p^{-1},
\end{equation}
where $\varphi^{-1}_p$ is the horizontal lift of $d \pi$ (see part (a) of Lemma~\ref{lem:exMin}). Observe that, for every $p\in M$, $\hat F_p=F_p\circ \varphi_p^{-1}$ is a      Minkowski norm    such that $d\pi_p:(T_pM,F_p)\to (T_{\pi(p)}N,\hat F_{\pi(p)})$ is a      linear    Finsler submersion (see part (b) of Lemma~\ref{lem:exMin}). Let us see that this map is well-defined, namely,  $F\circ \varphi^{-1}_{p_1}=F\circ \varphi^{-1}_{p_2}$, for every $p_1,p_2\in \pi^{-1}(q)$. 
     
As the fibers are connected, it is enough to prove that this property is open.              Therefore we can assume that $p_1$ and $p_2$ belong to  the same plaque,  say $P_1:=P_{p_1}$. Given $v\in T_qN$, let $v_i=\varphi_{p_i}^{-1}(v) \in T_{p}M$ with $i=1,2$. We will show that $F(v_1)=F(v_2)$. Consider        $\Om(P_1,\epsilon)$      a tubular neighborhood of $P_1$ and fix $P$ a plaque in  $\Om(P_1,\epsilon)$ which contains $\gamma_{v_1}(s)$ for some $s$.  Let $w \in \nu_{p_2}(P_1)$ be such that $\gamma_{w}|_{[0,s]}$  attains the distance $d_F(P_1,P)$        (recall part (iii) of Proposition \ref{proposition-submer}).      In particular, $F(v_1)=F(w)$. By the homothetic lemma \ref{lemma-homotheticlemma}, we get that $\pi(\gamma_{v_1}(t))=\pi(\gamma_{w}(t))$ for $t$ in a neighborhood of $0$ and then $d\pi(w)=d\pi(v_1)=d\pi(v_2)$. Therefore $w=v_2$ by the injectivity of  $\varphi_{p_2}$, and consequently $F(v_1)=F(v_2).$  
The  smoothness and unicity  of $\hat F$ follow from Lemmas~\ref{lem:exMin} and \ref{lem:exMin2}.

\end{proof}

\begin{remark}\label{rem:transport}
     Recall that given a Finsler submersion  $\pi:(M,F)\to (N,\hat F)$, each segment of geodesic in $(N,\hat F)$ can be lifted to a
horizontal segment of geodesic in $(M,F)$, recall \cite[Theorem 3.1]{Duran}. This fact together with the above corollary allow us 
to \emph{transport} horizontal segments of geodesics along regular leaves. More precisely, let  
$\F$ be a singular Finsler foliation on a complete Finsler manifold $(M,F)$,  $\gamma:[0,1]\to M$
be a horizontal segment of geodesic such that $\gamma|_{(0,1]}$ has only regular points and $\beta:[0,1]\to L_{\gamma(1)}$ a curve with
$\beta(0)=\gamma(1)$. 
Then there exists a variation  of horizontal segments of  geodesics $s\to\gamma^{s}$ 
such that $\gamma^{0}=\gamma$, $\gamma^{s}(1)=\beta(s)\in L_{\gamma(1)}$, $\gamma^{s}|_{(0,1]}$ has only regular points and $\pi(\gamma^{s})=\pi(\gamma)$, where $\pi:M\to M/\F$ is the canonical projection. Due to the homothetic lemma
we can also infer that $\gamma^{s}(0)\in L_{\gamma(0)}$ even if  $L_{\gamma(0)}$ is a singular leaf.         Analogously, the transport can be done from the initial leaf $L_{\gamma(0)}$ to $L_{\gamma(1)}$ assuming that $\gamma_{[0,1)}$ has only regular points.       

\end{remark}

     Recall that a {\it transnormal function} $f:M\rightarrow \RR$ of a Finsler manifold $(M,F)$ is a function such that there exists another  real function $b:f(M)\rightarrow \RR$ such that  $F(\nabla f)^2=b(f)$, where $\nabla f$ is the gradient of $f$, namely, $\nabla f=\mathcal L^{-1}(df)$.  
   
\begin{proposition}
\label{proposition-metricFS}
Let $\F=\{L\}$ be a singular Finsler foliation on a  Finsler manifold $(M,F)$. 
Given a point $q\in M$,  there exists a slice $S_q$ transversal to $L_q$  and a  Finsler metric $\hat F$ on 
$S_q$  with the following properties:
\begin{enumerate}
\item[(a)]
 the leaves of 
the slice foliation $\F_{q}=\F\cap S_{q}$  endow $(S_q,\hat F)$ of a structure of 
singular Finsler foliation,   
\item[(b)]   the distance between the leaves of $\F_{q}$ 
(with respect to $\hat F$)  
 and the distance between the leaves of $\F$ (with respect to $F$) coincide locally. 
 
\end{enumerate}
\end{proposition}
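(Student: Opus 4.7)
The plan is to use the regular subfoliation of Lemma \ref{lemma-subfoliation} to reduce locally to a regular Finsler foliation, then invoke Corollary \ref{equivalencedef} to represent it by a Finsler submersion onto a base $N$, and finally transfer the base metric to the slice via Lemma \ref{lem:exMin2}.

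First, I would apply Lemma \ref{lemma-subfoliation} to obtain a neighborhood $U$ of $q$, a regular subfoliation $\F^{2}$ of $\F|_{U}$, and an embedded slice $S_{q}\subset U$ transverse to $\F^{2}$ with $q\in S_{q}$. Since $L^{2}_{q}$ is open in $L_{q}$, the decomposition $T_{q}M=T_{q}S_{q}\oplus T_{q}L_{q}$ is automatic. After shrinking $U$ and $S_{q}$, the plaques of $\F^{2}$ are closed in $U$, so by Corollary \ref{equivalencedef} there is a Finsler submersion $\pi:(U,F)\to (N,F_{N})$ whose fibers are precisely the $\F^{2}$-plaques. A further shrinking makes $\pi|_{S_{q}}:S_{q}\to \pi(S_{q})\subset N$ a diffeomorphism (it is a local diffeomorphism by transversality and complementary dimension). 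Lemma \ref{lem:exMin2} then provides a smooth Finsler metric $\hat F$ on $S_{q}$ defined by $\hat F_{x}=F_{N}\circ d\pi|_{T_{x}S_{q}}$, so that $\pi|_{S_{q}}:(S_{q},\hat F)\to (\pi(S_{q}),F_{N})$ is a Finsler isometry onto its image.

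For part (a), each leaf $L\cap U$ of $\F$ is a union of $\F^{2}$-plaques by Lemma \ref{lemma-subfoliation}(a), so $\pi$ projects $\F|_{U}$ to a well-defined partition $\F_{N}$ of $\pi(U)$, and this partition pulls back by $\pi|_{S_{q}}$ to $\F_{q}=\F\cap S_{q}$. Smooth vector fields tangent to $\F$ descend to vector fields tangent to $\F_{N}$ and pull back to smooth vector fields on $S_{q}$ tangent to $\F_{q}$, covering the singular-foliation axiom. For the Finsler axiom, the key point is that since $T_{p}L^{2}_{p}=T_{p}L_{p}$ at every $p\in U$, the horizontal distribution of $\pi$ (those vectors $F$-orthogonal to the $\F^{2}$-fibers) coincides with the set of $F$-orthogonal vectors to the $\F$-leaves. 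Hence a geodesic $\alpha$ of $\hat F$ in $S_{q}$ that is $\hat F$-orthogonal to $\F_{q}$ at its start corresponds under $\pi|_{S_{q}}$ to a geodesic of $F_{N}$ whose horizontal lift through any point of the corresponding $\F$-leaf is an $F$-geodesic orthogonal to $\F$ at its start; by the Finsler hypothesis on $\F$, this lift remains $F$-orthogonal to every $\F$-leaf it meets, and projecting back shows that $\alpha$ remains $\hat F$-orthogonal to every $\F_{q}$-leaf it meets.

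For part (b), the Finsler submersion property implies that the forward distance between two nearby $\F$-plaques in $U$ is realized by a horizontal geodesic whose projection in $N$ has the same $F_{N}$-length; pulling back via the Finsler isometry $\pi|_{S_{q}}$ yields $d_{\hat F}(L_{1}\cap S_{q},L_{2}\cap S_{q})=d_{F}(L_{1},L_{2})$ locally, and the backward distance is handled by applying the same argument to the reverse metric $\tilde F$ (cf.\ Lemma \ref{lemma-equidistant}). The main technical obstacle will be the verification of the Finsler equidistance condition in part (a), where one must carefully relate $F$-orthogonality to $\F$-leaves in $M$ with $\hat F$-orthogonality to $\F_{q}$-leaves in $S_{q}$ via the horizontal lifting/projection of geodesics; once this identification is set up, parts (a) and (b) reduce to straightforward applications of Corollary \ref{equivalencedef} and Lemma \ref{lem:exMin2}.
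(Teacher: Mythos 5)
There is a genuine gap, and it sits exactly at the point you yourself flag as ``the main technical obstacle.'' The subfoliation $\F^{2}$ produced by Lemma \ref{lemma-subfoliation} is only a smooth regular foliation subordinate to $\F$; it is \emph{not} a Finsler foliation (its leaves need not be locally equidistant for $F$), so Corollary \ref{equivalencedef} cannot be invoked to produce a Finsler submersion $\pi:(U,F)\to (N,F_N)$ whose fibers are the $\F^{2}$-plaques. Indeed, the content of Corollary \ref{equivalencedef} is precisely that for a \emph{Finsler} foliation the pointwise norms coming from Lemma \ref{lem:exMin} agree at all points of a fiber; for an arbitrary subfoliation $\F^2$ this fails, and no metric $F_N$ making $\pi$ Finsler exists in general. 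A second, related error: you assert $T_{p}L^{2}_{p}=T_{p}L_{p}$ at every $p\in U$, but this holds only along the stratum of $q$; at regular points $p$ near a singular leaf one has $\dim L^{2}_{p}=\dim L_{q}<\dim L_{p}$, so the $F$-orthogonal space to the $\pi$-fiber does not coincide with the orthogonal cone to the $\F$-leaf, and your horizontal-lift argument for the Finsler property of $\F_{q}$ (and the distance comparison in (b)) breaks down.

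The paper avoids both problems as follows. It uses $\pi:U\to S_q$ (fibers the $\F^2$-plaques) only through Lemma \ref{lem:exMin2}, which is a \emph{pointwise} construction along the section $S_q$ plus a smoothness statement, and requires no Finsler-submersion structure on $U$. The Finsler property of $\F_q$ is then proved not by lifting geodesics but via transnormal functions: for a plaque $P_p$ with $p\in S_q$, the distance function $f_{+}=d(P_p,\cdot)$ has $F$-gradient $X$ which is $F$-orthogonal to the $\pi$-fibers, because the $\F$-plaques (hence the $\F^{2}$-fibers) lie inside the cylinders of $P_p$ by Lemma \ref{lemma-equidistant}; this is where the Finsler hypothesis on $\F$ enters, rather than through any identification of horizontal spaces with orthogonal cones of $\F$. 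Consequently $d\pi(X)$ is the $\hat F$-gradient of $\hat f_{+}=f_{+}|_{S_q}$, $\hat f_{+}$ is $\hat F$-transnormal, and the result of He--Yin--Shen identifies its level sets with the $\hat F$-cylinders of $P_p\cap S_q$; since the plaques of $\F\cap S_q$ sit in these level sets, Lemma \ref{lemma-equidistant} (applied also to the reverse metrics) gives (a), and $F(X)=\hat F(d\pi(X))=1$ gives the local coincidence of distances in (b). If you want to salvage your outline, you must replace the appeal to Corollary \ref{equivalencedef} and the horizontal-lift step by an argument of this kind.
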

\begin{proof}
The proof is       adapted from the one of    the Riemannian version  \cite[Proposition 2.2]{AlexToeben2}. 
Let us start by constructing  the desired metric $\hat F$.    

Let $\F^{2}$ be the subfoliation of $\F$ with plaque $P_{q}=L^{2}_{q}$ 
in a neighborhood $U$  and $S_q$ be the slice  presented  in Lemma  \ref{lemma-subfoliation}.


Using  Lemma \ref{lemma-subfoliation}      and reducing $U$ if necessary,    we can define  a submersion $\pi:U\rightarrow S_q$ whose fibers are the leaves of $\F^2$. 
 By part (b) of Lemma \ref{lem:exMin2}, there exists a Finsler metric $\hat F$      on $S_q$    such that 
\begin{equation}\label{lin-sub}
d \pi_{x}: (T_xM,F_x)\rightarrow (T_xS_q,\hat F_x)
\end{equation}
is a linear  Finsler submersion      for every $x\in S_q$.     Fix $p\in S_q$ and  let $P_p$ be the plaque at $p$ in $U$. Consider now a tubular neighborhood $\tilde U$ of the plaque $P_p$ and define $f_+:\tilde U\setminus P_p\rightarrow \RR$ as $f_+(x)=d^+(P_p,x)$, which is a transnormal function, and let $X$ be the $F$-gradient of $f_+$. Consider also the restriction $\hat f_+=f_+|_{\tilde U\cap S_q\setminus (P_p\cap S_q)}$ and the vector field along $S_q$, $\tilde X=d \pi(X)$. Observe that the fibers of the linear submersions $d\pi_{x}$ in \eqref{lin-sub} are contained in the tangent spaces of the cylinders with axis   the plaque $P_p$ because these cylinders contain the plaques of the foliation $\F$ and then also those of $\F^2$      (see Lemma \ref{lemma-equidistant}).   
 This implies that $X$ is $F$-orthogonal to the fibers of $\pi$. As a consequence, $F(X)=\hat F(\tilde X)$ and $ d\pi_{x}: (T_xM,g_X)\rightarrow (T_xS_q,\hat g_{\tilde X})$ is a linear Riemannian submersion by \cite[     Prop. 2.2  ]{Duran}, where $g$ is the fundamental tensor of $F$ and $\hat g$ the fundamental tensor of $\hat F$. In particular,  $g_X(X,v)=\hat g_{\tilde X}(\tilde X, d\pi_{x}(v))$ for all $v\in T_xM$. Given $v\in T_xS_q$, we have $d\hat f_+(v)=df_+(v)=g_X(X,v)=\hat g_{\tilde X}(\tilde X,v)$ and consequently $\tilde X$ is the $\hat F$-gradient of $\hat f_+$;
recall Remark \ref{remark-legrendre-transformation}. Moreover, $\hat F(\tilde X)=F(X)=1$, which implies that $\hat f_+$ is a transnormal function of $(\tilde U\cap S_q,\hat F)$, which        extends to zero       on $P_p\cap S_q$.        Observe that part $(2)$ of 
 \cite[Proposition 4.1]{He-Yin-Shen} does not require any kind of completeness. Then applying this result to $\hat f_+$,  it follows that      the         integral lines      of  the $\hat F$-gradient of $\hat f_+$ are segments of geodesics, and it is not difficult to check that these geodesics meet  $P_p\cap S_q$ orthogonally.          Considering a tubular neighborhood $\tilde V$ of $P_p\cap S_q$ for $(\tilde U\cap S_q,\hat F)$,     
it follows that the level sets of $\hat f_+|_{\tilde V}$ coincide with the future cylinders of $P_p\cap S_q$ with respect to $\hat F$ and then the plaques of $\F\cap S_q$ in         
$\tilde V$      are contained in these cylinders as the plaques in $\tilde U$ of the foliation $\F$ are contained in the level sets of $f_+$.  Proceeding analogously with the reverse metrics of $F$ and $\hat F$, we deduce that the leaves of $\F\cap S_q$ are contained locally in the past cylinders of  $\hat F$, concluding that $(\F\cap S_q,\hat F)$ is a singular Finsler foliation.
The fact that $\hat F(\tilde X)=F(X)=1$ also allows us to conclude that  locally the distances between the leaves coincide with those of $(M,       F,     \F)$.

\end{proof}
\begin{remark}
     Observe that the meaning of locally in part (b) of Proposition \ref{proposition-metricFS} as it can be checked in the last proof is the following:  given a plaque $P_p$ of $U$ with $p\in S_q$ as in the last proof, there exists a tubular
neighborhood $\tilde U$        of $P_p$ for $(M,F)$ and another tubular neighborhood $\tilde V$ of $P_p\cap S_q$ for $(\tilde U\cap S_q,\hat F)$ such that given $x\in \tilde V$, if $\tilde P_x$ is the plaque of $x$ in $\tilde V$,  the $\hat F$-distance from $ P_p\cap S_q$ to $\tilde P_x$ coincides with the $F$-distance from $P_p$ to  $P_x$.     
   
\end{remark}
\begin{definition}\label{locclosed}
We will say that a singular Finsler foliation $(M,\F,F)$ has {\it locally closed leaves} if, for every point $q\in M$, there exists a slice $S_q$ endowed with a structure of singular Finsler foliation as in Proposition \ref{proposition-metricFS} with closed (and then compact) leaves.
\end{definition}
\begin{proposition}[Slice reduction]
\label{proposition-sliceflat-metric}
Let $\F=\{L\}$ be  a singular Finsler foliation        with locally closed leaves      on a  Finsler manifold $(M,F)$. Then for every point $q\in M$, there exists a slice $S_q$ and a Finsler metric $\hat F$ in $S_q$ as in Proposition \ref{proposition-metricFS}, such that  $\F_{q}=S_{q}\cap \F$ with the metric $\hat F$  is foliated diffeomorphic
to        an open subset of  a Minkowski space  endowed with a structure of        singular Finsler foliation.
\end{proposition}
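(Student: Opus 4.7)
The plan is to construct the claimed Minkowski space and its singular Finsler foliation explicitly via the $\hat F$-exponential map at $q$, and to verify the Finsler property by exhibiting a one-parameter family of Finsler metrics related by scaling that interpolates between the pulled-back Finsler metric on $S_q$ and the constant Minkowski norm on $T_qS_q$. The key tool will be the Homothetic Transformation Lemma applied to the $0$-dimensional plaque $\{q\}$, which produces a scaling invariance that bootstraps the Finsler property from the pulled-back metric to its Minkowski limit.

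First I would apply Proposition \ref{proposition-metricFS} to fix a slice $S_q$ and Finsler metric $\hat F$ on $S_q$ so that $(S_q,\hat F,\F_q)$ is a singular Finsler foliation; the locally closed hypothesis guarantees the leaves of $\F_q$ are closed near $q$. Since $S_q$ is transversal to $L_q$ with complementary dimension, $L_q\cap S_q=\{q\}$ locally, so $\{q\}$ is a $0$-dimensional plaque of $\F_q$. Set $V:=T_qS_q$ equipped with the Minkowski norm $F_0:=\hat F_q$. For $\epsilon>0$ small enough the $\hat F$-exponential map
\[
\exp_q^{\hat F}\colon U:=B_{F_0}(0,\epsilon)\longrightarrow \Om(\{q\},\epsilon)\subset S_q
\]
is a diffeomorphism onto a tubular neighborhood, and I set $\F_0:=(\exp_q^{\hat F})^{-1}(\F_q|_{\Om(\{q\},\epsilon)})$. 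The claimed foliated diffeomorphism is $\exp_q^{\hat F}$; it remains to show that $(U,F_0,\F_0)$ is a singular Finsler foliation.

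Next I would apply the Homothetic Transformation Lemma \ref{lemma-homotheticlemma} to the plaque $\{q\}$, so that $h_\lambda^+$ carries plaques of $\F_q$ to plaques. Because $\hat F$-geodesics through $q$ correspond under $\exp_q^{\hat F}$ to rays through $0$ in $V$, $h_\lambda^+$ becomes the linear scaling $\mu_\lambda\colon v\mapsto\lambda v$ on $V$; hence $\F_0$ is $\mu_\lambda$-invariant, and $F_0$ is tautologically so. Writing $F^*:=(\exp_q^{\hat F})^*\hat F$, the foliation $(U,F^*,\F_0)$ is singular Finsler by pullback. I then introduce the family
\[
F_\lambda(v,w):=F^*(\lambda v,w),\qquad \lambda\in(0,1].
\]
Positive homogeneity of $F^*$ in the second argument yields $\mu_\lambda^*F^*=\lambda F_\lambda$, so $\mu_\lambda\colon(U,\lambda F_\lambda)\to(\mu_\lambda(U),F^*)$ is a Finsler isometry. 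Combining with the $\mu_\lambda$-invariance of $\F_0$ and the invariance of the Finsler foliation property under positive rescalings of the metric, one concludes that $(U,F_\lambda,\F_0)$ is a singular Finsler foliation for every $\lambda\in(0,1]$.

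The hard part will be the final limiting step. As $\lambda\to0$, the metrics $F_\lambda$ converge smoothly on compacta of $U\times(V\setminus\{0\})$ to $F_0$, since $F^*(0,w)=\hat F_q(w)=F_0(w)$ and $F^*$ is smooth away from the zero section; fundamental tensors $g^{F_\lambda}$ then converge to $g^{F_0}$, and $F_\lambda$-geodesics converge to straight-line $F_0$-geodesics. Translating the Finsler foliation property into local equidistance of leaves (Lemma \ref{lemma-equidistant}) and using the convergence of the corresponding distance functions, I would argue that the equidistance condition persists in the limit, yielding that $(U,F_0,\F_0)$ is a singular Finsler foliation. The delicate technical issue is controlling tubular radii and distance functions \emph{uniformly in} $\lambda$, particularly for plaques approaching $0\in V$, where the family $F_\lambda$ does not converge smoothly on the zero vector.
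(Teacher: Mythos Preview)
Your overall strategy matches the paper's: rescale the slice metric by the homotheties centered at $q$, observe that $\F_q$ is singular Finsler for each rescaled metric, and pass to the flat limit. The paper works on $S_q$ with $\hat F^{\lambda}:=\frac{1}{\lambda}h_\lambda^*\hat F$ rather than pulling back to $T_qS_q$, but this is cosmetic.

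The genuine gap is your smooth-convergence claim. You assert that $F^*=(\exp_q^{\hat F})^*\hat F$ is smooth on $U\times(V\setminus\{0\})$ and hence $F_\lambda\to F_0$ smoothly on compacta. But ``away from the zero section'' only guarantees smoothness in the fiber variable $w$; smoothness in the base variable $v$ at $v=0$ would require $\exp_q^{\hat F}$ to be smooth at $0$, and in Finsler geometry the exponential map is only $C^1$ on the zero section. The paper flags precisely this: ``it is not clear if $\{\widehat{F}^\lambda\}$ converges smoothly to $\widehat F^0$ (recall, unlike what happens in the Riemannian geometry setting, $\exp$ is just $C^1$ at zero in Finsler geometry).'' So your assertion that $F_\lambda$-geodesics and tubular neighborhoods converge by smooth dependence is unjustified, and your closing caveat about plaques near $0$ misdiagnoses the issue as being at the zero tangent vector rather than at the base point $0\in U$.

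The paper's resolution is substantive and is missing from your outline. It replaces smooth convergence by the weaker uniform convergence, uses the compactness of the leaves of $\F_q$ (this is exactly where the locally closed hypothesis is invoked) to obtain a radius $R_1$ and a threshold $\lambda_0$ below which $\hat F^\lambda$-minimizers between points of $B_{R_1}(q)$ exist, and then proves the equidistance identity $d_\lambda(L,x)=d_\lambda(L,y)$ directly for each $\lambda<\lambda_0$ before passing to the limit. The latter step requires a Kleiner-type argument showing that a minimizing geodesic from $L$ to a regular leaf $\tilde L$ has only regular interior points, followed by transport of this geodesic along $\tilde L$ (Remark on horizontal lifts in the regular stratum); the singular case then follows by density. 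None of these ingredients appears in your proposal, and without them the limiting step does not go through.
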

\begin{proof}

Let $\hat F$ be the Finsler metric on $S_{q}$ defined in Proposition \ref{proposition-metricFS}. Then 
the proof   turns out to be   similar to the proof of the analogous result for singular Riemannian foliations; see \cite[Propositon 2.3]{AlexToeben2}. In fact,  on the Finsler space $(S_{q}, \hat F)$  and reducing $S_q$ if necessary,  consider the homothetic transformation $h_{\lambda}: S_{q}\to S_{q}$ defined as $h^+_{\lambda}(x)=\gamma(\lambda r)$, where $\gamma$ is the radial (Finsler) geodesic on  $(S_{q},\hat F)$ starting at $q=\gamma(0)$ and $x=\gamma(r)$.   
Set ${\hat F}^{\lambda}:=\frac{1}{\lambda} h_{\lambda}^{*} \hat F$. 
Note that ${\hat F}^\lambda$ converges (uniformly) to $ \hat F^{0}$ when $\lambda\to 0$, 
where $\hat F^{0}$ is the Finsler metric on $S_{q}$ so that
$\hat F_{q}=\exp_{q}^{*}\hat F^{0}$ for the Minkowski norm ${\hat F}_{q}$ on $T_{q}S_{q}$.   

     Since $\F_{q}$ is a singular Finsler foliation on $(S_{q}, \hat F)$, it follows from Lemma \ref{lemma-homotheticlemma}  that
$\F_{q}$ is a singular Finsler foliation on $(S_{q}, {\hat F}^{\lambda})$.   

     We claim that \emph{$\F_{q}$ is a singular Finsler foliation on 
$(S_{q}, \hat F^{0})$}, reducing the slice $S_q$ if necessary. This will imply that  \emph{$\F_{q}$ is diffeomorphic to        an open subset of      a singular Finsler foliation on the Minkowski space $(T_{q}S_{q}, {\hat F}_{q})$} concluding the proof.            Observe that even if, in principle, the singular Finsler foliation is defined using the exponential map in an open subset $U$ of $T_qS_q$ of the zero vector, we can extend this foliation to the whole $T_qS_q$ by using the homothetic lemma, namely, the leaves of the extended foliation are determined by the images by the homothetic  transformations of the leaves in $U$, and this singular foliation is Finsler because the orthogonality is invariant by homotheties and geodesics are always straight lines.     

     In order to prove the claim we only need to check that, given  leaves $L,\tilde L\subset S_q$, and $x,y\in \tilde L$, there exists $\lambda_{0}$ so that 
\begin{equation}
\label{eq-slice-distance-lambda}
d_{\lambda}(L,x)=d_{\lambda}(L,y)
\end{equation}
for $\lambda< \lambda_{0}$, where $d_\lambda$ is the distance in $S_q$ computed with the metric $\hat F^\lambda$.

     Once we have proved \eqref{eq-slice-distance-lambda}, we can take the limit and conclude that 
\begin{equation}
\label{eq-slice-distance-lambda-2}
d_{0}(L,x)=d_{0}(L,y).
\end{equation}
  The claim will follow from  
\eqref{eq-slice-distance-lambda-2}, its analogous for the metric $v\to \widehat{F}^{0}(-v)$ 
and from Lemma \ref{lemma-equidistant}.   

     Since $\F_{q}$ is a singular Finsler foliation with respect to $\widehat{F}^{\lambda}$,  
\eqref{eq-slice-distance-lambda} is clearly  true if $x$ and $y$ are in a tubular neighborhood $U_\lambda$ of $L$. 
The problem is that the neighborhood $U_\lambda$ could be arbitrarily small when $\lambda$ goes to zero. The underlying reason for the apparent lack of control on $U_\lambda$ in the Finslerian setting is that  
it is not clear if $\{ \widehat{F}^\lambda  \}$  converges smoothly to $\widehat F^0$ (recall, unlike what happens in the Riemannian geometry setting,  $\exp$ is just $C^{1}$ at zero in Finsler geometry). We aim to prove \eqref{eq-slice-distance-lambda} for all $x$ and $y$ in $S_q$ (not only for those  contained  in a tubular neighborhood of  $L$). Here we will use compactness of the leaves of $\F_{q}$. In fact this is the only
place in this paper where we have used this assumption.    

     First note that there exists $R_2$, $0<R_{1}<R_{2}$ and $\lambda_{0}>0$ with the following properties:        the ball $B_{R_2}(q)\subset S_q$ of center $q$ and radius $R_2$ with respect to $\widehat F$ is precompact and,      if 
\emph{$\tilde{x}$ and $\tilde{y}$
are in $B_{R_{1}}(q)$         and $\lambda< \lambda_{0}$, then there exists a segment of 
geodesic $\gamma^\lambda$ (with respect to $\widehat{F}^{\lambda}$)  joining $\tilde{x}$ and $\tilde{y}$ such that 
$\ell_{\lambda}(\gamma^\lambda)=d_{\lambda}(\tilde{x},\tilde{y})$}, where $\ell_{\lambda}(\gamma^\lambda)$ is the length of $\gamma^\lambda$ with respect to $\widehat{F}^\lambda$. 
One can check this observation using the fact that $\widehat{F}^{\lambda}$ converges to $\widehat{F}^{0}$ uniformly.

       From now on, we will assume that $L,\tilde L\subset B_{R_1}(q)$. 
Thus proving \eqref{eq-slice-distance-lambda} for this data, we will conclude by reducing the slice $S_q$ to $B_{R_1}(q)$, since \eqref{eq-slice-distance-lambda-2} implies that all the leaves which are close enough to $L$ remain in an $\widetilde F^0$-tubular neighborhood of $L$ contained in $B_{R_1}(q)$.     
      First we consider the case in that $\tilde L$ is regular. Let $x_{0}\in \tilde L$ so that 
$d_{\lambda}(L,\tilde L)=d_{\lambda}(L,x_{0})$ and let $\gamma^{x_0}$ be a segment of geodesic so that  $\gamma^{x_{0}}(0)\in L$, $\gamma^{x_{0}}(1)=x_{0}$ and $\ell_{\lambda}(\gamma^{x_{0}})=d_{\lambda}(L,x_{0})$. 
\emph{Note that $\gamma^{x_{0}}|_{(0,1)}$ has only regular points.} 
In fact assume by contradiction that there exists $0<t_{1}<1$ so that $\gamma^{x_{0}}(t_{1})$ is singular and $\gamma^{x_{0}}(t)$ is
regular for $t>t_{1}.$ 
Note that, by the homothetic lemma and  Proposition  \ref{proposition-submer}, there exists a segment of geodesic $\beta|_{[t_1,t_1+\epsilon]}$ such that $\beta(t_1)=\gamma^{x_0}(t_1)$, $\beta(t_1+\epsilon)\in P_{\gamma^{x_0}(t_1+\epsilon)}$, for a small $\epsilon>0$, which has the same projection on the regular stratum as $\gamma^{x_0}|_{[t_1,t_1+\epsilon]}$. Since they have the same projection and $\gamma^{x_0}|_{(t_1,1]}$ lies in the regular stratum, then $\beta(1)\in P_{\gamma(1)}\subset \tilde L$ and 
$\ell_{\lambda}(\beta)=\ell_{\lambda}\big(\gamma^{x_{0}}|_{[t_{1},1]}\big)$.        Indeed, as the points of $\gamma^{x_0}_{[t_1+\epsilon,1]}$ are regular, $\beta|_{[t_1+\epsilon,1]}$ will be the transport the segment of geodesic $\gamma|_{[t_1+\epsilon,1]}$ (recall Remark \ref{rem:transport}).     
Therefore $\tilde{\gamma}=\beta * \gamma^{x_{0}}|_{[0,t_{1}]}$  is a minimal broken geodesic joining $L$ to $\tilde L$ so that
$\ell_{\lambda}(\tilde{\gamma})=\ell_{\lambda}(\gamma^{x_{0}})$, which is  absurd; compare with 
\emph{Kleiner's lemma} (see \cite[Lemma 3.70]{AlexBettiol}).   

     Once $\gamma^{x_{0}}|_{(0,1)}$ has only regular points, one can ``transport" 
this segment of geodesic, producing segments  $\gamma^{x}$
and $\gamma^{y}$ joining $L$ to $x$ and $y$ so that 
$\ell_{\lambda}(\gamma^{x})=\ell_{\lambda}(\gamma^{y})=\ell_{\lambda}(\gamma^{x_{0}})=d_{\lambda}(L,\tilde L)$. This last 
equation implies \eqref{eq-slice-distance-lambda} when $\tilde L$ is regular.
The case in that $\tilde L$ is singular  follows from the regular case and from fact that the set of regular points is dense on $S_q$
(which can be proved again using the Kleiner's Lemma type argument discussed above)        and using \eqref{eq-slice-distance-lambda-2} for the sequence of regular leaves approximating $\tilde L$.      
  
\end{proof}

For the next proposition we will need the following useful lemma. 
\begin{lemma}
\label{lemma4-RandersFoliation-minkowskiSpace}
Let $\F$  be a singular Finsler foliation on a Minkowski space $V$. Assume
that $\{0\}=L_{0}$, i.e., zero is a leaf. Then 
the minimal stratum $\Sigma$ (i.e., the union of leaves with dimension zero) is  a subspace of 
$V$.
\end{lemma}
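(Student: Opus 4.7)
The plan is to establish that $\Sigma$ is a linear subspace of $V$ by verifying three closure properties: closure under multiplication by positive scalars, closure under negation, and closure under addition. Combined with the hypothesis $0\in\Sigma$, these will force $\Sigma$ to be a subspace.

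For the first property, I would invoke the Homothetic Transformation Lemma~\ref{lemma-homotheticlemma} applied at the leaf $\{0\}$. In the Minkowski space $(V,F)$ the exponential map is the identity at every point, orthogonal geodesics from any point minimize the distance globally, and every $\epsilon>0$ yields a valid tubular neighborhood. A direct computation shows that the future homothetic transformation centered at $\{0\}$ is simply $h^+_\lambda(x)=\lambda x$ for $\lambda>0$. Since $h^+_\lambda$ sends plaques diffeomorphically onto plaques of the same dimension, any zero-dimensional plaque $\{p\}$ is mapped to the zero-dimensional plaque $\{\lambda p\}$, so $\lambda p\in \Sigma$ for all $\lambda>0$.

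The main obstacle is closure under negation, since homotheties centered at $\{0\}$ only produce positive scalings. To handle it I would apply the same lemma centered at another zero-dimensional leaf $\{p\}$ with $p\neq 0$. Picking a small $\eta\in (0,1)$, the previous step yields $q:=(1-\eta)p\in \Sigma$. The crucial fact is that in a Minkowski space the tubular neighborhoods $\Om(\{p\},R)$ exist for arbitrarily large $R$, so $h^{p,+}_\lambda$ can be applied to $q$ for correspondingly large $\lambda$; a direct computation gives
\[ h^{p,+}_\lambda(q)=p+\lambda(q-p)=(1-\lambda\eta)p, \]
and choosing $\lambda>1/\eta$ produces a point $(1-\lambda\eta)p\in\Sigma$ with $1-\lambda\eta<0$. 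Applying the first step to this new point then yields $\RR\, p\subset\Sigma$.

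For closure under addition, given $p,q\in\Sigma$, I would rescale both by a small common factor $\eta>0$ so that $\eta p$ and $\eta q$ lie arbitrarily close to one another; then $\eta q$ sits in a tubular neighborhood of $\{\eta p\}$, and one computes $h^{\eta p,+}_{1/2}(\eta q)=\tfrac{\eta}{2}(p+q)$, which lies in $\Sigma$ by Lemma~\ref{lemma-homotheticlemma}. A final application of the first step yields $p+q\in\Sigma$, completing the argument.
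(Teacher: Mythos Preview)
Your proof is correct and follows essentially the same approach as the paper, which simply notes that one may take the whole of $V$ as a tubular neighborhood of any zero-dimensional leaf and then invokes Lemma~\ref{lemma-homotheticlemma} repeatedly; you have supplied the details the paper omits. Two minor comments: the exponential map at $p$ is the translation $v\mapsto p+v$ rather than literally the identity, and the rescaling by $\eta$ in your addition step is unnecessary (since tubular neighborhoods of a point in a Minkowski space can be taken arbitrarily large), though neither point affects the validity of your argument.
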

\begin{proof}
 Observe that given a point in the minimal strata, it is possible to choose the whole 
$V$ as a simple neighborhood. Then the result follows easily by  repeatedly applying Lemma \ref{lemma-homotheticlemma}. 
\end{proof}

\begin{proposition}[Stratification]
\label{proposition-stratification}
Let $\F=\{L\}$ be a singular Finsler foliation with locally closed leaves on a complete Finsler manifold $(M,F)$. Then the union of the leaves with the same dimension is a disjoint union of embedded submanifolds (called stratum).
The collection of all the strata is a stratification in the usual sense.  
In addition 
the induced  foliation $\F|_{\Sigma}$ restricted to a stratum $\Sigma$  is a (regular) Finsler foliaton.
 \end{proposition}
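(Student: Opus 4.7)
The plan is to analyze each stratum locally by using the slice theorem (Proposition~\ref{proposition-sliceflat-metric}) to reduce to a singular Finsler foliation on an open subset of a Minkowski space, and then invoking Lemma~\ref{lemma4-RandersFoliation-minkowskiSpace} to describe the minimal stratum of that model.

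Fix $q\in M$ and set $k := \dim L_q$, $\Sigma := \{x\in M : \dim L_x = k\}$. By Proposition~\ref{proposition-sliceflat-metric}, there is a slice $S_q$, a Finsler metric $\hat F$ on $S_q$, and a foliated diffeomorphism $\Phi$ from $S_q$ onto an open neighborhood $\Omega$ of $0$ in a Minkowski space $(V,\hat F_q)$ carrying $q$ to $0$ and equipping $\Omega$ with an induced singular Finsler foliation $\tilde\F$ for which $\{0\}$ is a leaf. Because $S_q$ is constructed from a regular subfoliation $\F^2\subset \F$ whose plaques have dimension $k$ (Lemma~\ref{lemma-subfoliation}), a point $x\in S_q$ satisfies $\dim L_x = k$ precisely when $T_xL_x\cap T_xS_q = \{0\}$, equivalently when the $\tilde\F$-leaf through $\Phi(x)$ is zero-dimensional. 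By Lemma~\ref{lemma4-RandersFoliation-minkowskiSpace}, the union of such points is a linear subspace $W\subset V$, so $\Sigma\cap S_q = \Phi^{-1}(W\cap\Omega)$ is an embedded submanifold of $S_q$. The local product description $U\simeq P_q\times S_q$ associated with $\F^2$ (Lemma~\ref{lemma-subfoliation}) then realizes $\Sigma\cap U$ as an embedded submanifold of $M$ of dimension $k+\dim W$.

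Gluing these local descriptions as $q$ varies through $\Sigma$ gives each stratum the structure of a disjoint union of embedded submanifolds of $M$; local finiteness is clear since only the finitely many dimensions $0,1,\ldots,\dim M$ can occur, and disjointness is built into the definition. For the frontier property, that the closure of a stratum is a union of strata, one again appeals to the Minkowski model: by Lemma~\ref{lemma-homotheticlemma}, any $\tilde\F$-leaf not contained in $W$ accumulates onto $W$ under the homothetic transformations $h^+_\lambda$ as $\lambda\to 0$, so lower-dimensional strata lie in the closures of the higher-dimensional ones.

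Finally, $\F|_\Sigma$ is regular by the definition of $\Sigma$, and I would endow $\Sigma$ with the Finsler metric $F|_{T\Sigma}$, whose smoothness and strong convexity are inherited from $F$ since $\Sigma$ is embedded. To obtain the Finsler property of $\F|_\Sigma$ I plan to invoke Lemma~\ref{lemma-equidistant}: in the local description $U\cap\Sigma\simeq P_q\times \Phi^{-1}(W\cap\Omega)$, the plaques of $\F|_\Sigma$ are precisely the translates $P_y\times\{w\}$, and a computation using the Finsler submersion from Lemma~\ref{lem:exMin2} shows the distance between two such plaques depends only on the $W$-component, giving local forward and backward equidistance. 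The main obstacle I anticipate is this last step: since $F|_\Sigma$-geodesics do not in general coincide with $F$-geodesics, equidistance inside $\Sigma$ cannot be inherited directly from $(M,F)$ and must be extracted from the product structure in the slice, or alternatively by describing $\F|_\Sigma$ locally as a Finsler submersion onto the quotient $W\cap\Omega$ and invoking Corollary~\ref{equivalencedef}.
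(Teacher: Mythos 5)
Your handling of the first two assertions (each stratum is an embedded submanifold and the strata form a stratification) follows essentially the paper's route — slice reduction via Proposition \ref{proposition-sliceflat-metric} combined with Lemma \ref{lemma4-RandersFoliation-minkowskiSpace} — and is fine. The genuine gap is in the last assertion, that $\F|_{\Sigma}$ is a Finsler foliation for the induced metric $F|_{T\Sigma}$, and it is precisely the difficulty you yourself flag without resolving. Neither of your proposed routes closes it: the ``computation using Lemma \ref{lem:exMin2}'' is unspecified, and that lemma only produces a Minkowski norm on a transversal pointwise — it says nothing about distances inside $\Sigma$ computed with $F|_{T\Sigma}$; and the alternative of describing $\F|_{\Sigma}$ locally as a Finsler submersion onto $W\cap\Omega$ and invoking Corollary \ref{equivalencedef} is circular, since that corollary assumes the foliation is already Finsler (equivalently, that you already have a Finsler submersion with respect to the induced metric), which is exactly what must be proved. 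Also, the plaques of $\F|_{\Sigma}$ are only of the product form $P_y\times\{w\}$ after shrinking so that the $\F$-plaques through points of $\Sigma$ coincide with the $\F^2$-plaques.

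The missing idea, which is the paper's actual argument, is this: take $y\in\Sigma$, a plaque $P_y$ and a tubular neighborhood $U$ in $(M,F)$; for every $x\in U\cap\Sigma$ the minimal horizontal geodesic $\gamma^x_+$ from $P_y$ to $x$ lies entirely in $\Sigma$, because by the Homothetic Transformation Lemma \ref{lemma-homotheticlemma} the maps $h^+_{\lambda}$ carry the plaque of $x$ diffeomorphically onto plaques through $\gamma^x_+(\lambda r)$, so every leaf met by $\gamma^x_+$ has the same dimension as $L_x$. Hence minimizers in $M$ from $P_y$ to points of $\Sigma$ stay in $\Sigma$, so the $F$-distance and the $F|_{T\Sigma}$-distance to plaques agree locally: the transverse geometry of $\Sigma$ coincides with that of $M$, and the future and past cylinders of $(\Sigma,F|_{T\Sigma})$ with axis a plaque in $\Sigma$ are exactly the intersections with $\Sigma$ of the corresponding cylinders of $(M,F)$. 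Local forward and backward equidistance of the leaves inside $\Sigma$ then follows from equidistance in $M$, and Lemma \ref{lemma-equidistant} gives that $\F|_{\Sigma}$ is Finsler. Without this observation, equidistance with respect to $F|_{T\Sigma}$ cannot be ``extracted from the product structure'' as you hope, for the very reason you state: a priori the induced distance in $\Sigma$ only dominates the ambient one, and nothing in Lemma \ref{lem:exMin2} compares the two.
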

\begin{proof}
The fact that each connected component of a stratum is an embedded submanifold 
follows from Proposition \ref{proposition-sliceflat-metric} and Lemma \ref{lemma4-RandersFoliation-minkowskiSpace}.
The same results imply that the collection of all strata is a stratification; recall \cite[Definition 3.100]{AlexBettiol}.
Alternatively this can be proved in an analogous way as it was proved
in \cite[Proposition 6.3]{Molino} for singular Riemannian foliations.

Let us  see why $\F|_{\Sigma}$ is a regular Finsler foliation. 
Let $y\in \Sigma$ and consider a plaque $P_y$ which contains $y$ and a tubular neighborhood  $U$ of $P_{y}$.
Then,      for every $x\in U\cap\Sigma$,    there exists a minimal segment of horizontal geodesic $\gamma_{+}^{x}$ (contained  in $U$) joining the plaque $P_{y}$ to  $x$.  
By Lemma \ref{lemma-homotheticlemma},  $\gamma_{+}^{x}$ is contained in $\Sigma$. This implies that the transverse geometry of $\Sigma$ coincides with the transverse geometry of $M$. 
In particular the  future and past cylinders in $\Sigma$ coincide with the intersection of $\Sigma$ with the future and past cylinders with axis      contained    in $\Sigma$. The result now follows from Lemma \ref{lemma-equidistant}.  

\end{proof}

The next proposition can be adapted from  \cite[Proposition 6.4]{Molino}; see also \cite[Proposition 2.14]{Alex6}.

\begin{proposition}
\label{proposition-foliation-submersion}
Let $\F=\{L\}$ be a singular Finsler foliation with locally closed leaves on a complete Finsler manifold $(M,F_{1})$. Assume that there exists a complete Finsler metric $F_{2}$ such
that the foliation restricted to each stratum is a (regular) Finsler foliation with respect to $F_{2}$. Then $\F$ is a singular Finsler foliation
on $(M,F_{2})$.
\end{proposition}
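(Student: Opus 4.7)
The plan is to verify, via the local-equidistance characterization of Lemma \ref{lemma-equidistant}, that the leaves of $\F$ are locally forward and backward $F_2$-equidistant. Since the reverse metric $\tilde F_2$ satisfies the same hypotheses as $F_2$, it is enough to handle the forward case. Fix a plaque $P_q$, let $U$ be an $F_2$-tubular neighborhood of $P_q$, and define $f\colon U\to[0,\infty)$ by $f(x)=d_{F_2}(P_q,x)$. The aim is to show that $f$ is constant on every plaque $P_x\subset U$.

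First I would reduce to the case where $P_x$ sits in the regular stratum $\Sigma^{\mathrm{reg}}$. By Proposition \ref{proposition-stratification}, $\Sigma^{\mathrm{reg}}$ is open and dense in $M$, and by hypothesis $\F|_{\Sigma^{\mathrm{reg}}}$ is a regular $F_2$-Finsler foliation; by Corollary \ref{equivalencedef} it is therefore locally described by an $F_2$-Finsler submersion. Once the constancy of $f$ on regular plaques is established, the singular case follows by continuity of $f$ together with the fact that singular plaques are limits of regular ones (Proposition \ref{proposition-stratification}).

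Now take $y,y'\in P_x\subset\Sigma^{\mathrm{reg}}$ and let $\gamma\colon[0,r]\to U$ be the unique $F_2$-minimizing unit geodesic from $P_q$ to $y$ (so $r=f(y)$). The first key step is to show that $\gamma|_{(0,r)}$ is contained in a single stratum. This is a Kleiner's lemma type argument: if $\gamma$ entered a stratum of strictly lower leaf-dimension at some interior time $t_1$, then using the $F_1$-homothetic transformation (Lemma \ref{lemma-homotheticlemma}) to leave the lower stratum and enter a generic one, combined with the regular $F_2$-Finsler structure on that generic stratum provided by the hypothesis, one can modify $\gamma$ near $t_1$ to obtain a strictly shorter $F_2$-curve from $P_q$ to $y$, contradicting minimality. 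Compare with the analogous step in the proof of Proposition \ref{proposition-sliceflat-metric}.

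The second key step is transport. Once $\gamma|_{(0,r)}\subset\Sigma$ for a single stratum $\Sigma$, I apply Remark \ref{rem:transport} to the regular $F_2$-Finsler foliation $\F|_\Sigma$, transporting the geodesic segment $\gamma|_{[\epsilon,r]}$ along a curve in $P_x$ joining $y$ to $y'$. This yields an $F_2$-geodesic $\tilde\gamma_\epsilon$ of the same $F_2$-length as $\gamma|_{[\epsilon,r]}$, ending at $y'$ and beginning in the plaque $P_{\gamma(\epsilon)}$. Letting $\epsilon\to 0$ and using the $F_1$-homothetic lemma together with the compactness coming from local closedness of the leaves to identify the accumulation point of the starting points of $\tilde\gamma_\epsilon$ inside $P_q$, one produces an $F_2$-curve from $P_q$ to $y'$ of length $r$, so $f(y')\le r$. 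Exchanging the roles of $y$ and $y'$ gives equality, and then connectedness of $P_x$ yields that $f$ is constant on $P_x$. Applying the same reasoning to $\tilde F_2$ gives backward equidistance, and Lemma \ref{lemma-equidistant} concludes that $\F$ is an $F_2$-singular Finsler foliation. The main obstacle is precisely the Kleiner-type step, since the homothetic lemma in this paper is formulated for the ambient Finsler metric (here $F_1$) while the shortcut must be measured in $F_2$; the delicate point is to show that the regular-stratum $F_2$-Finsler structure interacts compatibly enough with the $F_1$-transverse geometry that the resulting modification is $F_2$-strictly shorter.
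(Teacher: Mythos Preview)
The paper does not give its own proof of this proposition: it simply states that the argument ``can be adapted from \cite[Proposition 6.4]{Molino}; see also \cite[Proposition 2.14]{Alex6}'' and leaves it at that. So there is no in-paper proof to compare against, only the cited references.

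Your outline via Lemma~\ref{lemma-equidistant} (reduce to forward equidistance, handle regular plaques first by density, then transport) is a natural strategy and is in the spirit of the Molino-type arguments. However, the obstacle you flag at the end is a genuine gap, not a technicality. The Kleiner shortcut must produce an $F_2$-shorter competitor, but the homothetic transformation lemma (Lemma~\ref{lemma-homotheticlemma}) is only available for $F_1$, since that is the metric for which $\F$ is already known to be singular Finsler. Your proposed fix---using the $F_1$-homothetic map to leave a lower stratum and then invoking the $F_2$-regular structure on the higher stratum---does not obviously yield an $F_2$-length decrease: the $F_1$-homothetic map has no a priori relation to $F_2$-length. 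The same issue recurs in your transport step when you let $\epsilon\to 0$ and appeal to the $F_1$-homothetic lemma to place the limit in $P_q$; the curves $\tilde\gamma_\epsilon$ are $F_2$-geodesics, and there is no reason their starting points should converge along an $F_1$-radial geodesic.

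The approach in the cited references is organized somewhat differently. Rather than attacking equidistance directly, one exploits that the stratification of Proposition~\ref{proposition-stratification} is determined purely by the leaf dimensions of $\F$ (hence is the same for $F_1$ and $F_2$), and argues by induction on the depth of the stratification: near a minimal stratum one uses the $F_1$-slice picture to see that $F_2$-tubes around the stratum are saturated by leaves, which reduces the problem to the regular $F_2$-foliation hypothesis on each stratum together with a continuity argument across strata boundaries. In short, the referenced proofs lean on the stratification structure to avoid ever needing an $F_2$-homothetic lemma, whereas your approach tries to run the full Kleiner/transport machinery directly in $F_2$ and gets stuck precisely where that lemma is missing.
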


We stress that  we don't need the above proposition in the proof of the main results, just the Riemannian case proved in \cite[Proposition 2.14]{Alex6}.

\section{Singular Finsler foliations on Randers spaces}
\label{section-singular-Finsler-Foliation-Randers}
In this section we prove 
Theorem \ref{theorem-Randersfoliation}. For this purpose,  we apply the
slice reduction presented in Proposition \ref{proposition-sliceflat-metric} to
relate local singular Finsler foliations on Randers spaces  with  singular Finsler foliations on Randers-Minkowski spaces.

\subsection{Singular Finsler foliations on Randers-Minkowski spaces}
\label{sec-singularFinslerfoliation-RandesMinkowski}

In this section we present some facts about singular Finsler foliations $\F=\{L\}$ on a Randers Minkowski space $(\mathbb{R}^{n}, Z)$, where
$Z(v)=\sqrt{\langle v,v \rangle}+\beta(v)$ with  $\langle \cdot, \cdot \rangle$ denoting the Euclidean product and $\beta(v)=\langle v, \dualbeta \rangle$
for a  constant vector field $\dualbeta$ with length smaller than 1.
Recall that $\dualbeta$ is multiple of the associated wind; see Lemma \ref{lemma-Randes-equivalence}.
 We will assume that the minimal leaves (i.e, the leaves with the lowest dimension)
have dimension zero and that $\{0\}=L_{0}$, i.e., the zero is one of these minimal leaves.

The main goal of this section is to prove Lemma \ref{lemma6-RandersFoliation-minkowskiSpace}, which shows that the constant vector $\dualbeta$ (and hence the wind) is tangent to the minimal stratum $\Sigma$ (i.e., the union of minimal
leaves).

\begin{lemma}
\label{lemma1-RandersFoliation-minkowskiSpace}
For each leaf $L$ of $\F$ we have that the tangent space $TL$ is perpendicular to  the constant vector field $\dualbeta$  with respect to the Euclidean metric. In particular
we infer that $L\subset V$, where $V=\{y \in \mathbb{R}^{n}, \langle y, \dualbeta\rangle=c\}$ for some $c\in\RR$.
\end{lemma}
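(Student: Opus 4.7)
My approach exploits the crucial hypothesis that $\{0\}=L_{0}$ is a zero-dimensional leaf: since $T_{0}L_{0}=\{0\}$, every affine ray $t\mapsto tv$ from the origin is vacuously orthogonal to $L_{0}$ at $t=0$, so by the Finsler-foliation property---equivalently, by Lemma~\ref{lemma-equidistant}---the leaves of $\F$ are, locally, both forward and backward equidistant from the plaque $P_{0}=\{0\}$.

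Next I would compute the two distance functions from $\{0\}$ explicitly. Because geodesics in any Minkowski space are straight lines and minimize the $Z$-length globally, one has
\[
d(0,p)\;=\;Z(p)\;=\;\alpha(p)+\beta(p),\qquad d(p,0)\;=\;Z(-p)\;=\;\alpha(p)-\beta(p),
\]
using that $\alpha$ is even and $\beta$ is linear. Consequently the tubular (resp.\ reverse tubular) neighborhoods of $\{0\}$ are $Z$-balls (resp.\ $\tilde Z$-balls) centered at $0$---which, since $\exp_{0}$ is the identity in a Minkowski space, can be chosen as large as needed to contain any prescribed $p\in L$---and their future (resp.\ past) cylinders $\mathcal{C}^{+}_{r}(\{0\})$ (resp.\ $\mathcal{C}^{-}_{r}(\{0\})$) are precisely the level sets $\{\alpha+\beta=r\}$ (resp.\ $\{\alpha-\beta=r\}$).

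Finally, applying the local forward and backward equidistance from $\{0\}$ to a plaque of $L$ inside such neighborhoods, I would conclude that $\alpha+\beta$ and $\alpha-\beta$ are both locally constant along $L$. By connectedness of $L$, both $\alpha|_{L}$ and $\beta|_{L}$ are then constant on $L$; the condition $\beta|_{L}\equiv c$ is exactly the assertion $L\subset V=\{y:\langle y,\dualbeta\rangle=c\}$, and differentiating this relation along $L$ gives $\langle u,\dualbeta\rangle=0$ for every $u\in T_{p}L$, as required. The key insight rather than a genuine obstacle is that one must use \emph{both} the forward and the backward equidistance supplied by Lemma~\ref{lemma-equidistant}: using only the forward side gives merely that $Z|_{L}=\alpha|_{L}+\beta|_{L}$ is constant on $L$, which does not suffice to separate the contributions of $\alpha$ and $\beta$; combining it with the backward side does the separation immediately.
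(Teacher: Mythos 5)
Your proof is correct, but it takes a genuinely different route from the paper's. The paper argues infinitesimally: since $\{0\}=L_0$ is a leaf, both radial lines $t\mapsto tv$ and $t\mapsto (1-t)v$ are horizontal, hence orthogonal to $L_v$ at $v$, and substituting into the explicit Randers formula of Lemma \ref{lemma-product-gvvu}, namely $g_{\pm v}(\pm v,w)=Z(\pm v)\bigl(\pm\langle v,w\rangle/\|v\|+\langle w,\dualbeta\rangle\bigr)$, gives two linear relations whose sum immediately yields $\langle w,\dualbeta\rangle=0$ for all $w\in T_vL_v$. You instead work at the level of distance functions: since Minkowski geodesics are globally minimizing straight lines, the tubular and reverse tubular neighborhoods of the point-leaf $\{0\}$ are forward and backward balls of arbitrary radius whose cylinders are the level sets of $\alpha+\beta$ and $\alpha-\beta$, so the equidistance of Lemma \ref{lemma-equidistant} makes both functions locally constant, hence constant, on each connected leaf; combining the two separates $\alpha$ from $\beta$, and differentiating $\beta|_L\equiv c$ gives the tangency statement. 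Both arguments rest on the same two ingredients — the hypothesis $L_0=\{0\}$, which makes all radial data horizontal, and the pairing of forward with backward information to isolate the odd part $\beta$ from the reversible part $\alpha$ — but yours bypasses the fundamental-tensor computation entirely (at the price of invoking the equidistance characterization and global minimization in Minkowski space), and it delivers the slightly stronger conclusion that each leaf lies in the intersection of a forward and a backward sphere centered at the origin (so $\alpha|_L$ is constant as well), which is in the spirit of what is exploited afterwards in Lemma \ref{lemma6-RandersFoliation-minkowskiSpace}; the paper's computation is shorter and purely pointwise once Lemma \ref{lemma-product-gvvu} is in place.
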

\begin{proof}

 From Lemma \ref{lemma-product-gvvu} we have:
\begin{equation}
\label{eq1-lemma1-RandersFoliation-minkowskiSpace}
\metric_{v}(v,w)=Z(v) \Big ( \frac{\langle v,w\rangle}{\|v\|}+ \langle w,\dualbeta \rangle \Big ).
\end{equation}
 Since $\F$ is a      singular    Finsler foliation then the straight lines $\RR\ni t\rightarrow t v\in\RR^n$ and $\RR\ni t\rightarrow (1-t) v\in\RR^n$  are (Finsler) orthogonal to the leaf $L_{v}$. Hence  
$\metric_{v}(v,w)=0$ and $\metric_{-v}(-v,w)=0$
for every $w\in T_{v}L$, which, together with \eqref{eq1-lemma1-RandersFoliation-minkowskiSpace},      implies     that
$\langle w, \dualbeta \rangle =0,$
for every $w\in T_{\gamma(t_{0})}L$.

\end{proof}


\begin{lemma}
\label{lemma6-RandersFoliation-minkowskiSpace}
The vector $\dualbeta$ (and hence the wind) is tangent to the minimal stratum $\Sigma$. 
\end{lemma}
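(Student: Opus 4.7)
The plan is to prove that the leaf $L_\dualbeta$ through the point $\dualbeta\in V$ reduces to the single point $\{\dualbeta\}$, which places $\dualbeta$ in the minimal stratum $\Sigma$. Since $\Sigma$ is a linear subspace by Lemma~\ref{lemma4-RandersFoliation-minkowskiSpace}, the containment $\dualbeta\in\Sigma$ is equivalent to $\dualbeta$ being tangent to $\Sigma$; the wind $W$, a nonzero scalar multiple of $-\dualbeta$, then inherits the same property. (If $\dualbeta=0$ there is nothing to prove.)

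I would cut $L_\dualbeta$ with two loci. First, applying Lemma~\ref{lemma1-RandersFoliation-minkowskiSpace} to the leaf through $\dualbeta$ gives
\[
L_\dualbeta\subseteq H:=\{y\in V:\langle y,\dualbeta\rangle=|\dualbeta|^2\}.
\]
Second, I would show that the Finsler distance $y\mapsto Z(y)=d^+(0,y)$ is constant on every leaf. Since $\{0\}=L_0$ is a leaf and $\F$ is Finsler, Lemma~\ref{lemma-equidistant} says that every plaque contained in a tubular neighborhood of $\{0\}$ lies in some future cylinder $\mathcal{C}^+_r(\{0\})=\{Z=r\}$. The homothetic transformation lemma (Lemma~\ref{lemma-homotheticlemma}) in the Minkowski setting acts as the linear dilation $x\mapsto \lambda x$ and sends plaques to plaques; rescaling any $q\in V$ by a small $\lambda>0$ into the tubular neighborhood and then applying $h^+_{1/\lambda}$ shows that the plaque of $q$ is contained in $\{Z=Z(q)\}$. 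Connectedness of the leaves then yields $Z\equiv Z(\dualbeta)$ on $L_\dualbeta$, so
\[
L_\dualbeta\subseteq S:=\{y\in V:Z(y)=Z(\dualbeta)\}.
\]

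Intersecting $H$ with $S$ finishes the argument: on $H$, $Z(y)=|y|+\langle y,\dualbeta\rangle=|y|+|\dualbeta|^2$, so imposing $Z(y)=Z(\dualbeta)=|\dualbeta|(1+|\dualbeta|)$ forces $|y|=|\dualbeta|$. Combined with $\langle y,\dualbeta\rangle=|\dualbeta|^2$, Cauchy--Schwarz is attained with equality, which gives $y=\dualbeta$. Hence $H\cap S=\{\dualbeta\}$ and $L_\dualbeta=\{\dualbeta\}$, as desired.

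The main subtlety, and the only nontrivial step, is the passage from ``$Z$ is constant on a plaque'' (valid a priori only inside a tubular neighborhood of $\{0\}$) to ``$Z$ is constant on the entire leaf $L_\dualbeta$''. This is exactly the place where the homothetic lemma is doing essential work: it allows one to propagate the Finsler-equidistance statement globally using that, from the singleton $\{0\}$, the foliation is invariant under genuine linear dilations of $V$.
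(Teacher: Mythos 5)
Your proof is correct and follows essentially the same route as the paper: the leaf through a positive multiple of $\dualbeta$ lies both in a level set of $Z$ (a future sphere with axis the leaf $\{0\}$) and in the hyperplane given by Lemma \ref{lemma1-RandersFoliation-minkowskiSpace}, these meet in a single point, and Lemma \ref{lemma4-RandersFoliation-minkowskiSpace} then concludes. The only cosmetic differences are that you verify the single-point intersection by an explicit Randers/Cauchy--Schwarz computation where the paper invokes tangency plus strong convexity of the indicatrix, and that you spell out via the homothetic lemma why $Z$ is constant on leaves, which the paper asserts directly.
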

\begin{proof}
  Recall that the tangent space to the future sphere $S^+(0)=\{v\in \RR^n: Z(v)=1\}$ is 
	given by the vectors $w\in \RR^n$ such that $\metric_{v}(v,w)=0$ as $w(Z)=2g_v(v,w)$. 
	Let $\delta>0$ be such that $p=\delta \dualbeta\in S^+(0)$. From the last observation and \eqref{eq1-lemma1-RandersFoliation-minkowskiSpace}, it follows that $\dualbeta$ is orthogonal (with the Euclidean metric) to the tangent space to $S^+(0)$ in $p$.
	Alternatively this fact can  be directly checked through calculations using Lemma \ref{lemma-Randes-equivalence}.
	Since $\F$ is a singular Finsler foliation, the leaf $L_{p}$ is contained in  $S^{+}(0)$.
 On the other hand, it follows from  Lemma \ref{lemma1-RandersFoliation-minkowskiSpace} that $L_{p}$ is contained in $V=\{y \in \mathbb{R}^{n}, \langle y, \dualbeta\rangle=c\}$ for some $c\in\RR$, which is tangent to $S^+(0)$ as $\dualbeta$ is orthogonal to $S^+(0)$. By the strong convexity of $S^+(0)$, $V\cap S^+(0)=\{p\}$ and therefore $L_{p}$ must be just the point $p$.
In other words, we conclude that $0$ and $p=\delta \dualbeta$ (for some $\delta>0$) are contained in the minimal stratum $\Sigma$ and Lemma \ref{lemma4-RandersFoliation-minkowskiSpace} concludes.

\end{proof}

\begin{remark}
\label{remark-F-SRF-randersminkowiski}
       Let $\Sigma^{\perp}$ be the orthogonal space to $\Sigma$ with respect to the Euclidean metric  $\langle \cdot, \cdot \rangle$  at some point $p\in \Sigma$.  Then the above lemma implies that 
the vector $\dualbeta$ is orthogonal  to $\Sigma^{\perp}$ and hence $\F \cap \Sigma^{\perp}$ is an S.R.F.
Using the homothetic transformation lemma, it is possible to prove  that \emph{$\F$ is a singular Riemannian foliation 
on $(\mathbb{R}^{n}, \langle \cdot, \cdot \rangle)$}.      
\end{remark}


\subsection{Proof of Theorem \ref{theorem-Randersfoliation}}

The strategy of the proof is simple. In order to prove that a singular Finsler foliation $\F$ on a Randers space $M$  with Zermelo data 
$(\metriczermelo,W)$ 
is a singular Riemannian foliation  with respect to the metric $\metriczermelo$,
we start by showing that the wind $W$ is tangent to each singular stratum, see Proposition \ref{proposition-wind-tangent}. This is done  by  applying  the \emph{slice  foliation};     recall
Proposition \ref{proposition-sliceflat-metric}.    

Once we have shown that the wind is tangent to each  stratum $\Sigma$, we have that $\F|_{\Sigma}$, i.e., the restricted foliation $\F$ to the stratum $\Sigma$,
is a (regular) Finsler foliation on the Randers space $\Sigma$ with Zermelo data $(\metriczermelo,W)$. From Proposition \ref{proposition-Randes-strata} we will conclude that $\F|_{\Sigma}$ is a Riemannian foliation with respect to $\metriczermelo$. 

Finally we will    apply \cite[Proposition 2.15]{Alex6} to conclude that $\F$ is an S.R.F. on $(M,\metriczermelo)$, because $\F|_{\Sigma}$ is a Riemannian foliation on  each stratum $(\Sigma,\metriczermelo)$.  Alternatively one can use  Proposition \ref{proposition-foliation-submersion}.


\begin{proposition}
\label{proposition-wind-tangent}
Let $\F=\{L\}$ be  a singular Finsler foliation with locally closed leaves on a  Randers space $(M,Z)$
 with Zermelo data $(\metriczermelo, W)$. 
Then the wind $W$ is always tangent to the strata of $\F$.
\end{proposition}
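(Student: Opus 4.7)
The plan is to reduce, via the slice theorem, to the Randers-Minkowski setting of Subsection \ref{sec-singularFinslerfoliation-RandesMinkowski}, where the tangency of the wind to the minimal stratum is exactly the content of Lemma \ref{lemma6-RandersFoliation-minkowskiSpace}.

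Fix $q\in\Sigma$ and consider the foliated submersion $\pi\colon U\to S_q$ from the proof of Proposition \ref{proposition-metricFS}, which equips $S_q$ with a Finsler metric $\hat F$ such that each $d\pi_x\colon (T_xM,Z_x)\to (T_xS_q,\hat F_x)$ is a linear Finsler submersion. Since $Z$ is Randers, Proposition \ref{proposition-translation-of-submersion} guarantees that $\hat F$ is Randers as well, that $W$ is $\pi$-projectable, and that its projection $\hat W:=d\pi(W)$ is the wind of $\hat F$. The slice reduction (Proposition \ref{proposition-sliceflat-metric}) then identifies $(\F_q,\hat F)$, via the exponential map at $q$, with an open subset of a singular Finsler foliation on the Minkowski space $(T_qS_q,\hat F_q)$, where $\hat F_q$ is the tangent Minkowski norm of $\hat F$ at $q$; since $\hat F$ is Randers, $\hat F_q$ is a Randers-Minkowski norm whose constant wind vector equals $\hat W(q)$. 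Transversality of $S_q$ to $\F$ (Proposition \ref{proposition-submer}) yields that the plaque of $\F_q$ through any $x\in\Sigma\cap S_q$ has dimension $\dim L_x-\dim L_q=0$, so the minimal stratum of the Minkowski foliation corresponds, at the tangent space level at $q$, to $T_q(\Sigma\cap S_q)$. Lemma \ref{lemma6-RandersFoliation-minkowskiSpace} then gives $\hat W(q)\in T_q(\Sigma\cap S_q)\subset T_q\Sigma$.

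To conclude, use the splitting $T_qM=T_qL_q\oplus T_qS_q$, in which $\ker d\pi_q=T_qL_q$ and $d\pi_q|_{T_qS_q}=\id$. Writing $W(q)=W_L+W_S$ accordingly, $W_S=d\pi_q(W(q))=\hat W(q)\in T_q\Sigma$ by the previous step, and $W_L\in T_qL_q\subset T_q\Sigma$ since $L_q\subset\Sigma$; hence $W(q)\in T_q\Sigma$. The only delicate step is the persistence of the Randers structure through both halves of the reduction: the submersion half is exactly Proposition \ref{proposition-translation-of-submersion}, while the passage to the tangent Minkowski limit is automatic, since the tangent Minkowski norm of a Randers metric at a point is Randers-Minkowski with wind equal to the ambient wind evaluated at that point.
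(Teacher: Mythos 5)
Your proposal is correct and follows essentially the same route as the paper: reduce via the slice $(S_q,\hat F)$ of Proposition \ref{proposition-metricFS} and the slice reduction of Proposition \ref{proposition-sliceflat-metric} to a Randers--Minkowski space, use Proposition \ref{proposition-translation-of-submersion} to see that $\hat F_q$ is Randers with wind $d\pi_q(W_q)$, apply Lemma \ref{lemma6-RandersFoliation-minkowskiSpace}, and finish with the decomposition $W_q=d\pi_q(W_q)+W^{\perp}_q$ with $W^{\perp}_q\in T_qL_q\subset T_q\Sigma$. You in fact spell out the identification of the minimal stratum of the Minkowski foliation with $\Sigma\cap S_q$ more explicitly than the paper does (only note that the transversality of $S_q$ to the plaques comes from Lemma \ref{lemma-subfoliation} rather than Proposition \ref{proposition-submer}, and that only the pointwise linear Finsler submersion at $q$, not global projectability of $W$ over $U$, is needed).
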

\begin{proof}
Let $(S_q,\hat F)$ be the singular Finsler foliation obtained in Proposition \ref{proposition-metricFS} for $q\in M$.
Let $d\pi_q:T_{q}M\to T_qS_{q}$ be the map defined in \eqref{lin-sub}. Observe that $\hat F_q$ is a Zermelo metric with wind $d\pi_q(W)$ (see Proposition \ref{proposition-translation-of-submersion}).
 Recall that the fiber $(d\pi)^{-1}_{q}(0)$ coincides with $T_{q}L_{q}$, which is 
contained in $T_{q}\Sigma$, i.e., in the tangent space of the stratum containing  the point $q$. Note that $W_{q}=d\pi_{q}(W)+W^{\perp}_{q}$, where
$W^{\perp}_{q}\in T_{q}L_{q}\subset T_{q}\Sigma$. Therefore, in order to prove that $W_{q}\in T_{q}\Sigma$, it suffices to check that
$d\pi_{q}(W_{q})\in T_{q}\Sigma$. But,  on the other hand, this follows from 
Lemma \ref{lemma6-RandersFoliation-minkowskiSpace} and Proposition \ref{proposition-sliceflat-metric}.

\end{proof}

\begin{proposition}
\label{proposition-Randes-strata}
Let $\F=\{L\}$ be a (regular) Finsler foliation on a  Randers space $(\Sigma,Z)$ with Zermelo data $(\metriczermelo,W)$. 
 Then $\F$  is a (regular) Riemannian foliation with respect to $\metriczermelo$. 
\end{proposition}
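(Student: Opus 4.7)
The plan is to reduce to the local structure theorem for regular Finsler foliations and then apply the rigidity of Finsler submersions in the Randers setting (Proposition \ref{proposition-translation-of-submersion}). Since being a Riemannian foliation is a local condition, it suffices to show that every point $q\in\Sigma$ has an open neighborhood on which the plaques of $\F$ are $\metriczermelo$-equidistant.

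First I would invoke Corollary \ref{equivalencedef}: on a sufficiently small saturated open set $U$ around $q$, the foliation $\F|_U$ is realized as the connected fibers of a Finsler submersion $\pi:(U,Z)\to (N,\widehat Z)$, with $\widehat Z$ the unique Finsler metric on $N$ that makes $\pi$ Finsler. Here I may need to pass to a plaque-neighborhood and quotient by the leaves in order to have $\pi$ genuinely defined as a submersion onto a manifold $N$, which is standard for regular foliations.

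Next, since $Z$ is Randers with Zermelo data $(\metriczermelo,W)$, Proposition \ref{proposition-translation-of-submersion} applies to $\pi$ and yields two facts: $\widehat Z$ is also of Randers type, say with Zermelo data $(\widehat{\metriczermelo},\widehat W)$; and $\pi$ is the translation of a Riemannian submersion $\pi:(U,\metriczermelo)\to (N,\widehat{\metriczermelo})$. Since the fibers of this Riemannian submersion coincide with the fibers of the original Finsler submersion (translations by a vector field do not affect fibers), they are precisely the plaques of $\F|_U$. As the fibers of a Riemannian submersion are equidistant for the base Riemannian metric, the plaques of $\F|_U$ are $\metriczermelo$-equidistant, which is exactly the condition for $\F$ to be a Riemannian foliation on $(\Sigma,\metriczermelo)$.

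The substantive work has already been carried out in Proposition \ref{proposition-translation-of-submersion}, which extracts an underlying Riemannian submersion from any Randers Finsler submersion and, as a byproduct, shows that the wind is projectable. Given that, the only real obstacle in this proof is the mild technical step of producing the local submersion chart of Corollary \ref{equivalencedef} on an open neighborhood of $q$; once this is in place, the conclusion follows immediately by combining the two previous results.
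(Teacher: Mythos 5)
Your proposal is correct and follows essentially the same route as the paper: the paper's proof also observes that a regular Finsler foliation is locally given by a Finsler submersion (Corollary \ref{equivalencedef}) and then concludes via Proposition \ref{proposition-translation-of-submersion}, which exhibits the underlying Riemannian submersion for $\metriczermelo$. Your spelled-out verification that the fibers are unchanged under translation and hence $\metriczermelo$-equidistant is just the explicit form of the step the paper leaves implicit.
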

\begin{proof}
  As a regular Finsler foliation is a Finsler submersion locally (see Corollary~ \ref{equivalencedef}), the result follows from Proposition \ref{proposition-translation-of-submersion}. 

\end{proof}

\begin{remark}
Consider a singular Finsler foliation $\F$        with locally closed leaves      on  a Randers manifold $(M,Z)$ with Zermelo data $(\metriczermelo,W)$. 
Then  Propositions~\ref{proposition-wind-tangent}, \ref{proposition-stratification}
and \ref{proposition-translation-of-submersion}
 imply that  the  wind $W$ is an $\F$-foliated vector field. 
\end{remark}


\subsection{Proof of Corollary \ref{the-corollary}}

In this section we assume that  the wind  $W$ of a  Randers space 
$(M,Z)$ with Zermelo data $(\metriczermelo,W)$ is an infinitesimal homothety of $\metriczermelo$, i.e., 
$\mathcal{L}_{W} \metriczermelo=-\sigma \metriczermelo$.
We also assume that $\metriczermelo$ and the wind $W$ are complete. This implies that the metric $Z$ is also complete;
recall \cite[Theorem 1.2]{Javaloyes-Vitorio-Zermelo}  and \cite{Huang-Mo-geodesics}. 

Since $\F$ is a singular Finsler foliation  on $(M,Z)$, 
Theorem \ref{theorem-Randersfoliation} implies that $\F$  is a singular Riemannian
foliation on $(M,\metriczermelo)$. Therefore, it follows from \cite{AlexToeben2} that each regular leaf of $\F$ is equifocal with 
respect to $\metriczermelo$. In other words,
we have the following result:
\begin{lemma}[\cite{AlexToeben2}]
\label{lemma-SRF-equifocal} 
For each $p$ in a regular leaf $L$, there exists a neighborhood $U\subset L$ of $p$ such that for each (unit) basic vector 
$\tilde{\xi}$  on $U$ (with respect to $\metriczermelo$)  the endpoint map $\eta_{s \tilde{\xi}}:U\to M$, 
defined as $\eta_{s\tilde{\xi}}(x)=\exp_{x}(s\tilde{\xi})$, with $s>0$ 
fulfills the following properties: 
\begin{itemize}
\item the derivative of $\eta_{s\tilde{\xi}}$ has constant rank,
\item $\eta_{s\tilde{\xi}}(U)$ is an open set of $L_{q}$, where $q=\eta_{s\tilde{\xi}}(p)$.
\end{itemize}
\end{lemma}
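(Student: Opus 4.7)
The plan is to follow the Alexandrino--T\"oben strategy, which deduces equifocality from the local structure of singular Riemannian foliations together with the homothetic transformation lemma and a Jacobi field analysis. The key input is that, locally in the regular stratum, $\F$ is given by a Riemannian submersion (a Riemannian analogue of Corollary \ref{equivalencedef}), and that basic vector fields correspond to projectable vector fields on the local leaf space. First I would fix a regular $p \in L$, a plaque $U = P_p$, and choose $s > 0$ so that the geodesic $\gamma(t) = \exp_p(t\tilde{\xi}(p))$ is defined on an open interval containing $[0,s]$.

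The crucial step is to show $\eta_{s\tilde{\xi}}(U) \subset L_q$ where $q = \gamma(s)$. For each $x \in U$, consider the geodesic $\tilde{\gamma}_x(t) = \exp_x(t\tilde{\xi}(x))$, which is orthogonal to the leaf it starts on. If $\gamma$ stays in the regular stratum on $[0,s]$, then $\tilde{\gamma}_x$ is a horizontal lift (through $x$) of $\pi \circ \gamma$, where $\pi$ is the local Riemannian submersion describing $\F$ near $\gamma$; since horizontal lifts of geodesics are geodesics and respect fibers, $\tilde{\gamma}_x(s)$ lies in $\pi^{-1}(\pi(q)) \subset L_q$. If $\gamma$ meets a singular stratum, I would apply the slice reduction (Proposition \ref{proposition-sliceflat-metric}) at each singular time: in the slice, the homothetic transformation lemma sends plaques to plaques, and patching this with the regular-stratum argument on each remaining arc yields $\tilde{\gamma}_x(s) \in L_q$ globally.

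For the constant-rank statement, use that $\eta_{s\tilde{\xi}}(U) \subset L_q$ to reduce to studying the differential $d\eta_{s\tilde{\xi}} : T_x L \to T_{\eta_{s\tilde{\xi}}(x)} L_q$. Vectors in the kernel correspond to variations of $\tilde{\gamma}_x$ through horizontal geodesics ending in the same plaque of $L_q$, i.e., to holonomy-type Jacobi fields along $\tilde{\gamma}_x$ vanishing at $t = s$. By smoothness of the Jacobi equation in the parameter $x$ and the fact that the destination leaves all have the same dimension (since $\eta_{s\tilde{\xi}}(U)$ sits in a single stratum around $q$), the kernel has locally constant dimension, giving the constant rank; openness of $\eta_{s\tilde{\xi}}(U)$ in $L_q$ then follows from the rank equaling $\dim L_q$ at $p$, again by the local submersion picture.

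The main obstacle is the transition through singular strata: when $\gamma$ touches a singular leaf, the naive horizontal lift of a Riemannian submersion is no longer available, and one must combine the slice reduction with a Kleiner-type convexity argument (as in the proof of Proposition \ref{proposition-sliceflat-metric}) to show that the lifted endpoints still land in the same plaque of $L_q$. This is also what forces the drop in rank at singular $q$ to be uniform in $x \in U$, rather than only holding at $p$; verifying this uniformity is the most delicate piece of the argument, and it is where completeness of $\metriczermelo$ is genuinely used.
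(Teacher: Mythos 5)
You should first be aware that the paper contains no proof of this lemma: it is precisely the main theorem of \cite{AlexToeben2} (regular leaves of a singular Riemannian foliation on a complete Riemannian manifold are equifocal), invoked for the singular Riemannian foliation on $(M,\metriczermelo)$ produced by Theorem \ref{theorem-Randersfoliation}, so the paper's ``proof'' is the citation itself. Your proposal attempts to reconstruct the Alexandrino--T\"oben argument, and its skeleton (local Riemannian submersions and horizontal lifts on the regular stratum, slice reduction plus the homothetic transformation lemma at singular points) is indeed the strategy of the cited paper.

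However, as a proof the sketch has genuine gaps at the two decisive steps. For the constant-rank statement, smooth dependence of the Jacobi equation on $x$ only gives semicontinuity of the nullity of $d\eta_{s\tilde{\xi}}$, and ``the destination leaves have the same dimension'' does not prevent the kernel dimension from jumping from point to point; the actual mechanism is an equivariance argument: transporting the horizontal geodesic along curves in the leaf (as in Remark \ref{rem:transport}) produces, for any $x_1,x_2\in U$, foliated diffeomorphisms $\phi$ of neighborhoods in $U$ and $\psi$ of neighborhoods in $L_q$ with $\eta_{s\tilde{\xi}}\circ\phi=\psi\circ\eta_{s\tilde{\xi}}$, and it is this conjugation that forces equal ranks. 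Your Jacobi-field count, as written, proves nothing about constancy. Second, the passage through singular strata is exactly the hard content of the cited theorem, and in your sketch it is only asserted (``patching \dots yields $\tilde{\gamma}_x(s)\in L_q$ globally''): one must show that the transported geodesics remain horizontal, keep the same projection, and land in the same plaques when the geodesic crosses, not merely touches, a singular leaf; this needs the Kleiner-type argument together with the homothetic lemma in the slice, and it is here (not in a ``uniform rank drop'') that local closedness of the slice leaves and completeness are used. Finally, openness of $\eta_{s\tilde{\xi}}(U)$ in $L_q$ does not follow from ``rank $=\dim L_q$ at $p$'', which you have not established when $q$ is singular or focal; it should instead be extracted from the fact that near such times the endpoint map factors through the projection onto the plaque, which is a submersion by Proposition \ref{proposition-submer}.
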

Our goal in this section is to prove that $L$ is equifocal with respect to $Z$.
 More precisely, we are going to check  that the above properties hold 
for each $t\xi$ where $\xi$ is a  (unit) normal vector field (with respect to $Z$) on the neighborhood $U$ defined above and  $t>0$. 

In order to prove this, we need to recall the next   result.

	\begin{lemma}[\cite{Javaloyes-Vitorio-Zermelo,Huang-Mo-geodesics,Robles-geodesics-constant-curvature}]
	\label{c.e.geodesicas}
	A curve  $\gamma:\mathbb{R}\to M$ is a geodesic with unit velocity on  $(M,Z)$ if and only if 
		$\gamma(t)=\varphi_t(\tilde{\gamma}(t))$ where  $\tilde{\gamma}:\mathbb{R}\to M$
		is a reparametrization of geodesic with respect to $\metriczermelo$  so that 
		$\metriczermelo(\tilde{\gamma}'(t),\tilde{\gamma}'(t))=e^{-\sigma t}$, where  $\varphi$ is the flow of  $W$. In particular, by deriving  at $t=0$, 
		we have that $\gamma'(0)=\tilde{\gamma}'(0)+W(\gamma(0))$. 
	\end{lemma}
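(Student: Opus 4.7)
The plan is to prove the equivalence by a direct ansatz: define $\tilde\gamma(t):=\varphi_{-t}(\gamma(t))$, so that automatically $\gamma(t)=\varphi_t(\tilde\gamma(t))$, and then extract both the speed condition on $\tilde\gamma$ and the pregeodesic condition from the unit-speed Zermelo geodesic equation for $\gamma$.

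First I would compute the velocities by the chain rule applied to $\gamma(t)=\varphi_t(\tilde\gamma(t))$. Since $\partial_t\varphi_t|_p=W(\varphi_t(p))$, we get
\begin{equation*}
\gamma'(t)=W(\gamma(t))+d\varphi_t|_{\tilde\gamma(t)}(\tilde\gamma'(t)),
\end{equation*}
which at $t=0$ already yields the relation $\gamma'(0)=\tilde\gamma'(0)+W(\gamma(0))$ claimed at the end of the statement. The unit-speed condition $Z(\gamma')=1$ is, by the defining equation of the Zermelo data, equivalent to $\metriczermelo(\gamma'-W,\gamma'-W)=1$, and substituting the displayed identity turns this into $\metriczermelo_{\gamma(t)}(d\varphi_t\tilde\gamma'(t),d\varphi_t\tilde\gamma'(t))=1$. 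The infinitesimal homothety condition $\mathcal L_W\metriczermelo=-\sigma\metriczermelo$ integrates to $\varphi_t^{*}\metriczermelo=e^{-\sigma t}\metriczermelo$ (this is the standard ODE argument $\tfrac{d}{dt}\varphi_t^{*}\metriczermelo=\varphi_t^{*}\mathcal L_W\metriczermelo=-\sigma\varphi_t^{*}\metriczermelo$), so the previous identity becomes exactly $\metriczermelo(\tilde\gamma'(t),\tilde\gamma'(t))=e^{\sigma t}$, which matches the claim up to the sign convention for $\sigma$.

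For the pregeodesic property, my preferred route is variational. A unit-speed $Z$-geodesic $\gamma$ is characterized as a critical point of the $Z$-energy functional $E_Z(\gamma)=\tfrac12\int Z(\gamma')^2\,dt$ among compactly supported variations. Plugging in $\gamma=\varphi_\bullet\circ\tilde\gamma$ and using the velocity decomposition plus $\varphi_t^{*}\metriczermelo=e^{-\sigma t}\metriczermelo$, the Zermelo integrand becomes, after expansion, the sum of $e^{-\sigma t}\metriczermelo(\tilde\gamma',\tilde\gamma')$ and an exact derivative coming from the wind term (using that $W$ is tangent to its own flow); the extra term contributes only boundary data under variations of $\tilde\gamma$. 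Thus the Euler--Lagrange equations for $\gamma$ on $(M,Z)$ are equivalent to those for the weighted Riemannian energy $\tfrac12\int e^{-\sigma t}\metriczermelo(\tilde\gamma',\tilde\gamma')\,dt$, whose critical points are precisely reparametrizations of $\metriczermelo$-geodesics with $\metriczermelo(\tilde\gamma',\tilde\gamma')\propto e^{\sigma t}$, the speed profile we already derived. This simultaneously proves both implications.

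The main obstacle is the rigorous justification that the wind-contribution in the expanded energy is an exact one-form along every variation, i.e.\ that it integrates out to boundary values. Concretely, one needs to check the identity
\begin{equation*}
\tfrac{d}{dt}\!\bigl(\text{something depending on }\tilde\gamma,W\bigr)=2\,\metriczermelo(d\varphi_t\tilde\gamma',W(\gamma))+\metriczermelo(W,W)\circ\gamma
\end{equation*}
along any variation, which requires combining the homothety identity $\mathcal L_W\metriczermelo=-\sigma\metriczermelo$ with $\nabla_WW$-type terms via the Koszul formula. The Killing case $\sigma=0$ is the textbook Randers navigation computation; the homothety extension requires bookkeeping the exponential factor but introduces no genuinely new ingredient. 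Once this exact-derivative identity is in hand, the rest is a direct Euler--Lagrange comparison, and the ``if'' direction follows by running the same argument backwards starting from an $\metriczermelo$-geodesic with the prescribed speed.
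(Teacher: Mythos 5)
The first half of your argument is sound, and it is the easy half: setting $\tilde\gamma(t)=\varphi_{-t}(\gamma(t))$, the chain rule gives $\gamma'(t)=W(\gamma(t))+d\varphi_t(\tilde\gamma'(t))$ (hence the relation at $t=0$), and integrating $\mathcal L_W\metriczermelo=-\sigma\metriczermelo$ to $\varphi_t^{*}\metriczermelo=e^{-\sigma t}\metriczermelo$ converts the unit-speed condition $\metriczermelo(\gamma'-W,\gamma'-W)=1$ into $\metriczermelo(\tilde\gamma',\tilde\gamma')=e^{\sigma t}$; your remark that this differs from the stated $e^{-\sigma t}$ only through the sign convention for $\sigma$ is fair. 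Note also that the paper itself gives no proof of this lemma: it is quoted from Robles, Huang--Mo and Javaloyes--Vit\'orio, where the equivalence of the two geodesic equations is established by direct computation with the flow and the geodesic ODEs (or via the time-optimality description of navigation), not by a variational splitting.

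The genuine gap is the core variational step. The claim that $Z(\gamma')^2$ equals $e^{-\sigma t}\metriczermelo(\tilde\gamma',\tilde\gamma')$ plus an exact $t$-derivative along arbitrary (non-unit-speed) curves is false: the navigation relation only says $Z(v)=1$ if and only if $\metriczermelo(v-W,v-W)=1$; off the indicatrix $Z(v)^2\neq\metriczermelo(v-W,v-W)$, because $Z$ is positively homogeneous about the origin, not about $W$ --- in Randers form $Z(v)^2=a(v,v)+2\sqrt{a(v,v)}\,\beta(v)+\beta(v)^2$, and the cross term is not ``quadratic plus affine'' in $v$, so no decomposition into a Riemannian energy density plus a null Lagrangian exists. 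Moreover, the identity you flag as the ``main obstacle'' cannot hold in general: matching $\tfrac{d}{dt}S(t,\tilde\gamma(t))$ with a term affine in $\tilde\gamma'$ whose linear part is $2(\varphi_t^{*}\metriczermelo(W,\cdot))(\tilde\gamma')$ forces the one-form $\metriczermelo(W,\cdot)$ to be closed, which is not implied by $W$ being Killing or homothetic (a Killing field with closed metric dual is parallel), so this is not bookkeeping but an obstruction; nor is the Killing case handled by ``the textbook computation'' in this way --- the closed-$\beta$ trick concerns Randers metrics with $d\beta=0$, a different hypothesis. Finally, the sketch is internally inconsistent when $\sigma\neq0$: critical curves of $\tfrac12\int e^{-\sigma t}\metriczermelo(\tilde\gamma',\tilde\gamma')\,dt$ satisfy $\tfrac{D\tilde\gamma'}{dt}=\sigma\tilde\gamma'$, hence $\metriczermelo(\tilde\gamma',\tilde\gamma')\propto e^{2\sigma t}$, contradicting the profile $e^{\sigma t}$ you derived from unit speed, so even granting the splitting the two halves of your argument cannot both be true. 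A correct proof must instead verify directly that $t\mapsto\varphi_t(\tilde\gamma(t))$ solves the Randers geodesic equation, using $\varphi_t^{*}\metriczermelo=e^{-\sigma t}\metriczermelo$ and the relation between the Levi-Civita connection of $\metriczermelo$ and the geodesic spray of $Z$, as in the references cited in the statement.
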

	
	From Lemma  \ref{lemma-product-gvvu}, we can also conclude that: 
	
	\begin{lemma}
	\label{orthogonal}
	If $\xi$ is a unit vector field on $U$ orthogonal to $L$  
	with respect to $Z$ then $\xi-W$ is orthogonal to $L$ (with respect to $\metriczermelo$). 
\end{lemma}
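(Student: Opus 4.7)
The plan is to read the conclusion directly off the explicit formula for the fundamental tensor of a Randers metric given in Lemma~\ref{lemma-product-gvvu}. Recall that lemma provides the identity
\begin{equation*}
 g_v(v,u)=\frac{Z(v)}{\mu \alpha(v)}\,\metriczermelo\bigl(v-Z(v)W,\,u\bigr),
\end{equation*}
valid for any nonzero $v$ in a Randers space with Zermelo data $(\metriczermelo,W)$, where $\mu=1-a(\vec\beta,\vec\beta)>0$ and $\alpha(v)=\sqrt{a(v,v)}>0$.

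The hypothesis says that $\xi$ is a unit vector field along $U\subset L$ (so $Z(\xi)=1$) and that $g_\xi(\xi,u)=0$ for every $u\in TL$. Substituting $v=\xi$ in the boxed identity and using $Z(\xi)=1$ gives
\begin{equation*}
0=g_\xi(\xi,u)=\frac{1}{\mu\,\alpha(\xi)}\,\metriczermelo(\xi-W,u)
\end{equation*}
for every $u\in TL$. Since the scalar prefactor $1/(\mu\,\alpha(\xi))$ is strictly positive (both $\mu>0$ by the Randers condition and $\alpha(\xi)>0$ since $\xi\neq0$), we may cancel it and conclude $\metriczermelo(\xi-W,u)=0$ for every $u\in TL$, which is exactly the assertion that $\xi-W$ is $\metriczermelo$-orthogonal to $L$.

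There is no real obstacle here: the statement is a pointwise algebraic consequence of the Randers fundamental tensor formula, with no need to invoke any of the foliation machinery, completeness, or the infinitesimal homothety hypothesis. The only thing to keep in mind is that the formula of Lemma~\ref{lemma-product-gvvu} is phrased for $g_v(v,\cdot)$, so it is essential that we are evaluating the fundamental tensor at the same vector $\xi$ as the one in the first slot, which is precisely what orthogonality of a single unit normal $\xi$ gives us.
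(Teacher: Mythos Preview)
Your proof is correct and is exactly the approach the paper takes: the paper simply states that the lemma follows from Lemma~\ref{lemma-product-gvvu} without writing out the computation, and you have supplied precisely those details.
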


We are now ready to prove the corollary. Let $\xi$ be  a unit basic vector field on $U$ with respect to $Z$. 
For $x\in U$ set $t\to \gamma_{x}(t)=\eta_{t\xi}(x)$. Let $\tilde{\gamma}_{x}$ be the reparametrization of geodesic defined in
Lemma \ref{c.e.geodesicas}, i.e., $\gamma_{x}(t)=\varphi_{t}(\tilde{\gamma}_{x}(t))$.
Set $\tilde{\xi}:=\xi-W$. Note that $\tilde{\gamma}_{x}'(0)=\tilde{\xi}(x)$ (recall Lemma \ref{c.e.geodesicas}).
Due to Lemma \ref{orthogonal}, the vector field $\tilde{\xi}$ is orthogonal to $L$. Also note that $\tilde{\xi}$ is projectable
and hence basic, because $W$ is a foliated vector field (recall Theorem \ref{theorem-Randersfoliation}) and  that $\metriczermelo(\tilde{\xi},\tilde{\xi})=1$. 
We can now infer that 
\begin{equation}
\label{eq-endpointmaps-Z-h}
\eta_{t\xi}(x)=\gamma_{x}(t)=\varphi_{t}(\tilde{\gamma}_{x}(t))=\varphi_{t}\circ\eta_{s\tilde{\xi}}(x)
\end{equation}
where $s=\frac{-2}{\sigma}\Big( \exp(-\frac{\sigma}{2}t)-1 \Big)$, if $\sigma\neq 0$ and $s=t$, if $\sigma=0$. Since $t$ and $s$ are fixed and $W$ is foliated (i.e., the flow $\varphi_{t}$ sends leaves to leaves) we conclude  from the above
equation and Lemma \ref{lemma-SRF-equifocal} that $L$ is equifocal, i.e.,  
 the derivative of $\eta_{t \xi}$ has constant rank and $\eta_{t \xi}(U)$ is an open set of $L_{q}$, where $q=\eta_{t \xi}(p)$.


\bibliographystyle{amsplain}

\begin{thebibliography}{10}


%
\bibitem{Alexandrino-isoparametricsmaps} M. M. Alexandrino, \emph{Polar foliations and isoparametric maps.}  
Ann. Global. Anal. Geom.  \textbf{41} (2012) no. 2, 187--198.  

\bibitem{Alex6} M. M. Alexandrino, \emph{Desingularization of singular Riemannian foliation}. 
Geom. Dedicata  \textbf{149} (2010) no. 1, 397--416.
 

\bibitem{Alex-fundamentalgroup}
M.M. Alexandrino, \emph{On polar foliations and fundamental group}. Results Math. \textbf{60} (2011) 
no.1, 213–-223.


\bibitem{AlexBettiol} M. M. Alexandrino and R. Bettiol, 
\emph{Lie groups and geometric aspects of isometric actions}. Springer Verlag, 2015. 



\bibitem{AlexRafaelDirk} M. M. Alexandrino, R. Briquet and D. T\"{o}ben, 
\emph{Progress in the theory of singular Riemannian foliations}.
 Differential Geom. Appl. \textbf{31} (2013) 248--267.


\bibitem{AlexRadeschi-isometric-flow} M.M. Alexandrino and M. Radeschi, 
\emph{Smoothness of isometric flows on orbit spaces and applications}. 
Transform. Groups \textbf{22} (2017) no. 1,  1–-27.


\bibitem{AlexRadeschi-Molinos-conjecture} M. M. Alexandrino and 
M. Radeschi, \emph{Closure of singular foliations: the proof of Molino’s conjecture}. 
Compositio Math. \textbf{153} (2017) no. 12, 2577--2590.

\bibitem{AlexToeben2} M. M. Alexandrino and D. T\"{o}ben, \textit{Equifocality of singular Riemannian foliations}. 
Proc. Amer. Math. Soc. \textbf{136} (2008) no. 9, 3271-3280.

\bibitem{Duran} J. C \'{A}lvarez Paiva and C. E Dur\'{a}n
\emph{Isometric Submersion of Finsler manifolds}.
Proc. Amer. Math. Soc. \textbf{129} (2001) no. 8, 2409--2417.

\bibitem{NotaViz} B. Alves and  M.~A. Javaloyes,
\emph{A note on the existence of tubular neighbourhoods on Finsler manifolds and minimization of orthogonal geodesics to a submanifold},
Preprint 2017, arXiv1710.01699.


\bibitem{Bao-Robles-Shen}
 D.~Bao, C.~Robles and Z.~Shen, \emph{Zermelo navigation on {R}iemannian
  manifolds}. J. Differential Geom. \textbf{66} (2004)  377--435.
  
  \bibitem{BaChSh00}
 D.~Bao, S.-S. Chern, and Z.~Shen, {\em An Introduction to
  {R}iemann-{F}insler geometry}. Graduate Texts in Mathematics,
  Springer-Verlag, New York, 2000.
  

\bibitem{BiJa11}
 L.~Biliotti and M.~A. Javaloyes, {\em {{$t$}-periodic light rays in
  conformally stationary spacetimes via {F}insler geometry}}. Houston J. Math
  \textbf{37} (2011)  127--146.


\bibitem{CaJaMa}
 E.~Caponio, M.~A. Javaloyes and A.~Masiello, {\em {On the energy
  functional on {F}insler manifolds and applications to stationary
  spacetimes}}. Math. Ann. \textbf{351} (2011)  365--392.

\bibitem{CapJavSan10}
 E.~Caponio, M.~A. Javaloyes and M.~S{\'a}nchez, {\em {On the interplay
  between {L}orentzian causality and {F}insler metrics of {R}anders type}}.
  Rev. Mat. Iberoamericana, \textbf{27} (2011)  919--952.


\bibitem{Deng-Hu} S. Deng and Z. Hu, 
\emph{Curvature of homogenous Randers spaces}. 
 Adv. Math. \textbf{240} (2013) 194--226.


\bibitem{Deng-book} S. Deng, \emph{Homogeneous Finsler Spaces}. 
Springer Monographs in Mathematics, Springer, New York, 2012.


\bibitem{FerusKarcherMunzner} D. Ferus, H. Karcher and H.F.M\"{u}nzner, 
\emph{Cliffordalgebren und neue isoparametrische Hyperfl\"{a}chen.} Math. Z. 
\textbf{177} (1981) no.4,  479--502.

\bibitem{Gromoll-Wallschap-book}
D. Gromoll and G. Walschap, \emph{Metric Foliations and Curvatures}. Progress in Mathematics,
vol. 268 Birkh\"{a}user, 2009




\bibitem{He-Yin-Shen}
He, Qun; Yin, Songting and Shen, Yibin, \emph{Isoparametric Hypersurfaces in Minkowski spaces.}
Differential Geom. Appl. \textbf{47} (2016) 133--158. 



\bibitem{Huang-Mo-geodesics} L. Huang and X. Mo, 
\emph{On geodesics of Finsler metrics via navigation problem}. Proc. Amer.
Math. Soc., \textbf{139} (2011)  3015--3024.


\bibitem{Miguel-Chern-connection}
 M.~A. Javaloyes, \emph{Chern connection of a pseudo-Finsler metric as a
  family of affine connections}. Publ. Math. Debrecen,
\textbf{84} (2014) 29--43.


\bibitem{Javaloyes-Vitorio-Zermelo}
M. A. Javaloyes and H. Vit\'{o}rio, 
\emph{Zermelo navigation in pseudo-Finsler metrics}.
preprint(2014) arXiv:1412.0465 


\bibitem{JavSan14}
 M.~A. Javaloyes and M.~S{\'a}nchez, \emph{On the definition and examples
  of Finsler metrics.} Ann. Scuola Norm. Sup. Pisa Cl. Sci.  \textbf{13} (2014) no. 5,  813-858.

\bibitem{JavSoa15}  M.~A. Javaloyes and B.~Soares, \emph{{G}eodesics and {J}acobi fields of
  pseudo-{F}insler manifolds}.   Publ. Math. Debrecen, \textbf{87} (2015) no.5 



\bibitem{Jozefowicz-Wolak}
M. J\'{o}zefowicz and R. Wolak, \emph{Finsler foliations of compact manifolds are Riemannian}. 
  Differential Geom. Appl. \textbf{26} (2008) 224--226.


\bibitem{Miernowski-Mozgawa}
A. Miernowski and W. Mozgawa,
\emph{Lift of the Finsler foliation to its normal bundle}. 
  Differential Geom. Appl. \textbf{24} (2006) 209--214.



\bibitem{Molino} P. Molino, \textit{Riemannian foliations}, Progress in Mathematics vol. 73, Birkh\"{a}user Boston 1988.



\bibitem{Popescu-Popescu} P. Popescu and M Popescu, 
\emph{Lagrangians adapted to submersions and foliations.}
  Differential Geom. Appl. \textbf{27} (2009) 171--178.

\bibitem{RadeschiClifford} M. Radeschi,  
\emph{Clifford algebrs and new singular riemannian foliations in spheres.}
Geom.  Funct. Anal. (2014) no. 24,  1660--1682.

\bibitem{Robles-geodesics-constant-curvature} C. Robles, 
\emph{Geodesics in Randers Spaces of Constant Curvature.} 
  Transactions of the AMS  \textbf{359} (2007) no. 4, 
pp.1633--1651.


\bibitem{Terng-Thorbergsson-equifocality}
C.L. Terng and G. Thorbergsson, \emph{Submanifold geometry in symmetric spaces.} 
 J. Differential Geom. \textbf{42} (1995)  665-–718. 

\bibitem{Wilking=duality}
B. Wilking, \emph{A duality theorem for Riemannian foliations in nonnegative sectional curvature}.
Geom. Funct. Anal. \textbf{17} (2007) no.4 ,  1297--1320. 




\bibitem{Zhongmin-Shen}
Zhongmin Shen, \emph{Lectures on Finsler Geometry}
World Scientific Co. Singapore, 2001.

\end{thebibliography}

\end{document}